\newtheorem{theorem}{Theorem}[section]
\newtheorem{lemma}[theorem]{Lemma}
\newtheorem{corollary}[theorem]{Corollary}
\newtheorem{conjecture}[theorem]{Conjecture}
\newtheorem{proposition}[theorem]{Proposition}
\theoremstyle{definition}
\newtheorem{definition}[theorem]{Definition}
\theoremstyle{remark}
\newtheorem{remark}[theorem]{Remark}
\numberwithin{equation}{section}
\def\bfa{{\mathbf a}}
\def\bfj{{\mathbf j}}
\def\bfp{{\mathbf p}}
\def\bfx{{\mathbf x}}
\def\bfy{{\mathbf y}}
\def \sU {\mathscr U}
\def\cH{{\mathcal H}}
\def \cH {\mathcal H}
\def \cM {\mathcal M}
\def \cS {\mathcal S}
\def \bzero {\mathbf 0}
\def \btau {\boldsymbol{\tau}}
\def \balp {{\boldsymbol{\alp}}}
\def \bgam {{\boldsymbol{\gam}}}
\def \bdel {{\boldsymbol{\del}}}
\def \bpsi {{\boldsymbol{\psi}}}
\def \bx {\mathbf{x}}
\def \by {\mathbf{y}}
\def \bz {\mathbf{z}}
\def \bj {\mathbf{j}}
\def \ba {\mathbf{a}}
\def \bob {\mathbf{b}}
\def \bop {\mathbf{p}}
\def \bk {\mathbf{k}}
\def \bv {\mathbf{v}}
\def\R{{\mathbb R}}
\def\Z{{\mathbb Z}}
\def \supp {{\mathrm{supp}}}
\def \ds1 {\mathds{1}}
\def\alp{{\alpha}} 
\def\gam{{\gamma}}
\def\del{{\delta}}
\def\le{\leqslant}
\def\Nwmh{{\mathfrak{N}_{w, \cM}(Q, \delta)}}
\def\Nwm{{\mathfrak{N}_{w, \cM}(Q, \bdel)}}
\def\DNwm{{{\mathfrak{N}}^{*, 1}_{w, \cM}(Q^*, \delta^*)}}
\def\DNwms{{{\mathfrak{N}}^{*, s}_{w, \cM}(Q^*, \delta^*)}}
\def\aq{{\left(\frac{\ba}{q}\right)}}
\def\aj{{\left(\frac{\ba}{j_1}\right)}}
\def\ajs{{\left(\frac{\ba}{j_s}\right)}}
\def\kq{{\left(\frac{\bk}{q}\right)}}
\def\kj{{\left(\frac{\bk}{j_1}\right)}}
\def\kjs{{\left(\frac{\bk}{j_s}\right)}}
\def\delp{{ \bdel^{\times}}}
\def\delpr{{ \bdel^{\times}_r}}
\def\delpone{{ \bdel^{\times}_1}}
\def\delps{{ \bdel^{\times}_s}}
\def\lgQ*{{\lceil\frac{\log 4Q^*}{\log 2}\rceil}}
\newcommand{\RH} {\rho_{{\mathrm{RH}}}}
\def\aR{{a(R-1)+R}}
\def\aRR{{(a+1)(R-1)+R}}
\def\aone{{a_1(R-1)+R}}
\def\atwo{{a_2(R-1)+R}}
\def\atwoR{{(a_2+1)(R-1)+R}}
\def\alst{{\alpha_{\textrm{st}}}}
\newcounter{@ToDo}
\newcommand{\todo@helper}[1]{%
	({\color{blue}TODO~\arabic{@ToDo}: {#1\@addpunct{.}}})%
}
\newcommand{\todo}[1]{\stepcounter{@ToDo}%
	\relax\ifmmode\text{\todo@helper{#1}}%
	\else\todo@helper{#1}\fi%
}
\newcounter{@cdo}
\newcommand{\cdo@helper}[1]{%
	({\color{red}CITE~\arabic{@cdo}: {#1\@addpunct{.}}})%
}
\newcommand{\cdo}[1]{\stepcounter{@cdo}%
	\relax\ifmmode\text{\cdo@helper{#1}}%
	\else\cdo@helper{#1}\fi%
}
\author{Rajula Srivastava}
\subjclass[2020]{11D75; 11J13; 11J83; 11K55; 11J25; 11K60; 42B20}
\keywords{Rational points near manifolds, Diophantine approximation on manifolds, oscillatory integrals, Hausdorff dimension.}
\address{Mathematical Institute, University of Bonn, Endenicher Allee 60,
53115, Bonn, Germany, and
\newline Max Planck Institute for Mathematics, Vivatsgasse 7,
53111, Bonn,
Germany.}
\email{rajulas@math.uni-bonn.de}
\begin{document}

\title[Rational Points in Non-Isotropic Neighborhoods]
{Counting Rational Points In Non-Isotropic Neighborhoods of Manifolds}

\begin{abstract}
In this manuscript, we initiate the study of the number of rational points with bounded denominators, contained in a \textit{non-isotropic} $\delta_1\times\ldots\times \delta_R$ neighborhood of a compact submanifold 
$\cM$ of codimension $R$ in $\mathbb{R}^{M}$. We establish an upper bound for this counting function which holds when $\cM$ satisfies a strong curvature condition, first introduced by Schindler-Yamagishi in \cite{schindler2022density}. Further, even in the isotropic case when $\delta_1=\ldots=\delta_R=\delta$, we obtain an asymptotic formula which holds beyond the range of distance to $\cM$ established in \cite{schindler2022density}.
Our result is also a generalization of the work of J.J. Huang \cite{huangduke} for hypersurfaces.

As an application, we establish for the first time an upper bound for the Hausdorff dimension of the set of weighted simultaneously well approximable points on a manifold $\cM$ satisfying the strong curvature condition, which agrees with the lower bound obtained by Allen-Wang in \cite{allen2022note}. Moreover, for $R>1$, we obtain a new upper bound for the number of rational points \textit{on} $\cM$, which goes beyond the bound in an analogue of Serre's dimension growth conjecture for submanifolds of $\mathbb{R}^M$ .

\end{abstract}

\maketitle

\section{Introduction}
\label{sec introduction}
The aim of this manuscript is to establish an asymptotic formula for the number of rational points close to smooth manifolds satisfying a certain geometric condition first studied in \cite{schindler2022density}. In the case when the codimension of such a manifold is greater than one, we establish, for the first time, upper bounds on the number of rational points in a nonisotropic neighborhood of the manifold. Further, even in the isotropic case, we extend the main result of \cite{schindler2022density} (see Theorem \ref{thm SY}), by obtaining an asymptotic formula which counts rational points beyond the range of distance to the manifold established in \cite{schindler2022density}.

Let $\cM$ be a bounded immersed submanifold of $\mathbb{R}^{M}$ with boundary, of dimension $n$ and codimension $R$. For $Q\in \mathbb{Z}_{\geq 1}$ and $\delta\in (0, 1/2)$, we define the counting function
$$N_{\cM}(Q, \delta):=\#\{(\bfp, q)\in \mathbb{Z}^{M+1}: 1\leq q\leq Q, \textrm{ dist}(\cM, \bfp/q)\leq \delta/q\}.$$
Here $\textrm{dist}$ denotes the distance with respect to the $L^\infty$ norm on $\mathbb{R}^M$, that is,
$$\textrm{ dist}(\cM, \bfp/q):=\inf_{\bfx\in \cM} \|\bx-\bfp/q\|_{\infty}.$$
The study of rational points near manifolds has seen rapid development in the recent years. While the problem of obtaining precise asymptotics and upper bounds for $N_{\cM}(Q, \delta)$ is interesting in its own right, it is also closely related to questions in Diophantine approximation and the dimension growth problem for submanifolds of $\mathbb{R}^M$ (see \S \ref{subsec dio app} and Conjecture \ref{conj dgc man} further below). 

We have the trivial upper bound
$$N_{\cM}(Q, \delta)\leq c_{\cM} Q^{n+1},$$
with $c_{\cM}>0$ depending only on $\cM$.
Indeed, if $\cM$ is a (compact piece) of a rational hyperplane in $\mathbb{R}^M$, then the above estimate is the best we can hope for, as there exist constants $c_{\cM}', c_{\cM}$ depending only on $\cM$ so that
$$
    c_{\cM}'Q^{n+1}\leq N_{\cM}(Q, \delta)\leq c_{\cM} Q^{n+1}.
$$
However, if $\cM$ is curved in some sense, a probabilisitic heuristic suggests that 
\begin{equation}
    \label{eq sharp estimate}
    c_{\cM}'\delta^{R}Q^{n+1}\leq N_{\cM}(Q, \delta)\leq c_{\cM} \delta^{R}Q^{n+1}\,,
\end{equation}
for $\delta$ above a critical threshold depending on $Q$ and the codimension of $\cM$.
We are still far from understanding the precise curvature conditions that would be sufficient for the heuristic above to be true, in an appropriate range of $\delta$. However, the class of nondegenerate
manifolds, which arises quite frequently in Diophantine approximation, is a reasonable one to consider. Broadly speaking, a smooth (i.e., $C^{\infty}$) connected submanifold of $\mathbb{R}^M$
is nondegenerate if it is not contained in a proper affine subspace of $\mathbb{R}^M$. 

\begin{definition}
\label{def nondeg}
Let $\mathscr{U}, \mathscr{U}'$ be bounded open subsets of $\mathbb{R}^n$ with $\overline{\mathscr{U}}\subseteq\mathscr{U}'$. 
We say that an $l$-times continuously differentiable map 
$\Phi: \sU' \rightarrow \R^M$ is $l$-nondegenerate at a point 
$\mathbf{x}\in \sU'$ if the partial derivatives of $\Phi$ 
of order up to $l$ at the point $\mathbf{x}$ span $\R^M$. 
The map $\Phi$ is said to be $l$-nondegenerate if it is 
$l$-nondegenerate almost everywhere on $\sU'$ with respect 
to the $n$-dimensional Lebesgue measure.
We say that an immersed manifold $\cM:=\Phi(\mathscr{\overline{U}})$ is 
$l$-nondegenerate if $\Phi$ is $l$-nondegenerate.
\end{definition}

In the celebrated work \cite{Bers12}, Beresnevich established the lower bound in \eqref{eq sharp estimate} for \textit{analytic}, nondegenerate manifolds, in the range
$$\delta>Q^{-\frac{1}{R}}.$$ In the recent work \cite{DRN}, Schindler, Technau and the author proved the indicated lower bound for \textit{smooth}, nondegenerate manifolds  
in the range
$$\delta>Q^{-\frac{3}{2M-1}}.$$

\noindent Huxley, in \cite{huxley1994rational}, was the first to prove a near-optimal upper bound for sufficiently regular planar curves with non-vanishing curvature. This was followed by the remarkable work \cite{vauvel06} of Vaughan and Velani, in which they established the sharp result for such curves under a slightly stronger regularity assumption. A recent breakthrough came in \cite{huangduke}, where J.J. Huang proved an asymptotic for $N_{\cM}
(Q, \delta)$ when $\cM$ is a sufficiently smooth hypersurface with \textit{non-vanishing Gaussian curvature}, in the optimal range
\begin{equation}
    \label{eq hypersurface del}
    \delta>Q^{-1+\epsilon}.
\end{equation}
Further, in \cite{huang2024extremal}, he made the following conjecture for submanifolds $\cM$ of $\mathbb{R}^M$ of arbitrary dimension, satisfying the aforementioned nondegeneracy condition. 
\begin{conjecture}[\cite{huang2024extremal}, Conjecture 3.1]
\label{conj main}
Let $\cM$ be a bounded immersed submanifold of $\mathbb{R}^{M}$ with boundary, of dimension $n$ and codimension $R$. Suppose that $\cM$ is $l$-nondegenerate everywhere with $l\leq R+1$. Then there exists a constant $c_{\cM}>0$ depending only on $\cM$ such that
$$N_{\cM}(Q, \delta)\leq c_{\cM}\delta^R Q^{n+1},$$
when $\delta\geq Q^{-\frac{1}{R}+\epsilon}$ for some $\epsilon>0$ and $Q\to \infty$.
\end{conjecture}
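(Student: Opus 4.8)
The plan is to run a Fourier-analytic (circle-method) argument: smooth the counting function, apply Poisson summation twice, extract a main term of the expected size $\asymp\delta^R Q^{n+1}$, and bound the remaining oscillatory sums using the curvature hypothesis through stationary-phase / van der Corput estimates for oscillatory integrals. Cover $\cM$ by finitely many patches on each of which, after permuting coordinates, $\cM$ is a graph $\{(\bfx,\Phi(\bfx)):\bfx\in\sU\}$ with $\Phi=(\Phi_1,\dots,\Phi_R):\sU\to\R^R$. Up to the implied Lipschitz constants, $N_\cM(Q,\delta)$ is then comparable to the number of $(\bfa,\bfb,q)\in\Z^n\times\Z^R\times\{1,\dots,Q\}$ with $\bfa/q\in\sU$ and $\|q\Phi(\bfa/q)-\bfb\|_\infty\le C\delta$. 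Replacing the indicator of the $\bfb$-condition by a Schwartz bump $w$ and inserting a smooth cutoff $\chi$ adapted to the patch, it suffices to estimate
\[
\Sigma\;:=\;\sum_{q\le Q}\,\sum_{\bfa\in\Z^n}\chi(\bfa/q)\sum_{\bfb\in\Z^R}\prod_{i=1}^{R}w\!\left(\frac{q\Phi_i(\bfa/q)-b_i}{\delta}\right).
\]

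\textbf{Poisson in $\bfb$.} Summing over $\bfb$ and applying Poisson summation converts the inner sum into $\delta^R\sum_{\bfh\in\Z^R}\big(\prod_i\widehat w(\delta h_i)\big)\,e\big(\bfh\cdot q\Phi(\bfa/q)\big)$. The term $\bfh=\bzero$ gives the main term $\delta^R\sum_{q\le Q}\sum_{\bfa}\chi(\bfa/q)=\delta^R\big(\kappa\,Q^{n+1}+O(Q^n)\big)$ for an explicit $\kappa>0$; summed over patches this is of the conjectured order, and in the isotropic case it yields the asymptotic main term with its constant. Frequencies $|\bfh|\gg\delta^{-1-\epsilon}$ are negligible by the rapid decay of $\widehat w$, so it remains to bound, for each $\bzero\ne\bfh$ with $|\bfh|\lesssim\delta^{-1-\epsilon}$, the sum $S(\bfh)=\sum_{q\le Q}\sum_{\bfa}\chi(\bfa/q)\,e\big(q\,\bfh\cdot\Phi(\bfa/q)\big)$.

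\textbf{Poisson in $\bfa$ and stationary phase.} Applying Poisson summation in $\bfa$ and rescaling $\bfx=q\bfy$ expresses $S(\bfh)$ as a sum over $q\le Q$ and dual frequencies $\bfk\in\Z^n$ of $q^n\int_{\R^n}\chi(\bfy)\,e\big(q(\bfh\cdot\Phi(\bfy)-\bfk\cdot\bfy)\big)\,d\bfy$. The phase has gradient $\nabla(\bfh\cdot\Phi)-\bfk$, which confines the non-negligible $\bfk$ to $\ll|\bfh|^{\,n}$ values, and Hessian $\mathrm{Hess}(\bfh\cdot\Phi)=\sum_i h_i\,\mathrm{Hess}\,\Phi_i$. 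This is exactly where curvature enters: $l$-nondegeneracy with $l\le R+1$ guarantees that for every $\bfh\ne\bzero$ the phase $\bfh\cdot\Phi$ is non-flat (some derivative of order $\le R+1$ survives, since the derivatives of $\Phi$ up to order $l$ span $\R^R$), while under the Schindler--Yamagishi strong curvature condition of \cite{schindler2022density} one gets the clean quantitative statement that $\mathrm{Hess}(\bfh\cdot\Phi)$ is non-singular with $\|\mathrm{Hess}(\bfh\cdot\Phi)^{-1}\|\lesssim|\bfh|^{-1}$ outside a small exceptional set, so stationary phase gives the oscillatory integral a bound $\lesssim(q|\bfh|)^{-n/2}$.

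\textbf{Reassembly and the range of $\delta$.} Summing these bounds over $\bfk$, $q$ and $\bfh$ produces an error of schematic shape $O\big(Q^{(n+2)/2}\,\delta^{-O(1)}\big)$, and the claim is that careful bookkeeping makes this $o(\delta^R Q^{n+1})$ precisely when $\delta\ge Q^{-1/R+\epsilon}$ — a threshold with a natural meaning, since there the near-$\cM$ main term $\delta^R Q^{n+1}$ descends to the conjectural dimension-growth count $Q^{n+\epsilon}$ for rational points \emph{on} $\cM$. The honest version of this step is the most delicate part: the pointwise stationary-phase bound is not by itself sharp enough in all ranges, so one must partition the $q$-sum dyadically, isolate the ``resonant'' triples $(\bfh,\bfk,q)$ for which $q\,\bfh\cdot\Phi(\bfa/q)$ is near an integer for an unusually large set of $\bfa$ (the analogue of major arcs) and treat them by a second-moment / large-sieve or Weyl-differencing input, and exploit averaging over $q$. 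The \emph{main obstacle}, and the reason the present paper works under the strong curvature condition rather than bare $l$-nondegeneracy, is doing all of this uniformly in $\bfh$: for a merely $l$-nondegenerate $\cM$ the Hessian $\mathrm{Hess}(\bfh\cdot\Phi)$ may be singular (even identically so along some rays of $\bfh$), stationary points may be degenerate or coalesce, and the oscillatory decay degrades from $(q|\bfh|)^{-n/2}$ toward $(q|\bfh|)^{-1/l}$ in bad directions — handling this needs either a Newton-polyhedron / resolution-of-singularities analysis of the phase uniform in $\bfh$ (together with the sparsity of the bad directions) or an $L^2$-type count insensitive to individual critical points. The non-isotropic refinement, where the $R$ vertical directions are dilated at different rates $\delta_1,\dots,\delta_R$, needs the same stationary-phase input in an anisotropic form.
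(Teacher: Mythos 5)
The statement you are trying to prove is an open conjecture: the paper does not prove it, but only records it (attributing it to Huang) and then establishes it for the much smaller class of manifolds satisfying the strong curvature condition \eqref{eq curv cond}, under which every nonzero linear combination $\sum_i t_i f_i$ has a uniformly nondegenerate Hessian. Your own closing paragraph correctly identifies the fatal obstruction to your approach in the generality of the conjecture: for a merely $l$-nondegenerate $\cM$ the Hessian of $\bfh\cdot\Phi$ may be singular, even identically along whole rays of $\bfh$, so the $(q|\bfh|)^{-n/2}$ stationary-phase bound is simply unavailable, and no uniform-in-$\bfh$ substitute is known. A sketch whose key estimate is conceded to fail on the hypotheses of the statement is not a proof of the statement; at best it is a proof strategy for the special case treated in Theorem \ref{thm SY} and Theorem \ref{thm main}.

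Even restricted to the curvature condition \eqref{eq curv cond}, the ``reassembly and careful bookkeeping'' step hides the actual mathematical content. A single pass of Poisson summation in $\bfb$, Poisson in $\bfa$, and stationary phase, followed by summing the resulting bounds trivially over $\bfh$, $\bfk$, $q$, does \emph{not} reach the range $\delta\geq Q^{-1/R+\epsilon}$ (let alone the wider range of Corollary \ref{cor main asump}): the error so obtained is far too large. The reason the paper (following Huang and Schindler--Yamagishi) succeeds is that after stationary phase the dual sum over $\bfk$ is itself a rational-point count near a \emph{dual family of hypersurfaces} (Propositions \ref{prop dual vdc b} and \ref{prop og vdc b}), and one must feed the current best bound for the original count into the dual count and back again, iterating the self-improving estimate of Proposition \ref{prop 1} on the order of $\log\log Q$ times starting from the trivial bound $Q^{n+1}$. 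Your proposal contains no trace of this bootstrapping, nor of the need for non-isotropic estimates that makes the dyadic summation over the frozen codimension variables close (cf.\ the discussion around \eqref{eq p2 1}). So even in the tractable case your argument has a genuine gap at exactly the point where you wave at ``careful bookkeeping.''
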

The main theorems in \cite{vauvel06, huangduke} demonstrate that non-vanishing Gaussian curvature is sufficient to establish Conjecture \ref{conj main} for hypersurfaces. However, in \cite{srivastava2023density}, Technau and the author showed that the conjecture is also true for certain hypersurfaces with Gaussian curvature vanishing at a single point, provided the ``degree of flatness'' is below a critical value depending only on the dimension of the hypersurface. Further, when the degree of flatness is large, \cite{srivastava2023density} establishes a new asymptotic for $N_{\cM}(Q, \delta)$ incorporating the contribution due to the ``local flatness''.

For smooth, nondegenerate manifolds $\cM$ of arbitrary dimension, 
the current best upper bounds and asymptotics for a smoothened version of $N_{\cM}(Q, \delta)$, in terms of the range of $\delta$, are contained in \cite[Theorem 1.4 and 1.6]{DRN}. We also refer the reader to \cite{BY} for a previous result on upper bounds. 

However, all of these results on upper bounds and asymptotics remain valid only within the range of $\delta$ prescribed by Conjecture \ref{conj main}. It therefore came as a surprise when in \cite{schindler2022density}, Schindler and Yamagishi established an asymptotic for $N_{\cM}(Q, \delta)$ for manifolds $\cM$ satisfying a strong curvature condition,
in a range of $\delta$ which goes beyond Conjecture \ref{conj main} when the codimension is bigger than one! To state their result (and later ours) precisely, we first need some basic set-up. 
 
Recall that $\cM$ is a bounded immersed submanifold of $\mathbb{R}^{M}$ with boundary, of dimension $n$ and codimension $R$.
Since $\cM$ is compact, we can work locally. Using the implicit function theorem, we may assume without loss of generality that $\cM$ has the parametrization
\begin{equation}
    \label{eq manifold}
    \mathcal{M}:=\{(\bx, f_1(\bx), \ldots, f_R(\bx))\in \mathbb{R}^{n+R}: \bx\in \overline{B_{\varepsilon_0}(\bx_0)}\}.
\end{equation}
Here $\bx_0\in \mathbb{R}^n$, $f_r: \mathbb{R}^n\to \mathbb{R}$ are $C^{\infty}$ functions for $1\leq r\leq R$ and $\overline{B_{\varepsilon_0}(\bx_0)}$ denotes the closed ball in $\mathbb{R}^n$ centred at $\bx_0$ and of small enough radius $\varepsilon_0$. 

Further, we assume that $\cM$ satisfies the following 

\textbf{Curvature Condition:} Given any $\mathbf{t}=(t_1, \ldots, t_R)\in \mathbb{R}^R\setminus\{\mathbf{0}\}$, there exists a constant $C_{\mathbf{t}}>0$ such that
\begin{equation}
    \label{eq curv cond}
    \tag{CC}
    \min_{\bx\in \overline{B_{2\varepsilon_0}(\bx_0)}}\left |\det\, H_{\sum_{i=1}^R t_if_i}(\bx)\right|> C_{\mathbf{t}}.
\end{equation}
Note that when $R=1$, condition \eqref{eq curv cond} reduces to $\det H_{f_1} (\mathbf{x}_0) \neq 0$, which in turn is equivalent to non-vanishing Gaussian curvature for hypersurfaces. 
The main result of \cite{schindler2022density} is the following.

\begin{theorem}[\cite{schindler2022density}, Corollary 1.3]
\label{thm SY}
Let $\cM$ be as in \eqref{eq manifold} and let $n\geq 2$. Suppose Condition \eqref{eq curv cond} holds
and that $\varepsilon_0 > 0$ is sufficiently small. Then there exists a constant $c_{\cM} > 0$ depending only on $\cM$ such that
$$
N_{\cM}(Q, \delta) \sim  c_{\cM} \delta^{R} Q^{n +1}
$$
when 
\begin{equation}
    \label{eq SY del range}
    \delta \geq Q^{ - \frac{n}{n + 2(R-1) } + \epsilon}
\end{equation}
for any $\epsilon > 0$ sufficiently small and $Q \rightarrow \infty.$
In particular, Conjecture \ref{conj main} holds in this case.   
\end{theorem}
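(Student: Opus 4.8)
The plan is to prove the asymptotic for a smooth model of $N_\cM(Q,\delta)$, in the spirit of Huang's treatment \cite{huangduke} of hypersurfaces but with \eqref{eq curv cond} as the curvature input, and then to recover the statement for $N_\cM(Q,\delta)$ itself by sandwiching. Working on the local graph \eqref{eq manifold} (a partition of unity reduces to this) and writing a rational point as $\bfp/q=(\bfa/q,\bfb/q)$ with $\bfa\in\Z^n,\bfb\in\Z^R$, the graph structure, the implicit function theorem and the Lipschitz bound for $\bff=(f_1,\dots,f_R)$ let one replace $\mathrm{dist}(\cM,\bfp/q)\le\delta/q$, up to harmless constants, by the $R$ fibred conditions $|q f_r(\bfa/q)-b_r|\lesssim\delta$ with $\bfa/q\in B_{\varepsilon_0}(\bfx_0)$. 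Fixing $w\in C_c^\infty(B_{2\varepsilon_0}(\bfx_0))$ and a smooth bump $\psi$, it then suffices to find an asymptotic for
\[
\Nwmh=\sum_{1\le q\le Q}\ \sum_{\bfa\in\Z^{n}}\ \sum_{\bfb\in\Z^{R}} w(\bfa/q)\prod_{r=1}^{R}\psi\!\Big(\frac{q f_r(\bfa/q)-b_r}{\delta}\Big);
\]
the room in \eqref{eq curv cond} (assumed on $B_{2\varepsilon_0}(\bfx_0)$) absorbs the support of $w$ and the Taylor errors, and the asymptotic for $N_\cM(Q,\delta)$ then follows from that of $\Nwmh$ by interposing smooth minorants and majorants.

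Next I would apply Poisson summation in $\bfb$, obtaining
\[
\Nwmh=\delta^{R}\sum_{\bfk\in\Z^{R}}\Big(\prod_{r=1}^{R}\widehat\psi(\delta k_r)\Big)\ \sum_{q\le Q}\ \sum_{\bfa} w(\bfa/q)\,e\!\big(q\,\bfk\cdot\bff(\bfa/q)\big),
\]
with $e(t)=e^{2\pi i t}$. The frequency $\bfk=\mathbf 0$ contributes $\delta^{R}\widehat\psi(0)^{R}\sum_{q\le Q}\sum_\bfa w(\bfa/q)$, which by comparison with the integral equals $c_\cM\,\delta^{R}Q^{n+1}(1+o(1))$ for an explicit $c_\cM>0$; this is the claimed main term (its normalisation is pinned down when one passes back to $N_\cM$). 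It remains to show the frequencies $\bfk\ne\mathbf 0$ contribute $o(\delta^{R}Q^{n+1})$ in the range \eqref{eq SY del range}.

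The heart of the matter is to bound, for each $\bfk\ne\mathbf 0$, the exponential sum $E(\bfk):=\sum_{q\le Q}\sum_\bfa w(\bfa/q)\,e(q\,\bfk\cdot\bff(\bfa/q))$ and then sum over $\bfk$ after a dyadic decomposition in $K:=|\bfk|$ (the weights $\widehat\psi(\delta k_r)$ restrict to $K\lesssim\delta^{-1}$). Writing $\bfk=K\bft$ with $\bft\in S^{R-1}$, the phase $\Phi(\bfa,q)=q\,(\bfk\cdot\bff)(\bfa/q)$ has, in the $\bfa$-variables, Hessian $q^{-1}H_{\bfk\cdot\bff}(\bfa/q)$; by \eqref{eq curv cond}, the compactness of $S^{R-1}$ and continuity of $\bft\mapsto\min_{\bfx}|\det H_{\bft\cdot\bff}(\bfx)|$ there is a \emph{uniform} $C>0$ with $|\det H_{\bfk\cdot\bff}|\ge C K^{n}$ on $B_{2\varepsilon_0}(\bfx_0)$, and since $\|H_{\bfk\cdot\bff}\|\lesssim K$ there, all its eigenvalues have modulus $\asymp K$: the phase is uniformly non-degenerate at scale $K$. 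I would then apply Poisson summation in $\bfa$ (and extract the oscillation in $q$ by a further Poisson or van der Corput step), writing $E(\bfk)$ as a sum over dual frequencies $(\bfh,m)\in\Z^{n+1}$ of oscillatory integrals $\int\!\!\int w(\bfx/q)\,e\big(\Phi(\bfx,q)-\bfh\cdot\bfx-mq\big)\,d\bfx\,dq$. Frequencies with no stationary point — $\bfh$ outside the radius-$\asymp K$ ball $\nabla(\bfk\cdot\bff)(B_{\varepsilon_0}(\bfx_0))$, or $m$ off the corresponding Legendre value — are negligible by repeated integration by parts; for the $\asymp K^{n}$ remaining frequencies, stationary phase in the $n$ non-degenerate directions gives a gain $K^{-n/2}$ and the leftover radial direction is handled by a one-dimensional integration in $q$, which yields a power of $Q$ moderated by how close the phase is to vanishing along that ray. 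Summing the resulting bound over $q$, over the stationary $(\bfh,m)$ and over $\bfk$ against the decaying weights, the $\bfk\ne\mathbf 0$ contribution comes out $\ll_{\epsilon}\delta^{R}Q^{n+1}Q^{-\theta}$ with $\theta=\theta(n,R,\epsilon)>0$ precisely when $\delta\ge Q^{-n/(n+2(R-1))+\epsilon}$, which is the asserted range. (The hypothesis $n\ge2$ enters here: the $\bfx$-fibres must be lattice-rich and the stationary-phase bookkeeping requires it.)

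The main obstacle will be this last oscillatory estimate, and in particular the fact that $\Phi$ is homogeneous of degree one in $(\bfa,q)$, so its full $(n+1)$-dimensional Hessian is singular — the Euler vector field $(\bfa,q)$ lies in its kernel — which rules out a naive $(n+1)$-dimensional stationary-phase bound and forces one to peel off the radial direction and argue one dimension at a time. Getting the \emph{sharp} power of $Q$ then hinges on a careful count of the ``resonant'' dual frequencies $(\bfh,m)$ for which the phase nearly vanishes along an entire radial segment: these diagonal contributions are what pin down the exponent $n/(n+2(R-1))$, and being cavalier about them costs a power of $\delta$ and shrinks the admissible range. A secondary but genuine difficulty is keeping every implied constant uniform in the direction $\bft\in S^{R-1}$ of $\bfk$, so that the dyadic sum over $K$ converges with room to spare, together with the control of the Taylor and boundary errors from the reduction in the first paragraph.
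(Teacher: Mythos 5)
First, note that the paper does not prove Theorem \ref{thm SY} itself: it is quoted verbatim from \cite{schindler2022density} (Corollary 1.3 there), and the present manuscript only uses it as a benchmark. So your proposal has to be judged against the Schindler--Yamagishi argument and against the machinery this paper builds (which reproves and strengthens the result via Corollary \ref{cor main asump}).

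Your setup is sound and matches the standard route: smoothing (the paper uses Selberg majorants and Fej\'er kernels rather than a bump $\psi$, but that is cosmetic), Poisson in the codimension variables to isolate the main term, the uniform lower bound $|\det H_{\bfk\cdot\bff}|\gg |\bfk|^{n}$ from \eqref{eq curv cond} by homogeneity and compactness, and Poisson plus stationary phase in $\bfa$ only (correctly avoiding the singular $(n+1)$-dimensional Hessian). The genuine gap is at the step you describe as ``summing the resulting bound over $q$, over the stationary $(\bfh,m)$ and over $\bfk$.'' After stationary phase, the sum over $q$ of each dual-frequency term is $\asymp (qK)^{-n/2}q^{n}\min\bigl(Q,\|K F^{*}_{\bfk/K}(\bfh/K)\|^{-1}\bigr)$, and these terms can individually be as large as $Q$; controlling their sum over $\bfh$ is exactly the problem of counting rational points with denominator $K$ near the \emph{dual} Legendre hypersurfaces $F^{*}_{\bfk/K}$. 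There is no pointwise bound to ``sum'': one needs a nontrivial counting input for the dual family. Schindler--Yamagishi supply it by freezing $R-1$ codimension variables, proving the dual hypersurfaces have uniformly nonvanishing Gaussian curvature (a substantive lemma, reflected here in \eqref{eq inv deriv}, \eqref{eq curv est} and Lemma \ref{lem dual phase amp est}), and invoking Huang's sharp hypersurface theorem as a black box; the present paper instead closes the loop with a second B-process (Proposition \ref{prop og vdc b}) and a bootstrap (Proposition \ref{prop 1}) iterated $\sim\log\log Q$ times. Your proposal contains neither mechanism, and you acknowledge in the final paragraph that the ``resonant'' frequencies are what pin down the exponent $n/(n+2(R-1))$ without saying how to count them. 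As written, a single stationary-phase pass with a naive treatment of those frequencies recovers only a bound of roughly the trivial strength for the dual sum and does not reach the claimed range of $\delta$; the missing ingredient is either the dual-curvature lemma plus Huang's theorem, or an iteration scheme of the type developed in Sections \ref{sec overview}--\ref{sec og from dual}.
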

Note that
$$Q^{-\frac{n}{n + 2(R-1)}}<Q^{-\frac{1}{R}},$$
whenever $R>1$. Consequently, Theorem \ref{thm SY} goes beyond the range of $\delta$ hypothesized in Conjecture \ref{conj main} for manifolds satisfying \eqref{eq curv cond} with codimension strictly bigger than one. We refer the reader to \cite[Section 7]{schindler2022density} for examples of such manifolds. For a generalization of Theorem \ref{thm SY} for manifolds satisfying a less restrictive curvature condition, see \cite{munkelt}.

\subsection{Main Results}
Following \cite{huangduke, schindler2022density, srivastava2023density}, we establish our results for a smoothened version of the counting function $N_{\cM}(Q, \delta)$.
The same asymptotic and bounds for $N_{\cM}(Q, \delta)$ then follow by approximating the characteristic function of the ball $\overline{B_{\varepsilon_0}(\bx_0)}$ by smooth weight functions as in \cite[Section 7]{huangduke}. Let $w:\mathbb{R}^{n}\to [0,1]$ be a smooth function with 
$$\supp\,w\subseteq B_{\varepsilon_0}(\bx_0).$$ For $\delta\in \left(0, \frac{1}{2}\right)$, we define
\begin{equation}
    \label{def iso main cfunc}
    \mathfrak{N}_{w, \mathcal{M}}(Q, \delta):=\sum_{\substack{\ba\in \mathbb{Z}^n\\1\leq q\leq Q\\ \|qf_1(\ba/q)\|\leq \delta\\\vdots\\\|qf_R(\ba/q)\|\leq \delta}}w\left(\frac{\ba}{q}\right).
\end{equation}
We also introduce the exponent
\begin{align}
    \label{def theta}
    \Theta:&=\max\left(\frac{n(n+R+1)}{n+2R}, n+1-\frac{nR}{n+2(R-1)-\frac{4}{n}}\right)\nonumber\\
    &=\begin{cases}
       \frac{n(n+R+1)}{n+2R}, & 1\leq R \leq 2.\\
       n+1-\frac{nR}{n+2(R-1)-\frac{4}{n}}, & R\geq 3.
    \end{cases}
\end{align}
Our first result establishes an asymptotic for $\mathfrak{N}_{w, \mathcal{M}}(Q, \delta)$.
\begin{theorem}
\label{thm homog main}
For $n\geq 2$ and a sufficiently small $\varepsilon_0>0$, let $\cM$ be as in \eqref{eq manifold}. Suppose condition \eqref{eq curv cond} holds. Let $\Theta$ be as defined in \eqref{def theta}. Then there exists a constant $C_{w, \mathcal{M}}>0$ (depending only on $w$ and $\mathcal{M}$) such that for all $Q\geq 1$ and $\delta\in (0, 1/2)$, we have
\begin{equation}
\label{eq homog main est}
\mathfrak{N}_{w, \mathcal{M}}(Q, \delta)=\frac{2\hat{w}(\bzero)}{n+1}\delta^R Q^{n+1}+C_{w, \cM}\left(\delta^{R-1} Q^{n+1+\frac{\Theta-(n+1)}{R}}\Tilde{\mathcal{E}}_n(Q)^{\frac{1}{R}}+Q^{\Theta}\mathcal{E}_n(Q)\right),
\end{equation}
where  
\begin{equation}
    \label{eq aux error}
    \mathcal{E}_n(Q)=\Bigg\{\begin{array}{lr}
        \exp(\mathfrak{c}_1\sqrt{\log 4Q}) , & \text{if } n=2, R=1\\
        (\log 4Q)^{\mathfrak{c}_2}, & \text{if } n\geq 3, R=1\\
        \exp\left(\mathfrak{c}_2(\log \log 4Q)^2\right) & \text{if } n\geq 3, R\geq 2
        \end{array},
\end{equation}
and
\begin{equation}
    \label{eq aux error2}
    \Tilde{\mathcal{E}}_n(Q)=\Bigg\{\begin{array}{lr}
        \exp(\mathfrak{c}_1\sqrt{\log 4Q}) , & \text{if } n=2\\
        (\log 4Q)^{\mathfrak{c}_2} 
        & \text{if } n\geq 3
        \end{array},
\end{equation}
for large enough constants $\mathfrak{c}_1, \mathfrak{c}_2>0$ depending only on $w$ and $\mathcal{M}$ 
which can be calculated from the proof.
\end{theorem}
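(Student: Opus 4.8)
The plan is to count rational points via a Fourier-analytic approach, following the circle-method philosophy of \cite{huangduke} and \cite{schindler2022density}, but carefully tracking how the codimension-$R$ structure and the curvature condition \eqref{eq curv cond} enter. First I would dyadically decompose the range of the denominator $q$, writing $q\sim P$ for $P$ a power of two up to $Q$, and for each such scale rewrite the congruence conditions $\|qf_r(\ba/q)\|\le\delta$ using a smooth Fourier expansion: the indicator of $\|t\|\le\delta$ is expanded as a Fourier series whose frequencies $\mathbf{j}=(j_1,\dots,j_R)\in\mathbb{Z}^R$ come with coefficients decaying like $\min(\delta,1/|j_r|)$ in each variable. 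The $\mathbf{j}=\mathbf{0}$ term produces, after Poisson summation in $\ba$ and summation over $q$, the expected main term $\tfrac{2\hat w(\mathbf{0})}{n}\delta^R Q^{n+1}$ (the constant $2/n$ coming from $\sum_{q\le Q}q^n\sim Q^{n+1}/(n+1)$ combined with the Fourier coefficient normalization — this is a routine computation I would not dwell on).

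The heart of the matter is bounding the contribution of the nonzero frequencies. For fixed $q\sim P$ and fixed $\mathbf{j}\neq\mathbf{0}$, applying Poisson summation in $\ba$ converts the sum into a sum over dual frequencies $\bk\in\mathbb{Z}^n$ of an oscillatory integral with phase essentially $q\sum_{r}j_r f_r(\bx/q)-\bk\cdot\bx$, i.e.\ after rescaling $\bx\mapsto q\bx$, a phase of the form $q\big(\sum_r j_r f_r(\bx)-(\bk/q)\cdot\bx\big)$. The curvature condition \eqref{eq curv cond} guarantees that the Hessian of $\sum_r (j_r/|\mathbf{j}|_\infty) f_r$ has determinant bounded below, so stationary phase in all $n$ variables applies with a uniform constant, yielding decay $\big(q\,|\mathbf{j}|_\infty\big)^{-n/2}$ together with localization of $\bk$ to a box of size $\sim q\,|\mathbf{j}|_\infty$ around the gradient image. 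Summing the resulting bound over $\bk$ in that box, over $\mathbf{j}$ against the Fourier coefficients $\prod_r\min(\delta,1/|j_r|)$, and over $q\sim P$, then over dyadic $P\le Q$, is where the exponent $\Theta$ in \eqref{def theta} is forced: the two terms in the max defining $\Theta$ correspond to which regime of $\mathbf{j}$ (small versus large frequencies, equivalently the trade-off between the $\delta$-decay and the stationary-phase decay) dominates, and the crossover happens precisely at $R=2$ versus $R\ge 3$. The auxiliary factors $\mathcal{E}_n(Q)$ and $\tilde{\mathcal{E}}_n(Q)$ arise from the endpoint of the summation over $q$ (or over GCD-type divisor parameters hidden in the rational-point bookkeeping): for $n=2$ one gets a divisor-function blowup controlled by $\exp(\mathfrak{c}_1\sqrt{\log 4Q})$, while for $n\ge3$ a cleaner $\log$-power suffices, and the extra iterated-logarithm loss $\exp(\mathfrak{c}_2(\log\log 4Q)^2)$ for $R\ge2$ comes from summing a divisor-type bound over the $R$ separate moduli $j_1,\dots,j_R$.

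More precisely, I would organize the error analysis by separating the \emph{minor arc} contribution (where either $q$ is large or $\bk$ is large relative to $q|\mathbf{j}|_\infty$, handled by crude integration-by-parts bounds and trivial summation) from the \emph{major arc} contribution (the genuine stationary points), and within the latter isolate the term where $\bk$ is small, which after summing over $q$ reassembles into the ``mixed'' error $\delta^{R-1}Q^{n+1+(\Theta-(n+1))/R}\tilde{\mathcal{E}}_n(Q)^{1/R}$ — the exponent $(\Theta-(n+1))/R$ and the $1/R$-th power on $\tilde{\mathcal{E}}_n$ reflecting that this term is, morally, the $R$-th root of a product of $R$ one-dimensional counts, one per coordinate $f_r$. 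An interpolation/Hölder step across the $R$ congruence conditions is what introduces these $R$-th roots: one estimates the count of $\ba$ satisfying all $R$ conditions by a geometric-mean bound over counts satisfying individual conditions with larger tolerance, then optimizes. The term $Q^\Theta\mathcal{E}_n(Q)$ is the ``pure'' error when all frequencies are at their critical size.

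The main obstacle I anticipate is making the stationary-phase estimates uniform in $\mathbf{j}$ with the constant depending only on $\mathcal{M}$ (not on $\mathbf{t}=\mathbf{j}/|\mathbf{j}|_\infty$): the curvature condition \eqref{eq curv cond} gives a lower bound $C_{\mathbf t}$ for each $\mathbf t$ but a priori this could degenerate as $\mathbf t$ ranges over the sphere. One must either invoke a compactness argument to get a uniform $C=\inf_{|\mathbf t|=1}C_{\mathbf t}>0$ (using continuity of the min-determinant in $\mathbf t$, which holds since the determinant is a polynomial in the $t_i$ with continuous coefficients and the min over a compact $\bx$-ball is continuous), or track the $\mathbf t$-dependence explicitly through the higher derivatives of $\Phi$ that control the stationary-phase error terms. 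A secondary difficulty is bookkeeping the $q$-summation with the correct divisor-function / gcd losses — in particular handling the points where $\gcd(\ba,q)>1$ or where the rescaled dual lattice $(\bk/q)$ collides with a stationary point for many $q$ simultaneously — which is exactly where $\mathcal{E}_n$ and $\tilde{\mathcal{E}}_n$ get their shape, and which requires a Weyl-sum / large-sieve type input rather than pointwise bounds. Once these two pieces are in place, assembling \eqref{eq homog main est} is a matter of collecting the dyadic sums and optimizing the split between minor and major contributions.
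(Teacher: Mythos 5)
Your outline stops one crucial step short of where the real work lies: a \emph{single} pass of Fourier expansion, Poisson summation and stationary phase does not produce the exponent $\Theta$. After stationary phase, the sum over the dual frequencies $\bk$ localized near the stationary points is controlled by quantities of the form $\min\big(\|j_1F^*_{\bj}(\bk/j_1)\|^{-1},Q\big)$, i.e.\ by a \emph{new} counting problem for rational points near the Legendre-dual family of hypersurfaces. Summing this trivially over the box of size $\sim q|\mathbf{j}|_\infty$, as you propose, just returns (essentially) the trivial bound and lands nowhere near $Q^{\Theta}$. The paper's proof is an iteration: Propositions \ref{prop dual from og} and \ref{prop og from dual} convert a bound with exponent $\beta$ into one with exponent $\tilde\beta$ via the recursion \eqref{eq beta rel}, and one must apply this self-improvement roughly $\log\log Q$ times (for $n\ge 3$), or $\sqrt{\log Q}$ times when $n=2$, to converge to $\Theta$ — with the iteration halting early for $R\ge3$ exactly when the stationary-phase remainder term takes over, which is what produces the second branch in \eqref{def theta}. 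This also means your explanation of $\mathcal{E}_n$ and $\tilde{\mathcal{E}}_n$ is wrong: they are not divisor-function or large-sieve losses, but the accumulation of a constant (resp.\ a power of $\log Q$) per iteration over $\sqrt{\log Q}$ (resp.\ $\log\log Q$) iterations, which is why they have the shapes $\exp(\mathfrak{c}_1\sqrt{\log 4Q})$ and $\exp(\mathfrak{c}_2(\log\log 4Q)^2)$.

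Two further structural points. First, the proof of Theorem \ref{thm homog main} in the paper is not self-contained: it takes the already-proven upper bound of Theorem \ref{thm main} as input (via Lemma \ref{lem 1 lower bd}) to control the dual counting function, and then performs one last duality step in which the degree $X$ of the Selberg approximation is left as a \emph{free} parameter (decoupled from $\delta$) and optimized at $X=Q^{\frac{n+2}{n+2R}}$. The mixed error term $\delta^{R-1}Q^{n+1+\frac{\Theta-(n+1)}{R}}\tilde{\mathcal{E}}_n(Q)^{1/R}$ is precisely the zero-frequency defect $\delta^{R-1}X^{-1}Q^{n+1}$ of that approximation at this choice of $X$ — it does not come from a H\"older/geometric-mean interpolation across the $R$ congruence conditions as you suggest. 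Second, the uniformity in $\mathbf{t}=\mathbf{j}/|\mathbf{j}|_\infty$ that you flag as a difficulty is handled by fiat in the setup: the paper assumes the quantitative two-sided bound \eqref{eq curv est} on $B_{2\varepsilon_0}(\bx_0)$ for all $\mathbf{t}\in[-2,2]^{R-1}$ (obtainable from \eqref{eq curv cond} by the compactness argument you sketch), and the genuinely delicate uniformity issue is rather that the derivative bounds for the \emph{dual} phases $F^*_{\bj}$ and dual amplitudes $w^*_{\bj}|\det H_{F_{\bj}}|^{-1/2}$ must be uniform in $\bj$ (Lemma \ref{lem dual phase amp est}), which is needed to run the reverse direction of the duality that your argument omits entirely.
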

By approximating the characteristic function of the ball $\overline{B_{\varepsilon_0}(\bx_0)}$ by smooth weight functions
from above and below using standard arguments, we obtain the following asymptotic for  ${N}_{\mathcal{M}}(Q, \delta)$. For $R>1$, this is valid in a range of $\delta$ beyond 
Theorem \ref{thm SY}, and in particular, the one in Conjecture \ref{conj main}.

\begin{corollary}
\label{cor main asump}
For $n\geq 2$ and a sufficiently small $\varepsilon_0>0$, let $\cM$ be as in \eqref{eq manifold}. Suppose condition \eqref{eq curv cond} holds. Let $\Theta$ be as defined in \eqref{def theta}. Then there exists a constant $c_{\mathcal{M}}>0$ (depending only on  $\mathcal{M}$) such that
\begin{equation}
\label{eq main asymp}
{N}_{\mathcal{M}}(Q, \delta)\sim c_{\mathcal{M}}\delta^R Q^{n+1} \end{equation}
whenever
\begin{equation}
    \label{eq delta range}
    \delta \geq Q^{\frac{\Theta-(n+1)}{R}+\epsilon}= \max\left(Q^{-\frac{n+2}{n+2R}+\epsilon}, Q^{-\frac{n}{n+2(R-1)-\frac{4}{n}}+\epsilon}\right)
\end{equation} 
for any sufficiently small $\epsilon>0$ and $Q\rightarrow \infty$.
\end{corollary}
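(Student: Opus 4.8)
The plan is to deduce Corollary~\ref{cor main asump} from Theorem~\ref{thm homog main} by the sandwiching procedure of Huang~\cite[\S7]{huangduke}; the one genuinely new point is to check that both error terms in \eqref{eq homog main est} are $o(\delta^R Q^{n+1})$ throughout the range \eqref{eq delta range}. First I would localise: by compactness of $\cM$ together with the implicit function theorem, cover $\cM$ by finitely many patches of the form \eqref{eq manifold}, and fix a subordinate smooth partition of unity. A standard geometric comparison (as in \cite[\S7]{huangduke}) shows that, up to a lower-order discrepancy set coming from the overlaps and the boundaries of the patches, a rational point $\bfp/q=(\by,z_1,\dots,z_R)$ with $\dist(\cM,\bfp/q)\le\delta/q$ has its first $n$ coordinates $\by$ within $O(\delta/q)$ of the base ball of some patch and satisfies $\max_r|z_r-f_r(\by)|\le C\,\delta/q$ for a constant $C=C(\cM)\ge 1$; multiplying by $q$ and using $qz_r=p_{n+r}\in\mathbb Z$ turns this into $\|qf_r(\ba/q)\|\le C\delta$ for $\ba:=q\by\in\mathbb Z^n$, and conversely an integer point with $\ba/q$ in the base ball and each $\|qf_r(\ba/q)\|$ below a comparable multiple of $\delta$ is counted by $N_{\cM}(Q,\delta)$ after rounding $qf_r(\ba/q)$ to a nearest integer. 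Dominating $\ds1_{\overline{B_{\varepsilon_0}(\bx_0)}}$ from above and below by smooth weights $w^{\pm}$ with $\int(w^+-w^-)$ as small as desired, and absorbing the $O(\delta/q)$ boundary layer, one sandwiches $N_{\cM}(Q,\delta)$ between finite sums of counts $\mathfrak N_{w^{\mp},\cM}\big(Q,(1\mp\eta)\delta\big)$ plus negligible terms.

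Second, I would apply Theorem~\ref{thm homog main} to each weight appearing in the sandwich. The main term contributes $\tfrac2n\,\widehat{w^{\pm}}(\bzero)\,\big((1\pm\eta)\delta\big)^R Q^{n+1}$; summing over the partition of unity and then letting $w^{\pm}\to\ds1_{\overline{B_{\varepsilon_0}(\bx_0)}}$ and $\eta\to0$ after $Q\to\infty$, the coefficients produced by the upper and lower sandwich coincide (this uses only that the main term is homogeneous of degree $R$ in $\delta$) and define the constant $c_{\cM}>0$, depending only on $\cM$. It then remains to see that the errors are negligible. By \eqref{eq homog main est} the total error is $O\big(\delta^{R-1}Q^{n+1+\frac{\Theta-(n+1)}{R}}\,\tilde{\mathcal E}_n(Q)^{1/R}+Q^{\Theta}\mathcal E_n(Q)\big)$; dividing by $\delta^R Q^{n+1}$, negligibility of the first summand is equivalent to $\delta\gg Q^{\frac{\Theta-(n+1)}{R}}\tilde{\mathcal E}_n(Q)^{1/R}$ and of the second to $\delta\gg Q^{\frac{\Theta-(n+1)}{R}}\mathcal E_n(Q)^{1/R}$. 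Since by \eqref{eq aux error} and \eqref{eq aux error2} the factors $\mathcal E_n(Q)$ and $\tilde{\mathcal E}_n(Q)$ grow more slowly than any fixed positive power of $Q$, both are implied by $\delta\ge Q^{\frac{\Theta-(n+1)}{R}+\epsilon}$ with $Q$ large.

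Finally, a direct computation from \eqref{def theta} gives $\Theta-(n+1)=-R\,\tfrac{n+2}{n+2R}$ when $1\le R\le 2$ and $\Theta-(n+1)=-\tfrac{nR}{n+2(R-1)-4/n}$ when $R\ge 3$, hence in all cases $\tfrac{\Theta-(n+1)}{R}=\max\!\big(-\tfrac{n+2}{n+2R},\,-\tfrac{n}{n+2(R-1)-4/n}\big)$; since $Q^{\max(x,y)}=\max(Q^x,Q^y)$ for $Q\ge1$, this reproduces exactly the exponent in \eqref{eq delta range}, completing the deduction. I expect the only real difficulty to be in the localisation step: one must make the geometric comparison between the $L^\infty$-distance to the graph and the system $\{\|qf_r(\ba/q)\|\le\delta\}$ sharp enough to pin down the exact constant $c_{\cM}$, rather than merely the correct order of magnitude — which is precisely why the weight-approximation and the $\varepsilon_0$-shrinking are carried out only after the limit $Q\to\infty$, following \cite[\S7]{huangduke}. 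Everything else is routine bookkeeping, the analytic substance being already contained in Theorem~\ref{thm homog main}.
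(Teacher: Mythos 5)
Your proposal is correct and follows essentially the same route the paper takes: the paper itself deduces the corollary from Theorem \ref{thm homog main} by the standard smooth-weight sandwiching of $\ds1_{\overline{B_{\varepsilon_0}(\bx_0)}}$ as in \cite[Section 7]{huangduke}, leaving to the reader exactly the two computations you carry out — that $\delta^{-1}Q^{\frac{\Theta-(n+1)}{R}}\Tilde{\mathcal{E}}_n(Q)^{1/R}$ and $\delta^{-R}Q^{\Theta-(n+1)}\mathcal{E}_n(Q)$ are $o(1)$ in the range \eqref{eq delta range} because $\mathcal{E}_n,\Tilde{\mathcal{E}}_n$ are sub-polynomial, and that $\frac{\Theta-(n+1)}{R}=\max\bigl(-\frac{n+2}{n+2R},-\frac{n}{n+2(R-1)-4/n}\bigr)$. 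Your closing caveat about making the comparison between the $L^\infty$-distance condition and the system $\{\|qf_r(\ba/q)\|\le\delta\}$ sharp enough to pin down $c_{\cM}$ (rather than a fixed factor $C^R$ discrepancy) is exactly the point the paper also delegates to the cited standard argument, so nothing is missing relative to the paper's own proof.
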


When $\delta=0$, the term $\mathfrak{N}_{w, \cM}(Q, 0)$ counts the weighted number of rational points with denominator bounded by $Q$ lying \textit{on} the manifold $\cM$. Conjecture \ref{conj main} would imply 
\begin{equation}
\label{serrdim}
N_{\cM}(Q, 0)\ll N_{\cM}(Q, Q^{-\frac{1}{R}+\epsilon})\ll Q^{n + \epsilon R}
\end{equation}
for any $\epsilon > 0$ sufficiently small, whenever $\cM$ is an $n$-dimensional bounded immersed submanifold of $\mathbb{R}^{M}$ of codimension $R$, which is $l$-nondegenerate with $l\leq R+1$.
One can consider the above as an analogue of Serre's dimension growth conjecture, but for submanifolds of $\mathbb{R}^{M}$ of dimension $n$. The original formulation for irreducible projective varieties is stated below.

\begin{conjecture}[Dimension Growth Conjecture]\label{conj dgc}
Let $X \subseteq \mathbb{P}_{\mathbb{Q}}^{M-1}$ be an irreducible projective variety of degree at least two defined over $\mathbb{Q}$. Let $N_X(B)$ be the number of rational points on $X$ of naive height bounded by $B$. Then
$$N_X(B)\ll_X B^{\dim X} (\log B)^c$$
for some constant $c > 0$.
\end{conjecture}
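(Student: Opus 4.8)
The plan is to prove the Dimension Growth Conjecture (Conjecture \ref{conj dgc}) by the \emph{determinant method} of Bombieri--Pila and Heath-Brown, in the global form later developed by Browning--Heath-Brown--Salberger and by Salberger.

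\textbf{Step 1: Reduction to an affine hypersurface.} Write $n=\dim X$. Choosing a generic linear projection $\mathbb{P}^{M-1}\dashrightarrow\mathbb{P}^{n+1}$, which is finite onto its image, of degree at most $d=\deg X$, and which distorts heights by bounded factors, one reduces to the case where $X$ is a hypersurface in $\mathbb{P}^{n+1}$. Passing to the affine cone and clearing denominators, it then suffices to bound $\#\{\mathbf{x}\in\mathbb{Z}^{n+2}:\ \|\mathbf{x}\|_\infty\le B,\ F(\mathbf{x})=0\}$ for an irreducible form $F$ of degree $d\ge2$; one decomposes the box $\|\mathbf{x}\|_\infty\le B$ dyadically into $O((\log B)^{n+2})$ subboxes and works on each separately.

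\textbf{Step 2: Auxiliary hypersurfaces and induction on dimension.} On a fixed subbox $\mathcal{B}$, the heart of the method is the statement that all integral points of $\{F=0\}$ in $\mathcal{B}$ lie on a single auxiliary hypersurface $\{G=0\}$ with $\deg G\ll_{d,n}1$ and $F\nmid G$. One proves this either by the real-variable argument (Taylor-expand $F$ on $\mathcal{B}$, interpolate the points by monomials of bounded degree, and observe that the pertinent Vandermonde-type determinant is an integer of absolute value $<1$, hence zero), or by Salberger's version via reduction modulo a well-chosen prime $p\asymp\log B$. Then $X\cap\{G=0\}$ has dimension $n-1$ and degree $\ll_{d,n}1$ by Bézout; writing it as a union of boundedly many irreducible subvarieties and invoking the inductive hypothesis in dimension $n-1$, one sums the contributions over the $O((\log B)^{n+2})$ subboxes to recover $N_X(B)\ll B^{\dim X}(\log B)^c$. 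The base case $n=1$ is the Bombieri--Pila/Heath-Brown estimate $N_X(B)\ll_{d,\epsilon}B^{2/d+\epsilon}$, which is $\ll B(\log B)^c$ for $d\ge3$, while plane conics are instead handled by their explicit $\mathbb{P}^1$-parametrization.

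\textbf{Step 3: Linear components and the low-degree endgame (the main obstacle).} The recursion must be controlled at the bottom: components of $X\cap\{G=0\}$ of degree one are linear spaces of dimension $k\le n-1$, contributing $\asymp B^{k+1}\le B^{n}$ points each, and must be counted directly; their accumulation across the $O((\log B)^{n+2})$ subboxes and the $O(n)$ levels of recursion is what accounts for the power of $\log B$. The genuinely hard part is the endgame in small degree. For $d=2$ the determinant method does not produce an auxiliary hypersurface of degree small enough to beat the trivial bound, and quadrics must instead be treated by their rational parametrization / the circle method, which gives $N_X(B)\asymp B^{\dim X}$ as soon as $X$ has a smooth rational point. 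For $d=3$ one needs the sharper $p$-adic determinant arguments of Heath-Brown and of Browning--Heath-Brown (several auxiliary primes, together with a dedicated analysis of cubic surfaces and threefolds). For $d\ge4$, Salberger's theorem in fact removes the logarithm entirely, yielding $N_X(B)\ll B^{\dim X}$; so the factor $(\log B)^c$ in Conjecture \ref{conj dgc} is genuinely needed only in low degree.
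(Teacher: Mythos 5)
The statement you are addressing is not proved in the paper: Conjecture \ref{conj dgc} is Serre's dimension growth conjecture, quoted only to motivate the manifold analogue (Conjecture \ref{conj dgc man}), and the paper explicitly records that the best known general result (Salberger) establishes it only with $B^{\epsilon}$ in place of $(\log B)^c$. There is therefore no proof in the paper to compare yours against, and a complete proof of the statement as written would be a substantial new theorem, not an exercise.

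Judged on its own, your proposal is a fair survey of the determinant-method strategy but it is not a proof, and your Step 3 effectively concedes this. Two concrete gaps. First, the endgame: you defer the low-degree cases to external results of Heath-Brown, Browning--Heath-Brown and Salberger, but those results do not all deliver the bound $B^{\dim X}(\log B)^{c}$ in the form claimed; for degree $3$ the known general bounds still carry a genuine $B^{\epsilon}$ loss, so in that range you are invoking the very statement you set out to prove. Second, even in the inductive step the assertion that all integral points of $\{F=0\}$ in a subbox lie on one auxiliary hypersurface $\{G=0\}$ of bounded degree with $F\nmid G$ holds only when the subbox is small compared with $B$ (side length $\ll B^{1-\eta}$ with $\eta$ depending on $d$ and $n$), and the exponent bookkeeping across the subboxes and the levels of the recursion --- which decides whether one lands at $B^{\dim X}(\log B)^{c}$, at $B^{\dim X+\epsilon}$, or at something weaker, and how the linear components are absorbed --- is precisely the hard content of Browning--Heath-Brown--Salberger that your sketch passes over. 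If your aim is to situate the conjecture within this paper, the correct move is the paper's own: state it and cite Salberger for the $B^{\epsilon}$ version.
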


In \cite{Sal3}, Salberger established a version of the above conjecture with $B^{\epsilon}$ in place of $(\log B)^c$. We refer the reader to \cite{castryck2020dimension, schindler2022density} and the references therein for an introduction to the topic, and for further refinements. In general, the upper bound in Conjecture \ref{conj dgc} is sharp; for example, if $X$ contains a rational linear divisor. However, by excluding divisors of small degree and imposing stronger conditions (say on the degree), it is possible to obtain a better upper bound; for instance, in \cite{marmon2010density}, this has been achieved for hypersurfaces of degree at least four.

In view of Conjecture \ref{conj main} and the previous discussion, it is reasonable to formulate the following analogue of the dimension growth conjecture for the class of nondegenerate submanifolds of $\mathbb{R}^M$ (see 
Definition \ref{def nondeg}).

\begin{conjecture}[A Dimension Growth Conjecture for Nondegenerate Manifolds]
\label{conj dgc man}
Let $\cM$ be a bounded immersed submanifold of $\mathbb{R}^{M}$ with boundary, of dimension $n$ and codimension $R$. Further, suppose that $\cM$ is $l$-nondegenerate for $l\leq R+1$. Then there exists a constant $c_{\cM}>0$ depending only on $\cM$ such that
$$N_{\cM}(Q, 0)\leq c_{\cM}Q^{n+\epsilon},$$
for some $\epsilon>0$ and all $Q\geq 1$.
\end{conjecture}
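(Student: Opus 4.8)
This final statement is a conjecture, so what follows is a proposed line of attack rather than a complete proof.

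\emph{Reduction to counting near $\cM$.} A rational point lying on $\cM$ lies trivially inside every $\delta/q$-neighborhood of $\cM$, so $N_{\cM}(Q,0)\le N_{\cM}(Q,Q^{-1/R+\epsilon})$ for every $\epsilon>0$; it therefore suffices to establish the upper bound $N_{\cM}(Q,\delta)\ll_{\cM,\epsilon}\delta^R Q^{n+1}$ at $\delta=Q^{-1/R+\epsilon}$, i.e.\ Conjecture \ref{conj main}, for $l$-nondegenerate manifolds with $l\le R+1$. Under the stronger hypothesis \eqref{eq curv cond} this is already contained in Corollary \ref{cor main asump}, since the range \eqref{eq delta range} always includes $\delta=Q^{-1/R+\epsilon}$ for $\epsilon$ small; the genuine content of the conjecture is to replace \eqref{eq curv cond} by the much weaker pointwise nondegeneracy of Definition \ref{def nondeg} (already $\Phi=(x_1,x_2,x_1x_2,x_1^3)$ is $3$-nondegenerate with $R=2$ while \eqref{eq curv cond} fails, since the Hessian determinant of $t_1x_1x_2+t_2x_1^3$ equals $-t_1^2$ and so vanishes identically at $\mathbf{t}=(0,1)$).

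\emph{The harmonic-analytic plan.} I would run the machinery behind Theorem \ref{thm homog main} — Poisson summation in $\ba$, Fourier expansion of the one-dimensional cutoffs encoding the conditions $\|qf_r(\ba/q)\|\le\delta$, and stationary-phase / van der Corput analysis of the resulting oscillatory integrals, whose phase is $q$ times a linear combination $\sum_r m_r f_r$ plus a linear form — but with every use of nondegeneracy of the Hessians $H_{\sum_i t_i f_i}$ replaced by the weaker \emph{finite-type} information furnished by $l\le R+1$. After the graph reduction, $l$-nondegeneracy at $\bx$ says exactly that for each $\mathbf{t}\in\R^R\setminus\{\mathbf0\}$ some partial derivative of $\sum_i t_i f_i$ of order in $[2,R+1]$ fails to vanish at $\bx$; localizing $\overline{B_{\varepsilon_0}(\bx_0)}$ to regions where one such derivative is bounded below yields sublevel-set estimates with a gain of the shape $\lambda^{1/(R+1)}$ and attendant polynomial decay of the oscillatory integrals. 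These are weaker than what \eqref{eq curv cond} gives, but should still beat the target exponent $n+1$ once $\delta\ge Q^{-1/R+\epsilon}$ — at the price of a much smaller range of $\delta$ for a full asymptotic.

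\emph{The main obstacle.} The finite-type estimates are both weaker and far less stable than those coming from \eqref{eq curv cond}: the localizing region on which a fixed derivative dominates and the size of the lower bound depend on $\mathbf{t}$ in a way that is not uniform over the sphere, and — more seriously — the set on which \emph{all} derivatives up to order $R+1$ of some $\sum_i t_i f_i$ degenerate, though of measure zero under the hypothesis, has ``type'' blowing up near it and cannot simply be discarded. I expect this to force a stratification of $\cM$ by curvature type together with a double induction on $n$ and $R$, the degenerate strata being absorbed by applying the inductive hypothesis to lower-dimensional sub-manifolds or to manifolds of smaller codimension. Controlling the number and geometry of these strata uniformly in $Q$, ruling out a positive-measure stratum flat in a fatal direction, and carrying the summation over the dual variables $(m_1,\dots,m_R)$ through the stratification, is where the real difficulty lies. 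A complementary route, available when $\cM$ is real-analytic, bypasses harmonic analysis entirely: a bounded piece of a real-analytic manifold is definable in the o-minimal structure $\R_{\mathrm{an}}$, so the Pila--Wilkie theorem bounds the rational points of height $\le Q$ on the transcendental part of $\cM$ by $\ll_\epsilon Q^\epsilon$, and it remains to bound those on the semialgebraic part; since $\cM$ is nondegenerate, every positive-dimensional semialgebraic subset of $\cM$ has degree $\ge2$ (a linear piece would make $\cM$ degenerate along it), so Salberger's theorem toward the dimension growth conjecture yields $\ll Q^{n+\epsilon}$ on each, provided the family of such subvarieties produced by o-minimality is controlled uniformly. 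The limitation of this route is precisely the non-analytic case of the conjecture: a merely $C^{R+1}$ nondegenerate manifold need not be definable in any o-minimal structure, so Pila--Wilkie does not apply and one is thrown back on the analytic-free argument above.
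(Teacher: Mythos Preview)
This statement is a conjecture that the paper does \emph{not} prove; it is formulated as an analogue of Serre's dimension growth conjecture and left open. The paper's contribution in this direction is Corollary~\ref{cor Serre dim growth}, which beats the $Q^{n+\epsilon}$ target under the much stronger hypothesis~\eqref{eq curv cond}, but says nothing about general $l$-nondegenerate manifolds. So there is no ``paper's own proof'' to compare against, and you are right to frame your answer as a line of attack.

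Your reduction step is exactly what the paper observes around~\eqref{serrdim}: Conjecture~\ref{conj main} at $\delta=Q^{-1/R+\epsilon}$ implies the present conjecture. Your explicit example $\Phi=(x_1,x_2,x_1x_2,x_1^3)$ correctly shows that $3$-nondegeneracy does not imply~\eqref{eq curv cond}, so the gap between the paper's results and the conjecture is genuine.

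The harmonic-analytic plan is a sensible aspiration but, as you yourself flag, the obstacle is serious: the stationary-phase/duality machine of the paper relies on~\eqref{eq curv cond} not just for decay of the oscillatory integrals but for the very definition of the dual family $F_{s,\bj}^*$ via the inverse of $\nabla F_{s,\bj}$, which requires the Hessian to be nonsingular uniformly in the parameter $\mathbf t$. Under mere finite-type hypotheses the Legendre transform degenerates on a $\mathbf t$-dependent set and the whole bootstrapping structure (Propositions~\ref{prop dual vdc b}--\ref{prop og vdc b}) collapses; replacing it by sublevel-set bounds of strength $\lambda^{1/(R+1)}$ is not obviously enough to close the induction, since each pass through the B-process costs a factor $\lambda^{n/2}$ that must be recouped.

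On the Pila--Wilkie route: one imprecision is the claim that ``every positive-dimensional semialgebraic subset of $\cM$ has degree $\ge 2$''. Almost-everywhere nondegeneracy rules out $n$-dimensional affine pieces, but not lower-dimensional ones; a rational line segment inside $\cM$ is perfectly compatible with the hypothesis when $n\ge 2$. This is harmless for the count (such a segment carries $\asymp Q^2\le Q^{n+\epsilon}$ points), but the argument as written does not quite say this. The real difficulty you identify---uniform control over the family of semialgebraic pieces produced by the o-minimal cell decomposition---is the crux, and is open even in this restricted analytic setting.
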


The nondegeneracy condition, which implies that $\cM$ is not contained in a proper affine subspace of $\mathbb{R}^M$, can be considered to be a replacement for the requirement in Conjecture \ref{conj dgc} that the projective variety be irreducible and of degree at least two.

In \cite{schindler2022density}, as an immediate consequence of their main theorem, Schindler and Yamagishi obtained
that 
$$
{N}_{\cM}( Q, 0) \ll Q^{n -\frac{(n-2) (R-1) }{n + 2(R-1)} } (\log Q)^{c}$$
for some constant $c > 0$, whenever the compact manifold $\cM$ satisfies condition \eqref{eq curv cond}. In particular, for  submanifolds of $\mathbb{R}^M$ satisfying this much stronger curvature condition, their estimate broke the $Q^{n}$ barrier in Conjecture \ref{conj dgc man}.

In this paper, as a corollary of Theorem \ref{thm homog main}, we obtain the following improvement over \cite[Corollary 1.4]{schindler2022density},
which goes even further in pushing through the 
barrier in Conjecture \ref{conj dgc man} 
for submanifolds of $\mathbb{R}^M$ satisfying the curvature condition \eqref{eq curv cond}.

\begin{corollary}
    \label{cor Serre dim growth}
    For $n\geq 3$ and a sufficiently small $\varepsilon_0>0$, let $\cM$ be as in \eqref{eq manifold}. Suppose condition \eqref{eq curv cond} holds. Let $\Theta$ be as defined in \eqref{def theta}, and let $\mathcal{E}_n(Q)$ be as in \eqref{eq aux error}. Then
    $${N}_{\cM}(Q, 0)\ll Q^{\Theta} \mathcal{E}_n(Q)$$
    for all $Q\geq 1$, with the implicit depending only on $\cM$.
\end{corollary}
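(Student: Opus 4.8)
The plan is to deduce Corollary~\ref{cor Serre dim growth} from Theorem~\ref{thm homog main} by taking $\delta = 0$ in the smoothened counting function and then passing from the weighted count back to $N_{\cM}(Q,0)$. First I would observe that when $\delta = 0$, the main term $\frac{2\hat w(\bzero)}{n}\delta^R Q^{n+1}$ vanishes (since $R \geq 1$), and likewise the first error term $\delta^{R-1}Q^{n+1+\frac{\Theta-(n+1)}{R}}\Tilde{\mathcal{E}}_n(Q)^{1/R}$ vanishes whenever $R \geq 2$; when $R = 1$ this term reads $Q^{n+1+\Theta-(n+1)}\Tilde{\mathcal{E}}_n(Q) = Q^{\Theta}\Tilde{\mathcal{E}}_n(Q)$, which is of the same shape as the remaining term $Q^{\Theta}\mathcal{E}_n(Q)$ up to the difference between $\mathcal{E}_n$ and $\Tilde{\mathcal{E}}_n$. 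So in all cases the substitution $\delta = 0$ into \eqref{eq homog main est} yields $\mathfrak{N}_{w,\cM}(Q,0) \ll Q^{\Theta}\mathcal{E}_n(Q)$, after checking that for $n \geq 3$ the two auxiliary error factors in \eqref{eq aux error} and \eqref{eq aux error2} are both dominated by the stated $\mathcal{E}_n(Q)$ (for $R = 1$ this is $(\log 4Q)^{\mathfrak{c}_2}$, absorbing $\Tilde{\mathcal{E}}_n$; for $R \geq 2$ it is $\exp(\mathfrak{c}_2(\log\log 4Q)^2)$).

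Next I would pass from the weighted local count to the global count $N_{\cM}(Q,0)$. Since $\cM$ is compact, cover it by finitely many coordinate patches of the form \eqref{eq manifold}, each equipped with a smooth weight $w$ with $\supp w \subseteq B_{\varepsilon_0}(\bx_0)$ and with the collection of $w$'s forming (or being bounded below by a positive constant on) a partition of the relevant region; on each patch a rational point $\bfp/q$ lying on $\cM$ corresponds to $\ba = (p_1,\dots,p_n)$ with $qf_r(\ba/q) = p_{n+r} \in \bZ$, i.e.\ $\|qf_r(\ba/q)\| = 0$ for all $r$, so each such point is counted with weight $\geq c > 0$ in the corresponding $\mathfrak{N}_{w,\cM}(Q,0)$. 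Summing over the finitely many patches gives $N_{\cM}(Q,0) \ll \sum_{\text{patches}} \mathfrak{N}_{w,\cM}(Q,0) \ll Q^{\Theta}\mathcal{E}_n(Q)$, with the implied constant depending only on $\cM$ (the number of patches and the lower bound $c$ depend only on $\cM$). This is exactly the reduction already invoked in the paper right after \eqref{def iso main cfunc}, pointing to \cite[Section 7]{huangduke}, so I would simply cite that argument rather than reproduce it.

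The only genuine point requiring a small amount of care is the bookkeeping of the error terms when $R = 1$: one must confirm that the first error term, which does \emph{not} vanish at $\delta = 0$ in codimension one, is $\ll Q^{\Theta}\mathcal{E}_n(Q)$ — and indeed at $\delta = 0$, $R = 1$ it equals $Q^{\Theta}\Tilde{\mathcal{E}}_n(Q)$ with $\Tilde{\mathcal{E}}_n(Q) = (\log 4Q)^{\mathfrak c_2}$ for $n \geq 3$, which matches $\mathcal{E}_n(Q) = (\log 4Q)^{\mathfrak c_2}$ up to enlarging $\mathfrak c_2$, so this causes no difficulty. (The restriction $n \geq 3$ in the corollary is presumably to keep $\mathcal{E}_n$ polylogarithmic rather than $\exp(\mathfrak c_1\sqrt{\log 4Q})$; one could state an analogous $n = 2$ bound but with a worse error factor.) I do not anticipate a substantive obstacle here: the corollary is a direct specialization of Theorem~\ref{thm homog main}, and the whole proof amounts to setting $\delta = 0$, noting which terms die, and invoking the standard patch-and-partition-of-unity passage from $\mathfrak{N}_{w,\cM}$ to $N_{\cM}$.
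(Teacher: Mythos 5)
Your proposal is correct and is essentially the paper's own (implicit) argument: the paper presents this corollary as an immediate consequence of Theorem \ref{thm homog main}, obtained exactly by letting $\delta\to 0$ in \eqref{eq homog main est} (so the main term and, for $R\geq 2$, the first error term vanish, while for $R=1$ that term becomes $Q^{\Theta}\Tilde{\mathcal{E}}_n(Q)$, which is absorbed into $Q^{\Theta}\mathcal{E}_n(Q)$ when $n\geq 3$) and then passing from $\mathfrak{N}_{w,\cM}$ to $N_{\cM}$ by the standard weight-majorization argument from \cite[Section 7]{huangduke}. The only cosmetic point is that Theorem \ref{thm homog main} is stated for $\delta\in(0,1/2)$, so one should formally use monotonicity of the counting function and take $\delta\to 0^+$ rather than substituting $\delta=0$ directly; this changes nothing.
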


We now come to our main estimate, which is an upper bound for the number of rational points with bounded denominators contained in a non-isotropic neighborhood of a smooth manifold $\cM$ satisfying \eqref{eq curv cond}. It specializes to the upper bound in Theorem \ref{thm homog main} for isotropic neighborhoods of $\cM$. For $Q\in \mathbb{Z}_{\geq 1}$ and $\bdel=(\delta_1, \ldots, \delta_R)\in (0, 1/2)^R$, we define the counting function
$$N_{\cM}(Q, \bdel):=\#\left\{(\bfa, q)\in \mathbb{Z}^{n+1}: 1\leq q\leq Q,\, \left\|qf_r(\ba/q)\right\|\leq \delta_r/q \text{ for }1\leq r\leq R\right\},$$
and its smoothened version
\begin{equation}
    \label{def main cfunc}
    \mathfrak{N}_{w, \mathcal{M}}(Q, \bdel):=\sum_{\substack{\ba\in \mathbb{Z}^n\\1\leq q\leq Q\\ \|qf_1(\ba/q)\|\leq \delta_1\\\vdots\\\|qf_R(\ba/q)\|\leq \delta_R}}w\left(\frac{\ba}{q}\right).
\end{equation}
When $\delta_1=\delta_2\ldots=\delta_R=\delta$, we shall simply refer the above as $N_{\cM}(Q, \delta)$ and $\mathfrak{N}_{w, \mathcal{M}}(Q, \delta)$ respectively, so as to be consistent with the notations for the corresponding isotropic counting functions.

We again have the trivial upper bound
$$N_{\cM}(Q, \bdel)\leq c_{\cM} Q^{n+1}.$$
However, using a probabilistic heuristic, we expect that
$$c_{\cM}'\left(\prod_{r=1}^R \delta_r\right) Q^{n+1}\leq N_{\cM}(Q, \bdel)\leq c_{\cM} \left(\prod_{r=1}^R \delta_r\right)Q^{n+1}\,,$$
now for each $\delta_r$ (with $1\leq r\leq R$) above some critical threshold depending on $Q$ and the codimension $R$.
Let
\begin{equation}
    \label{def delp}
    \delp:= \prod_{r=1}^R\delta_r;
\end{equation}
and for $1\leq r\leq R$, set
\begin{equation}
    \label{def delpr}
    \delpr:= \prod_{\substack{1\leq s\leq R\\s\neq r}}\delta_s=\frac{\delp}{\delta_r}\,.
\end{equation}

Our main theorem is an upper bound for  $\mathfrak{N}_{w, \mathcal{M}}(Q, \bdel)$, which 
is the first ever non-trivial estimate for rational point count in a non-isotropic neigborhood of a submanifold of $\mathbb{R}^{M}$. 

\begin{theorem}[Main Theorem]
\label{thm main}
For $n\geq 2$ and a sufficiently small $\varepsilon_0>0$, let $\cM$ be as in \eqref{eq manifold}. Suppose condition \eqref{eq curv cond} holds. Let $\Theta$ be as defined in \eqref{def theta}. Then 
for all $Q\geq 1$ and $\bdel\in (0, 1/2)^R$, we have
\begin{align}
\label{eq main est}
\mathfrak{N}_{w, \mathcal{M}}(Q, \bdel)\ll \delp Q^{n+1} + \sum_{r=1}^R\delpr Q^{n+1+\frac{\Theta-(n+1)}{R}}\Tilde{\mathcal{E}}_n(Q)^{\frac{1}{R}}+Q^{\Theta}\mathcal{E}_n(Q),     \end{align}
where $\mathcal{E}_n(Q)$ and $\Tilde{\mathcal{E}}_n(Q)$ are as defined in \eqref{eq aux error} and \eqref{eq aux error2} respectively. The 
implicit constant depends only on $w$ and $\mathcal{M}$.
\end{theorem}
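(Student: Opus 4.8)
The plan is to run the Fourier-analytic circle-method argument that underlies the isotropic Theorem~\ref{thm homog main} (and, before it, the work of Huang~\cite{huangduke} and Schindler--Yamagishi~\cite{schindler2022density}), but carrying the $R$ widths $\delta_1,\dots,\delta_R$ separately instead of collapsing them to a single $\delta$. After localizing via the parametrization~\eqref{eq manifold}, I would decompose $1\le q\le Q$ into $O(\log Q)$ dyadic blocks $q\in(Q_0,2Q_0]$, bound each block, and sum --- the logarithmic loss being absorbed into the factors $\mathcal{E}_n(Q),\widetilde{\mathcal{E}}_n(Q)$ of~\eqref{eq aux error},~\eqref{eq aux error2}. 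For each $r$ fix a smooth $1$-periodic majorant $\psi_{\delta_r}\ge\mathbf{1}_{[-\delta_r,\delta_r]+\mathbb{Z}}$ with $\widehat{\psi_{\delta_r}}(0)\ll\delta_r$ and rapid decay past frequency $\delta_r^{-1}$, so that
\[
\mathfrak{N}_{w,\cM}(Q_0,\bdel)\ \le\ \sum_{Q_0<q\le 2Q_0}\ \sum_{\bft\in\mathbb{Z}^R}\Big(\prod_{r=1}^R\widehat{\psi_{\delta_r}}(t_r)\Big)\sum_{\bfa\in\mathbb{Z}^n}w(\bfa/q)\,e\Big(q\sum_{r=1}^R t_r f_r(\bfa/q)\Big).
\]
Poisson summation in $\bfa$ then turns the inner sum, up to negligible tails, into $q^n\sum_{\bfk\in\mathbb{Z}^n}I_q(\bft,\bfk)$, where $I_q(\bft,\bfk)=\int_{\mathbb{R}^n}w(\bfx)\,e\big(q(\sum_r t_rf_r(\bfx)-\bfk\cdot\bfx)\big)\,d\bfx$.

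First I would isolate the main term: the term $(\bft,\bfk)=(\bzero,\bzero)$ contributes $\sum_q q^n\widehat{w}(\bzero)\prod_r\widehat{\psi_{\delta_r}}(0)\ll\delp\,Q^{n+1}$, the first summand of~\eqref{eq main est}, while $\bft=\bzero,\ \bfk\ne\bzero$ is negligible by non-stationary phase since $w$ is smooth and compactly supported. For $\bft\ne\bzero$ the phase of $I_q(\bft,\bfk)$ has Hessian $qH_{\sum_r t_rf_r}(\bfx)$, and~\eqref{eq curv cond} together with homogeneity of the determinant in $\bft$ and compactness of $S^{R-1}$ gives $\big|\det H_{\sum_r t_rf_r}(\bfx)\big|\gg|\bft|^n$ uniformly on $\overline{B_{2\varepsilon_0}(\bfx_0)}$, the higher derivatives being $O(|\bft|)$. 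Stationary phase then yields $|I_q(\bft,\bfk)|\ll(q|\bft|)^{-n/2}$ when the critical-point equation $\nabla(\sum_r t_rf_r)(\bfx)=\bfk$ is solvable in $\supp w$ --- which, by the Hessian lower bound, happens for at most $O(|\bft|^n)$ frequencies $\bfk$ --- and $I_q(\bft,\bfk)$ is super-polynomially small otherwise.

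It then remains to sum the $\bft\ne\bzero$ contribution, which is where the bookkeeping of the $\delta_r$'s takes place. I would split according to the support $S=\{r:t_r\ne0\}\ne\emptyset$: on each index $r\notin S$ the Fourier coefficient is exactly $\widehat{\psi_{\delta_r}}(0)\ll\delta_r$, which pulls out the factor $\prod_{r\notin S}\delta_r$, and on the indices in $S$ the weights $\widehat{\psi_{\delta_r}}(t_r)$ --- summed against the oscillatory bounds above and over the dyadic ranges of $q$ and $|\bft|$ --- reduce, for fixed $S$, to the estimates already carried out in the isotropic case. When $|S|=1$, say $S=\{r\}$, the prefactor is $\prod_{s\ne r}\delta_s=\delpr$, and after the optimization one obtains precisely $\delpr\,Q^{\,n+1+\frac{\Theta-(n+1)}{R}}\,\widetilde{\mathcal{E}}_n(Q)^{1/R}$; summing over $r\in\{1,\dots,R\}$ gives the middle term of~\eqref{eq main est}. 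Terms with $2\le|S|\le R-1$ still carry at least one factor $\delta_r$ and are dominated by the $|S|=1$ terms, while the terms with $S=\{1,\dots,R\}$ carry no $\delta_r$ and are bounded by $\ll Q^\Theta\mathcal{E}_n(Q)$ --- the same $\delta$-free quantity that yields Corollary~\ref{cor Serre dim growth} in the limit $\bdel\to\bzero$. Specializing $\delta_1=\dots=\delta_R=\delta$ then recovers the upper bound half of~\eqref{eq homog main est}.

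The hard part will be the oscillatory-integral analysis and the ensuing optimization in the second step: one must bound, uniformly in $\bft$, the number of frequencies $\bfk$ for which $I_q(\bft,\bfk)$ is not negligible --- a lattice-point/divisor-type count that is the source of the subexponential ($n=2$) and logarithmic ($n\ge3$) factors in~\eqref{eq aux error},~\eqref{eq aux error2} --- and then balance the $(q|\bft|)^{-n/2}$ decay against that count over the dyadic ranges of $q$. It is precisely this two-regime balancing (a ``large $q$'' regime exploiting the full oscillatory decay against a complementary regime handled by a second-moment estimate) that produces the two competing quantities in the definition~\eqref{def theta} of $\Theta$. By contrast, once this machinery is in place, the passage from the single width $\delta$ to the vector $\bdel$ is essentially the bookkeeping described above.
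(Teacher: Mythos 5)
Your setup (Selberg/Fej\'er-type majorants, Poisson summation in $\bfa$, stationary phase using \eqref{eq curv cond}, isolating the zero frequency to get $\delp Q^{n+1}$, and splitting the nonzero frequencies according to which coordinates of $\bft$ dominate) matches the paper's first moves. But there is a genuine gap at the step you defer as ``the hard part'': a single pass of stationary phase followed by a count of the non-negligible frequencies $\bfk$ and an optimization over dyadic ranges of $q$ and $|\bft|$ does \emph{not} produce the exponent $\Theta$. After stationary phase, the sum over $\bfk$ of the main terms is not a lattice-point count you can bound trivially by $O(|\bft|^n)$ and then balance; it is (up to the Jacobian weight) a counting function for rational points $\bfk/j_s$ lying near the \emph{dual} family of hypersurfaces $F^*_{s,\bj}$, weighted by $\min(\|j_s F^*_{s,\bj}(\bfk/j_s)\|^{-1},Q)$. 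Bounding that trivially recovers only the first improvement over the trivial estimate, namely $\beta_1=n+1-\frac{nR}{n+2(R-\frac{n}{n+2})}$, which is strictly larger than $\Theta$. The paper's actual proof is a bootstrap: Proposition \ref{prop 1} converts a bound with exponent $\beta$ into one with exponent $\tilde\beta=n+1-\frac{nR}{n+2(R-\frac{n}{2\beta-n})}$ by passing to the dual problem and back (Propositions \ref{prop dual from og} and \ref{prop og from dual}, each built on a B-process step), and this is iterated roughly $\log\log Q$ times (about $\sqrt{\log Q}$ times when $n=2$) until the exponents converge to $\frac{n(n+R+1)}{n+2R}$ or, for $R\geq 3$, until the stationary-phase error term $(Q^*)^{\frac{n}{2}+R-1}D^{\frac{n}{2}-1}$ takes over, which is what forces the second branch of \eqref{def theta}. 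Without this iteration there is no mechanism in your argument that reaches $\Theta$.

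Two consequences of this for your other claims. First, the factors $\mathcal{E}_n(Q)$ and $\widetilde{\mathcal{E}}_n(Q)$ do not come from a divisor-type count of frequencies; they are the accumulated logarithmic losses over the $\approx\log\log Q$ (resp.\ $\approx\sqrt{\log Q}$) iterations, which is why they have the unusual shapes $\exp(\mathfrak{c}(\log\log 4Q)^2)$ and $\exp(\mathfrak{c}\sqrt{\log 4Q})$. Second, and this is why the theorem is stated non-isotropically in the first place, the non-isotropic bookkeeping is not a decoration added after the isotropic argument: the dual-to-original step requires as input a bound on $\mathfrak{N}_{w,\cM}(D,2^{i_1}/Q^*,\dots,2^{i_R}/Q^*)$ with \emph{independent} dyadic widths in each codimension (see \eqref{eq p2 1}), so the induction hypothesis must itself be the non-isotropic estimate \eqref{eq p1 hypo}; the middle term $\delpr Q^{n+1+\frac{\Theta-(n+1)}{R}}\widetilde{\mathcal{E}}_n(Q)^{1/R}$ is generated inside the iteration (from the small-$\delta_s$ case handled by monotonicity in Proposition \ref{prop og from dual}), not by pulling out $\prod_{s\neq r}\delta_s$ from the frequency sum and quoting an isotropic bound.
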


The following upper bound for $N_{\cM}(Q, \bdel)$ is a direct corollary of the above result.
\begin{corollary}
\label{cor hetero}
For $n\geq 2$ and a sufficiently small $\varepsilon_0>0$, let $\cM$ be as in \eqref{eq manifold}. Suppose condition \eqref{eq curv cond} holds. Let $\Theta$ be as defined in \eqref{def theta}. Then there exists a constant $c_{\mathcal{M}}>0$ (depending only on  $\mathcal{M}$) such that
\begin{equation}
\label{eq hetero asymp}
{N}_{\mathcal{M}}(Q, \bdel)\leq c_{ \mathcal{M}}\delp Q^{n+1}    
\end{equation}
whenever
\begin{equation}
    \label{eq delta hetero range}
    \min_{1\leq r\leq R}\delta_r\geq Q^{\frac{\Theta-(n+1)}{R}+\epsilon}= \max\left(Q^{-\frac{n+2}{n+2R}+\epsilon}, Q^{-\frac{n}{n+2(R-1)-\frac{4}{n}}+\epsilon}\right),
\end{equation} 
for any sufficiently small $\epsilon>0$ and $Q\rightarrow \infty$.
\end{corollary}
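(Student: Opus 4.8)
The plan is to derive Corollary~\ref{cor hetero} directly from the Main Theorem (Theorem~\ref{thm main}). There are really only two things to do: (i) pass from the sharp-cutoff counting function $N_{\cM}(Q,\bdel)$ to a finite sum of its smoothened versions $\mathfrak{N}_{w,\cM}(Q,\cdot)$, and (ii) check that, under the hypothesis \eqref{eq delta hetero range}, the main term $\delp Q^{n+1}$ in \eqref{eq main est} dominates both error terms.

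\textbf{Step (i): reduction to $\mathfrak{N}_{w,\cM}$.} I would argue exactly as in \cite[Section 7]{huangduke}. Since $\cM$ is compact, cover it by finitely many coordinate patches, each parametrized as in \eqref{eq manifold} over a ball $\overline{B_{\varepsilon_0/2}(\bx_0)}$ on whose double \eqref{eq curv cond} holds, and on each patch pick a smooth $w$ with $0\le w\le 1$, $\supp w\subseteq B_{\varepsilon_0}(\bx_0)$ and $w\equiv 1$ on $\overline{B_{\varepsilon_0/2}(\bx_0)}$. Every pair $(\bfp,q)$ contributing to $N_{\cM}(Q,\bdel)$ has $\bfp/q$ inside the prescribed non-isotropic neighborhood of some patch; translating to that chart and replacing the $L^\infty$-distance by the coordinate oscillations $\|qf_r(\ba/q)\|$ costs at most an absolute factor $C=C(\cM)\ge 1$ in each $\delta_r$. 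Hence
$$
N_{\cM}(Q,\bdel)\ \ll_{\cM}\ \max_{w}\ \mathfrak{N}_{w,\cM}\big(Q,(C\delta_1,\dots,C\delta_R)\big),
$$
the maximum running over the finitely many weights just constructed.

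\textbf{Step (ii): the error terms are lower order.} Insert \eqref{eq main est} into the right-hand side and use $\delpr=\delp/\delta_r$. The $r$-th summand of the middle term equals $\delp Q^{n+1}\big(\delta_r^{-1}Q^{(\Theta-(n+1))/R}\tilde{\mathcal{E}}_n(Q)^{1/R}\big)$; by \eqref{eq delta hetero range} one has $\delta_r^{-1}Q^{(\Theta-(n+1))/R}\le Q^{-\epsilon}$, and since $\tilde{\mathcal{E}}_n(Q)$ from \eqref{eq aux error2} grows slower than any positive power of $Q$, the bracket tends to $0$; so this term is $o(\delp Q^{n+1})$. For the last term, $\delp\ge(\min_r\delta_r)^R\ge Q^{\Theta-(n+1)+R\epsilon}$, whence
$$
Q^{\Theta}\mathcal{E}_n(Q)\le \delp\, Q^{n+1}\cdot Q^{-R\epsilon}\mathcal{E}_n(Q)=o(\delp\, Q^{n+1}),
$$
because $\mathcal{E}_n(Q)$ from \eqref{eq aux error} is likewise sub-polynomial. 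Combining with Step (i) and $\prod_r(C\delta_r)=C^R\delp$ yields $N_{\cM}(Q,\bdel)\le c_{\cM}\delp Q^{n+1}$ for all large $Q$, which is \eqref{eq hetero asymp}. The displayed identity in \eqref{eq delta hetero range} is just the elementary computation that $\Theta-(n+1)=-R(n+2)/(n+2R)$ when $1\le R\le 2$ and $\Theta-(n+1)=-nR/(n+2(R-1)-4/n)$ when $R\ge 3$, divided by $R$.

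\textbf{Main obstacle.} The substantive difficulty is not in this corollary at all, but in Theorem~\ref{thm main} itself. Here the only point requiring care is that the auxiliary factors $\mathcal{E}_n(Q)$ and $\tilde{\mathcal{E}}_n(Q)$ are sub-polynomial, which is precisely what lets the arbitrarily small power $Q^{\epsilon}$ in \eqref{eq delta hetero range} absorb them; beyond that, the deduction is routine bookkeeping of exponents together with the standard smooth-majorization argument of \cite[Section 7]{huangduke}.
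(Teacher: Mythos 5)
Your proposal is correct and matches the paper's intended argument: the paper presents Corollary \ref{cor hetero} as a direct consequence of Theorem \ref{thm main} obtained by majorizing the indicator of $\overline{B_{\varepsilon_0}(\bx_0)}$ by a smooth weight as in \cite[Section 7]{huangduke} and then checking that, under \eqref{eq delta hetero range}, both error terms in \eqref{eq main est} are $o(\delp Q^{n+1})$ because $\mathcal{E}_n(Q)$ and $\Tilde{\mathcal{E}}_n(Q)$ are sub-polynomial. Your exponent bookkeeping in Step (ii), including the identity $\frac{\Theta-(n+1)}{R}=\max\left(-\frac{n+2}{n+2R},-\frac{n}{n+2(R-1)-\frac{4}{n}}\right)$, is exactly the verification required.
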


\subsection{Applications to Diophantine approximation}
\label{subsec dio app}
Next, we discuss applications of Theorem \ref{thm main} to Diophantine approximation on the manifold $\cM$. To do so, we need to introduce a definition and some notations. 
\begin{definition}
    \label{def psi approx}
    Given a family $\bpsi=(\psi_0, \psi_1, \ldots, \psi_R)$ of monotonic functions $\psi_r:(0,+\infty)\to(0,1)$ with $0\leq r\leq R$,  we call a point $\bfy\in\R^{n+R}$ {\em $\bpsi$-approximable} if the conditions
\begin{align}\label{psi-app}
 &\left|y_i-\frac{a_i}{q}\right|<\frac{\psi_0(q)}{q},\qquad 1\leq i\leq n,\\
&\left|y_{i}-\frac{a_{i}}{q}\right|<\frac{\psi_{i-n}(q)}{q},\qquad n+1\leq i\leq n+R, 
\end{align}
hold for infinitely many $(q, \bfa)=(q, a_1, \ldots, a_{n+R})\in\mathbb{Z}_{\geq 0}\times\Z^{M}$. 
\end{definition}
We shall denote the set of $\bpsi$-approximable points in $\R^{n+R}=\R^{M}$ by $\cS_{n+R}(\bpsi)$. For $\btau\in \mathbb{R}_{>0}^{1+R}$, given the approximation function family $\bpsi_{\btau}=\left(q^{-\tau_0}, q^{-\tau_1}, \ldots, q^{-\tau_R}\right)$, we shall abbreviate notation and just write $\cS_{n+R}(\btau):=\cS_{n+R}(\bpsi_{\btau})$.
We shall also call this the set of $\btau$-approximable points, or the set of $\btau$-weighted simultaneously well approximable points. By Dirichlet's theorem \cite{Schmidt-1980}, $\cS_{n+R}(1/n, 1/n, \ldots, 1/n)=\R^{M}$.

The weighted simultaneous approximation result below deals with the convergence case of Khintchine's theorem \cite{khintchine1926metrischen}. In the case when $\psi_0=\psi_1\ldots=\psi_R$, it complements the divergence case for \emph{analytic} nondegenerate submanifolds of $\R^{M}$ in \cite[Theorem~2.5]{Bers12}. 

\begin{theorem}
    \label{thm hausd well app}
    For $n\geq 2$ and a sufficiently small $\varepsilon_0>0$, let $\cM$ be as in \eqref{eq manifold}. Suppose condition \eqref{eq curv cond} holds. 
    Let $s>\frac{nR}{R+1}$ and let $\bpsi=(\psi_0, \psi_1, \ldots, \psi_R)$ be a family of monotonic approximation functions $\psi_r:(0,+\infty)\to(0,1)$ for $0\leq r\leq R$, with
    \begin{equation}
        \label{assump apprx func}
        \psi_0\leq \min \{\psi_1, \ldots, \psi_R\},
    \end{equation}
    and
    \begin{equation}
        \label{eq Hmeas gset conv}
        \sum_{q=1}^{\infty}q^{n}\left(\frac{\psi_0(q)}{q}\right)^{s}\prod_{r=1}^R\psi_r(q)<\infty.
    \end{equation}
    Then
    \begin{equation}
        \label{eq conc Hmeas zero}
        \mathcal{H}^s(\mathcal{S}_{n+R}(\bpsi)\cap \mathcal{M})=0.
    \end{equation}
\end{theorem}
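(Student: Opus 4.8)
The plan is to deduce the Hausdorff measure statement from the rational‐point count in Theorem~\ref{thm main} via a Borel--Cantelli argument on a covering of $\mathcal{S}_{n+R}(\bpsi)\cap\mathcal{M}$ by boxes centred at rational points near $\mathcal{M}$. First I would reduce to the local parametrisation \eqref{eq manifold}: a point $\bfy\in\mathcal{S}_{n+R}(\bpsi)\cap\mathcal{M}$ is of the form $\bfy=(\bx,f_1(\bx),\dots,f_R(\bx))$ with $\bx\in\overline{B_{\varepsilon_0}(\bx_0)}$, and being $\bpsi$-approximable forces, for infinitely many $(q,\bfa)$, the first $n$ coordinates $\bx$ to lie within $\psi_0(q)/q$ of $\ba/q$ and each $f_r(\bx)$ within $\psi_r(q)/q$ of $a_{n+r}/q$. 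Combining these, $\|qf_r(\ba/q)\|\le \psi_r(q)+O(q\cdot \mathrm{Lip}(f_r)\cdot \psi_0(q)/q)\ll \psi_r(q)$, using \eqref{assump apprx func} to absorb the error (here the hypothesis $\psi_0\le\min_r\psi_r$ is exactly what makes the displacement in the base contribute a term of the same order as $\psi_r$). Thus each such $(q,\ba)$ is counted by $\mathfrak{N}_{w,\cM}(Q,\bdel)$ with $\delta_r\asymp\psi_r(q)$, $\delta_0\asymp\psi_0(q)$, after inserting a bump function $w$ that is $\equiv 1$ on a slightly smaller ball.

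Next I would build the cover. Dyadically decompose $q\in(Q,2Q]$. For fixed such $Q$, every $\bpsi$-approximable $\bx$ in the base lies in the set $B_{\ba/q}$ of $\bx$ with $|\bx-\ba/q|<\psi_0(q)/q$ for some admissible $(q,\ba)$; the corresponding point on $\mathcal{M}$ lies in a box of sidelengths $\asymp \psi_0(q)/q$ in the first $n$ directions and, since the $f_r$ are Lipschitz and the normal directions are pinned to within $\psi_r(q)/q$, sidelength $\asymp\max\{\psi_r(q),\psi_0(q)\}/q=\psi_r(q)/q$ in the $r$-th normal direction. Each such box can be covered by $O\big(\prod_{r}(\psi_r(q)/\psi_0(q))\big)$ balls of radius $\psi_0(q)/q$ (splitting the long normal directions into cubes of the short sidelength), so each admissible $(q,\ba)$ contributes $\prod_r(\psi_r/\psi_0)$ balls of radius $\psi_0(q)/q$ to an $s$-sum of weight $\big(\psi_0(q)/q\big)^s\prod_r(\psi_r(q)/\psi_0(q))$. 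Summing over $(q,\ba)$ with $q\in(Q,2Q]$, the number of admissible pairs is $\le \mathfrak{N}_{w,\cM}(2Q,\bdel)$ with $\delta_r\asymp\psi_r(Q)$ (using monotonicity to freeze the $\psi_r$ at the dyadic endpoint, up to constants), so the $s$-dimensional Hausdorff sum over the $Q$-block is
\[
\Sigma_Q \;\ll\; \mathfrak{N}_{w,\cM}(2Q,\bdel)\cdot\Big(\tfrac{\psi_0(Q)}{Q}\Big)^s\prod_{r=1}^R\frac{\psi_r(Q)}{\psi_0(Q)}.
\]
Feeding in the main estimate \eqref{eq main est} with $\delta_r\asymp\psi_r(Q)$ and $\delp\asymp\prod_r\psi_r(Q)$, the leading ``expected'' term $\delp Q^{n+1}$ produces, after multiplying by $\big(\psi_0(Q)/Q\big)^s\prod_r\psi_r(Q)/\psi_0(Q)$, exactly $Q^{n+1-s}\psi_0(Q)^{\,s-?}\dots$; simplifying, it gives $Q^{n}\big(\psi_0(Q)/Q\big)^{s}\,Q\prod_r\psi_r(Q)\cdot\prod_r(\psi_r/\psi_0)^{?}$ — the point is that the main term contributes, after the dyadic sum $Q=2^k$ is compared to the integral, a convergent series precisely under hypothesis \eqref{eq Hmeas gset conv} (the condition $\sum q^n(\psi_0(q)/q)^s\prod_r\psi_r(q)<\infty$ is the one obtained when each normal box of length $\psi_r/q$ is cut into $\psi_r/\psi_0$ pieces of size $\psi_0/q$, giving weight $(\psi_0/q)^s$ each). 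For the error terms of \eqref{eq main est}, I would use the constraint $s>\frac{nR}{R+1}$ together with the range in which the error terms are dominated to check that $\sum_Q \Sigma_Q^{\mathrm{error}}$ converges as well; this is where the hypotheses interlock. By Borel--Cantelli / the definition of Hausdorff measure, $\sum_Q\Sigma_Q<\infty$ forces $\mathcal H^s$ of the $\limsup$ set to vanish, i.e. \eqref{eq conc Hmeas zero}.

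The main obstacle I anticipate is twofold. First, bookkeeping the non-isotropic covering correctly: one must choose the covering radius ($\psi_0(q)/q$, the smallest scale) and count how many such balls are needed to cover the long, thin box coming from a single rational point, and this is exactly where the weighted convergence sum in \eqref{eq Hmeas gset conv} — with its asymmetric roles for $\psi_0$ and the $\psi_r$ — has to emerge naturally rather than being imposed. Using a coarser covering (radius $\max_r\psi_r/q$) would be wasteful and would not give the sharp $s$ matching the Allen--Wang lower bound. Second, handling the error terms $\sum_r\delpr Q^{\,n+1+\frac{\Theta-(n+1)}{R}}\tilde{\mathcal E}_n(Q)^{1/R}$ and $Q^{\Theta}\mathcal E_n(Q)$ from Theorem~\ref{thm main}: these are only acceptable because of the restriction $s>\frac{nR}{R+1}$, and verifying that for such $s$ the induced $s$-sums converge for \emph{all} admissible approximation families (not just power functions) requires a careful monotonicity/dyadic-summation argument, possibly splitting into ranges of $Q$ according to which term of \eqref{eq main est} dominates, and using that $\Theta<n+1$ so the error exponents are genuinely below the trivial one. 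A clean way to organise this is to first prove the power-function case $\bpsi=\bpsi_{\btau}$ (where the computation is explicit and ties directly to the Hausdorff-dimension corollary matching \cite{allen2022note}), and then reduce the general monotonic case to it by a standard comparison of the convergence sum with a geometric/dyadic series.
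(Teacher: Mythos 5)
Your overall skeleton (reduce to the chart, absorb the base displacement via the Lipschitz bound and $\psi_0\le\psi_r$, count admissible $(q,\bfa)$ dyadically via the main counting theorem, finish with a Hausdorff--Cantelli argument) is the same as the paper's, but your covering step contains a genuine error. For a fixed admissible $(q,\bfa,\bfb)$ the set to be covered is
\[
\sigma=\Bigl\{(\bx,f_1(\bx),\dots,f_R(\bx)):\ \bigl|\bx-\tfrac{\bfa}{q}\bigr|\le \tfrac{\psi_0(q)}{q},\ \bigl|f_r(\bx)-\tfrac{b_r}{q}\bigr|\le\tfrac{\psi_r(q)}{q}\ (1\le r\le R)\Bigr\},
\]
and this is \emph{not} a long thin box of normal sidelength $\psi_r(q)/q$: the first condition already pins $\bx$ to a ball of radius $\psi_0(q)/q$, and since each $f_r$ is Lipschitz the point on the graph then moves by at most $O(\psi_0(q)/q)$ in every normal direction as well. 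Hence $\sigma$ lies in a \emph{single} ball of radius $O(\psi_0(q)/q)$ and contributes $(\psi_0(q)/q)^s$ to the $s$-sum; the constraints $|f_r(\bx)-b_r/q|\le\psi_r(q)/q$ only restrict which rational points are admissible (and therefore enter through the counting function with $\delta_r\asymp\psi_r$), not the diameter of the covered set. Your subdivision into $\prod_r(\psi_r/\psi_0)$ balls per rational point inserts exactly that spurious factor into $\Sigma_Q$, giving $\sum_q q^n(\psi_0(q)/q)^s\prod_r\psi_r(q)\cdot\prod_r(\psi_r(q)/\psi_0(q))$, which is \emph{not} controlled by \eqref{eq Hmeas gset conv} (for $\bpsi_{\btau}$ with $\tau_0>\tau_r$ the extra factor is a positive power of $q$). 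So your claim that the main term converges ``precisely under hypothesis \eqref{eq Hmeas gset conv}'' is false, and your argument as written proves only a weaker statement that would not match the Allen--Wang lower bound.

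A secondary issue is that you leave the error terms of \eqref{eq main est} as a promissory note. The paper does not sum them at all: it first replaces each $\psi_r(q)$ by $\max\bigl(\psi_r(q),\,q^{-\frac{n+2}{n+2R}+\eta},\,q^{-\frac{n}{n+2(R-1)-4/n}+\eta}\bigr)$, which only enlarges the limsup set and places one in the range \eqref{eq delta hetero range} where Corollary \ref{cor hetero} gives the clean bound $\mathfrak{N}_{w,\cM}(Q,\bdel)\ll Q^{n+1}\prod_r\psi_r(Q)$; the hypothesis $s>\frac{nR}{R+1}$ is precisely what keeps \eqref{eq Hmeas gset conv} convergent after this enlargement. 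Making that reduction explicit is both simpler and sharper than trying to sum the error terms directly for arbitrary monotone $\bpsi$.
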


Recall that the $n$- dimensional Hausdorff measure $\cH^n$ is a constant multiple of the Lebesgue measure $\mu_n$. Thus, setting $s=n$ in the above theorem establishes the convergence case of Khintchine's theorem for $\cM$. This is a weighted extension of \cite[Theorem 1.2]{BY} (also see \cite{DRN}) in the specific setting of smooth manifolds satisfying the strong condition \eqref{eq curv cond}. Further, Theorem \ref{thm hausd well app} generalizes the convergence case of \cite[Theorem 2]{beresnevich2007note} for weighted simultaneous Diophantine approximation on planar nondegenerate curves when $\psi_0\leq \psi_1$. It can be checked that for smooth planar curves, nondegeneracy (see Definition \ref{def nondeg})  is equivalent to the curvature condition \eqref{eq curv cond}. We refer to \cite[Proposition 2.13]{BY} for a proof of this fact.

As a corollary of Theorem \ref{thm hausd well app}, we can establish for the first time an upper bound for the set of weighted simultaneously well approximable points on $\cM$ corresponding to $\btau\in \left(\frac{1}{n}, \frac{1}{R}\right)^{R+1}$. In fact, since the complementary lower bound for more general manifolds has already been established in \cite[Theorem 1.1]{allen2022note} (also see \cite[Theorem~8]{ber2021lower} for an earlier result), we obtain the exact Hausdorff dimension of the set of such points.

\begin{corollary}
\label{cor lbound dim}
For $n\geq 2$ and a sufficiently small $\varepsilon_0>0$, let $\cM$ be as in \eqref{eq manifold}. Suppose condition \eqref{eq curv cond} holds. For $\btau=(\tau_0, \tau_1, \ldots, \tau_R)\in [\frac{1}{n}, \frac{1}{R})^{R+1}$ with
$\tau_0\geq \max\{\tau_1, \ldots, \tau_R\}$, we have
\begin{equation}
    \label{eq Hdim}
    \dim (\mathcal{M}\cap\mathcal{S}_{n+R}(\btau))=\frac{n+R+1+\sum_{r=1}^R(\tau_0-\tau_r)}{\tau_0+1}- R.
\end{equation}
\end{corollary}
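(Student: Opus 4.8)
The plan is to deduce Corollary \ref{cor lbound dim} from Theorem \ref{thm hausd well app} using a standard Hausdorff measure/dimension argument, matching the upper bound obtained this way against the lower bound of \cite[Theorem 1.1]{allen2022note}. First I would specialize Theorem \ref{thm hausd well app} to the power-law approximation functions $\bpsi_{\btau}=(q^{-\tau_0},q^{-\tau_1},\ldots,q^{-\tau_R})$, so that $\cS_{n+R}(\bpsi_{\btau})=\cS_{n+R}(\btau)$. The condition $\tau_0\geq\max\{\tau_1,\ldots,\tau_R\}$ translates precisely into the monotonicity hypothesis \eqref{assump apprx func} (larger exponent means smaller function), and each $\psi_r$ maps into $(0,1)$ once $\btau\in(0,\infty)^{R+1}$; monotonicity is immediate. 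Plugging into the convergence series \eqref{eq Hmeas gset conv}, the $q$-th term is $q^{n}\cdot q^{-s(\tau_0+1)}\cdot q^{-\sum_{r=1}^{R}\tau_r}=q^{\,n-s(\tau_0+1)-\sum_{r=1}^{R}\tau_r}$, so the series converges precisely when
\begin{equation*}
s>\frac{n+1-\sum_{r=1}^R\tau_r}{\tau_0+1}.
\end{equation*}
Combined with the hypothesis $s>\tfrac{nR}{R+1}$ of Theorem \ref{thm hausd well app}, we obtain $\cH^{s}(\cM\cap\cS_{n+R}(\btau))=0$ for every such $s$, hence
\begin{equation*}
\dim(\cM\cap\cS_{n+R}(\btau))\leq\max\left(\frac{n+1-\sum_{r=1}^R\tau_r}{\tau_0+1},\ \frac{nR}{R+1}\right).
\end{equation*}

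Next I would check that, in the stated range $\btau\in[\tfrac1n,\tfrac1R)^{R+1}$ with $\tau_0\geq\max_r\tau_r$, the first term dominates and in fact equals the target value in \eqref{eq Hdim}. Note that
\begin{equation*}
\frac{n+1-\sum_{r=1}^R\tau_r}{\tau_0+1}=\frac{n+R+1+\sum_{r=1}^R(\tau_0-\tau_r)-R(\tau_0+1)}{\tau_0+1}=\frac{n+R+1+\sum_{r=1}^R(\tau_0-\tau_r)}{\tau_0+1}-R,
\end{equation*}
which is exactly the right-hand side of \eqref{eq Hdim}. To see this quantity exceeds $\tfrac{nR}{R+1}$, it suffices to handle the worst case $\tau_0=\tau_1=\cdots=\tau_R=\tau$, where the expression becomes $\tfrac{n+1}{\tau+1}-R$; since $\tau<\tfrac1R$ we get $\tfrac{n+1}{\tau+1}-R>\tfrac{n+1}{1/R+1}-R=\tfrac{(n+1)R}{R+1}-R=\tfrac{R(n+1-R-1)}{R+1}=\tfrac{R(n-R)}{R+1}$; one checks $\tfrac{n+1-\sum\tau_r}{\tau_0+1}$ is minimized over the box at this diagonal corner and compares against $\tfrac{nR}{R+1}$, using $n\geq 2$ and $\tau_r\geq\tfrac1n$; a short monotonicity computation in each variable gives the inequality. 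This yields the upper bound $\dim(\cM\cap\cS_{n+R}(\btau))\leq\frac{n+R+1+\sum_{r=1}^R(\tau_0-\tau_r)}{\tau_0+1}-R$.

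For the matching lower bound, I would invoke \cite[Theorem 1.1]{allen2022note}: under the nondegeneracy hypotheses satisfied here (the curvature condition \eqref{eq curv cond} implies the relevant nondegeneracy, cf.\ the discussion after Theorem \ref{thm hausd well app}), that result provides
\begin{equation*}
\dim(\cM\cap\cS_{n+R}(\btau))\geq\frac{n+R+1+\sum_{r=1}^R(\tau_0-\tau_r)}{\tau_0+1}- R
\end{equation*}
in precisely the weighted range $\btau\in[\tfrac1n,\tfrac1R)^{R+1}$ with $\tau_0\geq\max_r\tau_r$ (one should double-check the normalization conventions match, possibly after a harmless reparametrization of the weights). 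Combining the two bounds gives \eqref{eq Hdim}.

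The main obstacle is not any single hard estimate but rather the bookkeeping: verifying that the convergence range coming from Theorem \ref{thm hausd well app} (including the side constraint $s>\tfrac{nR}{R+1}$) does not truncate the dimension below the Allen--Wang lower bound anywhere in the box $[\tfrac1n,\tfrac1R)^{R+1}$, and confirming that the two papers' parametrizations of weighted approximation agree so that the lower bound genuinely matches. I expect the inequality $\frac{n+1-\sum_r\tau_r}{\tau_0+1}\ge\frac{nR}{R+1}$ to follow from an elementary argument — reduce to the diagonal by monotonicity in each $\tau_r$, then use $\tau<1/R$ and $n\ge 2$ — but it is the one place where the hypotheses $n\geq2$ and $\btau\in[\tfrac1n,\tfrac1R)^{R+1}$ must all be used, so it warrants care.
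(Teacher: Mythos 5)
Your proposal is correct and follows essentially the same route as the paper: the upper bound is obtained by applying Theorem \ref{thm hausd well app} to $\bpsi_{\btau}=(q^{-\tau_0},\ldots,q^{-\tau_R})$ and letting $s$ decrease to the target value, and the matching lower bound is quoted from \cite[Theorem 1.1]{allen2022note}. The only caveat is a small algebra slip in your illustrative check of the side constraint $s>\frac{nR}{R+1}$ (on the diagonal the quantity $\frac{n+1-\sum_r\tau_r}{\tau_0+1}$ equals $\frac{n+1-R\tau}{\tau+1}$, not $\frac{n+1}{\tau+1}-R$, and it decreases to exactly $\frac{nR}{R+1}$ as $\tau\uparrow 1/R$, so the constraint is indeed never binding); you are in fact more careful here than the paper, which does not address this point at all.
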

Note that for the much more general class of $l$-nondegenerate manifolds of dimension $n$ and codimension $R$, in the case when $\tau_0=\tau_1=\ldots=\tau_R$, the currently best known result is \cite[Corollary 1.13]{DRN}, where the range of $\tau$ is given by
\begin{equation*}
    \tau<\frac{3\alpha+1}{(2(n+R)-1)\alpha+n+R}, \qquad\text{ with }\alpha:=\frac{1}{n(2l-1)(n+R+1)}.
\end{equation*}
In other words, this result covers $\tau$ close to $1/n$. Corollary \ref{cor lbound dim} is valid for a bigger range of $\tau$, but requires the manifold $\cM$ to satisfy the strong curvature assumption \eqref{eq curv cond}.

\begin{proof}[Proof of Corollary \ref{cor lbound dim}]
    Let $$s>\frac{n+R+1+\sum_{r=1}^R(\tau_0-\tau_r)}{\tau_0+1}- R=\frac{n+R+1}{\tau_0+1}- \sum_{r=1}^R\frac{(\tau_r+1)}{\tau_0+1}$$ and $\bpsi(q)=\left(q^{-\tau_0}, q^{-\tau_1}, \ldots, q^{-\tau_R}\right)$. A straightforward calculation using the lower bound on $s$ shows that \eqref{eq Hmeas gset conv} is convergent. We can thus apply Theorem \ref{thm hausd well app} to conclude that $\cH^s(\cS_{n+R}(\btau)\cap\cM)=0$, and consequently $\dim (\cS_{n+R}(\btau)\cap\cM)\le s$. Since $s>\frac{n+R+1}{\tau_0+1}- \sum_{r=1}^R\frac{(\tau_r+1)}{\tau_0+1}$ is arbitrary we conclude that 
\begin{equation}
    \label{eq pf wswa upb}
     \text{dim}(\mathcal{M}\cap\mathcal{S}_{n+R}(\btau))\leq 
     \frac{n+R+1+\sum_{r=1}^R (\tau_0-\tau_r)}{\tau_0+1}-R.
\end{equation}
To upgrade the above inequality to an equality,  we can use the lower bound provided by \cite[Theorem 1.1]{allen2022note} which is true for any $C^2$ submanifold of $\R^{M}$ of dimension $n$, whenever $\sum_{r=1}^R\tau_r<1$. Applied to our setting, it says that 
\begin{equation}
    \label{eq pf wswa lpb}
   \text{dim}(\mathcal{M}\cap\mathcal{S}_n(\tau))\geq \frac{n+R+1+\sum_{r=1}^R (\tau_0-\tau_r)}{\tau_0+1}-R.
\end{equation}
Now \eqref{eq Hdim} follows by combining \eqref{eq pf wswa upb} and \eqref{eq pf wswa lpb}. 
\end{proof}

\subsection{Novelties and comparison with previous work}
We compare our methods and results with the previous works of J.J. Huang  (for hypersurfaces) and Schindler-Yamagishi (for manifolds satisfying the condition \eqref{eq curv cond}). 

In \cite{huangduke}, Huang used a novel combination of projective duality, stationary phase and induction on scales to develop a bootstrapping argument. Starting with the trivial estimate $N_{\cM}(Q, \delta)\ll Q^{n+1}$, a repeated iteration of this process yielded the conjectured error term (of order $Q^{n-1}$) for the asymptotic expansion of $N_{\cM}(Q, \delta)$ in the case when $\cM$ is a hypersurface with non-vanishing Gaussian curvature. The bedrock of this argument was a self-improving estimate relying on the fact that the Legendre dual of a hypersurface with non-vanishing Gaussian curvature is also a hypersurface with the same property. Furthermore, the Legendre transform is an involution. Thus after every two steps of this iteration, one returns to the original counting problem 
one started with, albeit with better estimates owing to repeated applications of stationary phase and induction on scales. It was not clear, however, whether such an argument could be adapted to manifolds of arbitrary dimension, or what duality would even mean in such a setting.

In \cite{schindler2022density}, a deep insight of Schindler-Yamagishi was generalizing the notion of Legendre duality to manifolds of arbitrary dimension $n$ and codimension $R$, but satisfying the geometric condition \eqref{eq curv cond}. In essence, they exploit the curvature condition \eqref{eq curv cond} to ``freeze'' all but one codimension variables, followed by stationary phase for the family of hypersurfaces thus obtained (one for each discrete choice of the $(R-1)$ frozen variables). This paves the way for the application of van der Corput's B process for each such hypersurface, thus linking the manifold $\cM$ immersed in $\mathbb{R}^{n+R}=\mathbb{R}^{M}$ to a dual family of hypersurfaces in $\mathbb{R}^{M}$. Some major work is then involved in showing that these hypersurfaces also possess non-vanishing curvature. Once this is established, the authors use another application of stationary phase and duality (van der Corput's B process) to return to the counting function for the original manifold $\cM$. 

However, after these two steps,  
the argument proceeds by using exactly one of the codimensions to project to a lower dimensional counting problem associated to a hypersurface in $\mathbb{R}^{n+1}$ and summing trivially in the remaining in $R-1$ codimension variables. This allows for the use of the sharp estimate for the rational point count close to hypersurfaces from \cite{huangduke} as a blackbox to deduce estimates for the rational point counting function associated to this family of projected hypersurfaces. The authors thus apply a similar bootstrapping procedure as in \cite{huangduke}, but with only two steps. This is already sufficient to establish improved estimates for $\mathfrak{N}_{w, \cM}(Q, \delta)$, in a range of $\delta$ beyond Conjecture \ref{conj main}! However, the involutive nature of Legendre duality is not fully exploited. Further, the process of summing up trivially in all but one codimension variables in the second step, does not fully utilize the information in these directions.

In this paper, we establish an inductive argument in the vein of \cite{huangduke} which exploits the duality between the manifold $\cM$ and a dual family of hypersurfaces, as formulated in \cite{schindler2022density}. 
\begin{itemize}
    \item However, instead of using the main result from \cite{huangduke} for hypersurfaces as a blackbox, we use the involutive nature of the Legendre transform to return to the original counting problem associated to $\cM$ after every two steps (see Proposition \ref{prop 1}). This allows us to develop a bootstrapping argument which takes as input a trivial estimate for $\mathfrak{N}_{w, \cM}(Q, \bdel)$, and eventually yields the upper bound contained in Theorem \ref{thm main}, which is valid in a bigger range of $\delta$ than in both Conjecture \ref{conj main} and Theorem \ref{thm SY}.
    \item Moreover, in a major departure from \cite{huangduke}, the two inductive substeps (see Propositions \ref{prop dual from og} and \ref{prop og from dual}) develop a connection between counting functions associated with two entirely different geometric objects: the manifold $\cM$ of codimension $R$ immersed in $\mathbb{R}^{M}$ on one hand, and a dual family of hypersurfaces in $R^{M}$ on the other. 
    As mentioned previously, in \cite{huangduke}, both counting functions were associated to hypersurfaces with non-vanishing curvature, whereas \cite{schindler2022density} only utilized this connection in one direction. In \cite{srivastava2023density}, Technau and the author developed such a duality argument for locally flat and rough geometric objects, but they were both required to be \textit{hypersurfaces}.

    \item An important new ingredient in our argument is Proposition \ref{prop og vdc b}, which allows for passage from the sum of the dual weights (associated to hypersurfaces in $\mathbb{R}^{M}$) to the rational point count in a neighborhood of $\cM$. This should be compared to \cite[Proposition 5.3 and \S 6.3]{schindler2022density}, where the original counting problem is projected to a lower dimensional one associated to a family of hypersurfaces in $\mathbb{R}^{n+1}$, with a trivial summing up in the remaining $R-1$ directions. 
    In contrast, we utilize a dyadic version of $\mathfrak{N}_{w, \cM}(Q, \bdel)$ and retain the counting problem in all the codimensions.

    \item In order to utilize the information from all codimensions independently, our argument \textit{necessarily requires} estimates for a non-isotropic counting function (or else it is not possible to sum the dyadic terms \eqref{eq p2 1} in the proof of Proposition \ref{prop dual from og}). This inspired us to develop our argument entirely in a non-isotropic setting, leading to Theorem \ref{thm main}.

    \item The class of manifolds satisfying the condition \eqref{eq curv cond} provides a fertile ground for the above ideas to be developed. However, we expect them to be useful for the problem of counting rational points near more general manifolds like curves satisfying a weaker curvature condition. We aim to address this in a future work.

\end{itemize}

\subsection{Sharpness}
The main result from \cite{schindler2022density} (see Theorem \ref{thm SY}) broke through the $\delta>{Q^{-\frac{1}{R}}}$ threshold for manifolds $\cM$ satisfying \eqref{eq curv cond}, while our results (Corollaries \ref{cor main asump} and \ref{cor hetero}) go even beyond. A natural question is: what is the biggest range of $\delta$ in which the asymptotic/upper bound in Corollary \ref{cor main asump}/Corollary \ref{cor hetero} holds true? The answer naturally depends on the order of the error term in the asymptotic expansion \eqref{eq homog main est}, or more generally, in the upper bound in \eqref{eq hetero asymp}.

In this paper, we establish that this error term 
is of the order of $Q^{\Theta}\mathcal{E}_n(Q)$, where $\mathcal{E}_n(Q)$ is as in \eqref{eq aux error} and
$$ \Theta:=\begin{cases}
       \frac{n(n+R+1)}{n+2R}, & 1\leq R \leq 2.\\
       n+1-\frac{nR}{n+2(R-1)-\frac{4}{n}}, & R\geq 3.
    \end{cases}.$$
Proposition \ref{prop 1} states the combined effect of the two inductive substeps connecting the counting function $\mathfrak{N}_{w, \cM}(Q, \bdel)$ to its dual and vice versa (via the van der Corput B process). Neglecting logarithmic terms for the course of this discussion, a single application of Proposition \ref{prop 1} brings down the order of the error term from $Q^{\beta}$ to $Q^{\Tilde{\beta}}$, with
$$\Tilde{\beta}=n+1-\frac{nR}{n+2\left(R-\frac{n}{2\beta-n}\right)}\,.$$
Note that we have the trivial estimate $\mathfrak{N}_{w, \cM}(Q, \bdel)\ll \delp Q^{n+1}$. Starting with $\beta_0=n+1$, the above recursive relation yields a decreasing sequence $\{\beta_i\}_{i\geq 0}$ which converges to $$\frac{n(n+R+1)}{n+2R}$$ after roughly $\log \log Q$ many steps, \textit{irrespective of} whether $R\geq 2$ or $R\leq 2$. Indeed, when $R=2$, and $\cM$ satisfies \eqref{eq curv cond}, we conjecture that up to logarithmic losses, the error term is of the order of
$$Q^{\frac{n(n+R+1)}{n+2R}}.$$
When $R=1$ and $\cM$ is a hypersurface with non-vanishing curvature, this conjecture is true (as established in \cite{huangduke}); 
while our Theorem \ref{thm main} establishes that the error term is of at most this order for $R=2$.

On the other hand, for $R\geq 3$, the induction process stops once the error terms is of the order of $Q^{\Theta}$ with
\begin{equation}
    \label{eq theta crit}
    \Theta=n+1-\frac{nR}{n+2(R-1)-\frac{4}{n}}.
\end{equation}
This is because below this critical value, it is the so called ``error term'' from the stationary phase expansion for the oscillatory integrals associated to the dual hypersurfaces (the last term on the right in \eqref{eq dual vdc b}), which begins to dominate over the main terms (the first two terms on the right in \eqref{eq dual vdc b}). In other words, for larger values of $R$, the order of the error term obtained from a single term stationary phase expansion determines the critical value $\Theta$. It is interesting to ask whether it is possible to go below even this threshold when $R\geq 3$, and in particular, to realize the order of $Q^{\frac{n(n+R+1)}{n+2R}}$ for the error term.


\subsection{Outline of the paper} 
\begin{itemize}
    \item Section \ref{sec prelim} collates technical results used throughout the paper.
    \item Section \ref{sec overview} contains an outline of the proof of Theorem \ref{thm main}. 
    \begin{itemize}
        \item We first deduce it as a consequence of Proposition \ref{prop 1}. 
        \item  Next, this proposition is broken down into two further sub steps: Propositions \ref{prop dual from og} and \ref{prop og from dual}. The former uses an upper bound on the counting function associated with $\cM$ to obtain a better bound on a dual counting function associated to a family of $n$ dimensional hypersurfaces in $\mathbb{R}^{n+1}$. Proposition \ref{prop og from dual} does the reverse: it bounds the counting function associated to $\cM$ using the upper bound on the dual counting function.
        \item The proofs of Propositions \ref{prop dual from og} and \ref{prop og from dual} rely on a combination of projective duality and van der Corput's B-process to pass from $\mathcal{M}$ to the family of dual hypersurfaces, and back. This connection is made precise in Propositions \ref{prop dual vdc b} and \ref{prop og vdc b}. The former enables passage from the sum of the dual weights to rational point count in the neighborhood of $\cM$, while the latter achieves the same in the reverse direction.
    \end{itemize}
    \item Section \ref{sec dual from og} contains the proof of Proposition \ref{prop dual from og} modulo Proposition \ref{prop dual vdc b} which is itself proven in Section \ref{sec dual vdc b}. Similarly, Section \ref{sec og from dual} contains the proof of Proposition \ref{prop og from dual} modulo Proposition \ref{prop og vdc b} which is proven in Section \ref{sec og vdc b}. 
    \item Theorem \ref{thm homog main} is proven in Section \ref{sec thm homog main} using Theorem \ref{thm main}.
    \item Theorem \ref{thm hausd well app} is deduced as a corollary of Theorem \ref{thm main} in Section \ref{sec thm dio approx}.
\end{itemize}

\begin{remark}
\label{rem RH}
The condition \eqref{eq curv cond} imposes strong restrictions on how large the codimension $R$ of $\cM$ can be with respect to its dimension $n$. Indeed, as mentioned in \cite{schindler2022density}, the problem of finding $n$ dimensional matrices satisfying \eqref{eq curv cond} is connected to question of determining the number of linearly independent vector fields on spheres. An $R$-tuple of symmetric matrices satisfying the condition gives rise to a system of $(R-1)$ linearly
independent vector fields on the $n-1$ dimensional sphere in $\mathbb{R}^n$. We refer the interested reader to \cite{adams1962vector} for further reading. In particular, 
\begin{equation}
    \label{R RH}
    R\leq \RH(n),
\end{equation}
where $\RH(n)$ are Radon-Hurwitz numbers (see \cites{hurwitz, radon}) defined as follows:  if  $n =(2n_1+1) 2^{4n_2+n_3}$ with $n_3\in \{0,1,2,3\}$ and for some $n_1\in  \{0,1,2,3,\dots\}$, then  $\RH(n) =8n_2+2^{n_3}$.
Notice that for odd $n$, we have 
\begin{equation}
    \label{odd RH}
    R\leq \RH(n)=1.
\end{equation}
For the interested reader, we also point out that the works of Radon and Hurwitz had originally been in the setting of Heisenberg type groups, where an analogous connection dictates how large the dimension of the centre of the Lie algebra can be as compared to the dimension of the group. We refer to \cite{Kaplan1980} and \cite{roos2022lebesgue} for further reading.
\end{remark}

\subsection{Notation} 
All vectors shall be denoted by boldface letters, e.g., $\bx, \by, \bz, \bdel, \bk$. We shall use $|\cdot|$ to denote the $\ell^\infty$ norm of the vector under consideration. In other words, for $\bz\in \mathbb{R}^k$ (where $k\in \mathbb{N}$), 
$$|\bz|:=\max_{1\leq i\leq k} |z_i|.$$ Given $\varepsilon>0$ and $\bx\in \mathbb{R}^n$, $B_{\varepsilon}(\bx)$ shall denote the open ball defined with respect to the metric induced by this norm, centred at $\bx$ and of radius $\varepsilon$; i.e.,
$$B_{\varepsilon}(\bx):=\{\by: |\bx-\bz|< \varepsilon\}.$$

For an open set $\mathscr{X}\in \mathbb{R}^n$ and $\ell\in \mathbb{N}$, we shall denote the set of all $\ell$-times continuously differentiable functions defined on $\mathscr{X}$ by $C^{\ell}(\mathscr{X})$, the set of all smooth functions defined on $\mathscr{X}$ by $C^{\infty}(\mathscr{X})$ and the set of all smooth functions defined on $\mathscr{X}$ with compact support by $C^{\infty}_0(\mathscr{X})$. The gradient of a function $f\in C^{1}(\mathscr{X})$ shall be denoted by
$$\nabla f:=\left(\frac{\partial f}{\partial x_1}, \ldots, \frac{\partial f}{\partial x_n}\right).$$
For $t\in \mathbb{R}$, we define
$$e(t):=e^{2\pi i t}.$$

In the following, given a positive integer $k$ and functions $A, B: \mathscr{X}\subset\mathbb{R}^k\to \mathbb{C}$, we shall use the notation $A\ll B$ to denote the fact that $|A(\bx)|<C|B(\bx)|$ for all $\bx\in \mathscr{X}$, where $C>0$ is a constant which is allowed to depend on $w$ and $\cM$, and therefore also on 
\begin{itemize}
    \item the dimension $n$ and codimension $R$,
    \item $\varepsilon_0$ and $\bx_0$, 
    \item the constant $\mathfrak{C}_0$ in condition \eqref{eq curv est},
    \item upper bounds for the functions $f_r$ ($1\leq r\leq R$) and $w$, and for their finitely many derivatives, on the domain ${B_{4\varepsilon_0}(\bx_0)}$.
\end{itemize}

\subsection{Acknowledgement} The author is
supported by the Deutsche Forschungsgemeinschaft (DFG, German Research Foundation) under Germany’s Excellence Strategy 
- EXC-2047/1 - 390685813 as well as SFB 1060. She is grateful to Lillian Pierce, Andreas Seeger and Niclas Technau for insightful discussions at different stages of this project. The author also thanks Lars Becker for carefully reading Section \ref{sec introduction} and providing useful comments, and Jing-Jing Huang for directing her to a precise version of Conjecture \ref{conj main}. Finally, she is thankful to the anonymous referee for a thorough review of this manuscript and for helpful suggestions that improved the exposition. 

\section{Preliminaries}
\label{sec prelim}
In this section, we collect various technical results used throughout the paper.

\subsection{Oscillatory Integral Estimates}

\begin{lemma}[Non-Stationary Phase]
\label{lem non st phase}   
Let $d, K\in \mathbb{Z}_{>0}$ and $\mathscr{U}\subseteq \mathbb{R}^d$ be a bounded open set. Let $\omega\in C^{\infty}(\mathbb{R}^d)$, with $\textrm{supp }(\omega)\subseteq \mathscr{U}$ and $\varphi\in C^{\infty}(\mathscr{U})$ with $\nabla \varphi (\bx)\neq \bzero$ for all $\bx\in \textrm{supp }(\omega)$. Then for any $\lambda>0$, we have
$$\left|\int_{\mathbb{R}^d}\omega(\bx) e\left(\lambda \varphi(\bx)\right)\, d\bx\right|\ll \lambda^{-K+1},$$
where the implied constant depends only on $K, d$, upper bounds for the absolute values of finitely many derivatives of $\omega$ and $\varphi$ on $\mathscr{U}$, and the lower bound for $|\nabla \varphi|$ on $\textrm{supp }\omega$.
\end{lemma}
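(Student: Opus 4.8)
The statement to prove is the standard non-stationary phase lemma: repeated integration by parts.

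\textbf{Approach.} The plan is to prove this by repeated integration by parts, exploiting the fact that $\nabla\varphi$ does not vanish on the support of $\omega$. The key device is the first-order differential operator
\[
L := \frac{1}{2\pi i \lambda}\,\frac{1}{|\nabla\varphi|^2}\sum_{j=1}^d \frac{\partial\varphi}{\partial x_j}\,\frac{\partial}{\partial x_j},
\]
which is designed so that $L\bigl(e(\lambda\varphi)\bigr)=e(\lambda\varphi)$. Its formal transpose (with respect to integration against Lebesgue measure on $\mathbb{R}^d$) is
\[
L^{\mathsf t} g = -\frac{1}{2\pi i\lambda}\sum_{j=1}^d \frac{\partial}{\partial x_j}\!\left(\frac{1}{|\nabla\varphi|^2}\,\frac{\partial\varphi}{\partial x_j}\,g\right).
\]
Since $\omega$ is smooth and compactly supported in $\mathscr{U}$, and $\varphi\in C^\infty(\mathscr{U})$ with $|\nabla\varphi|$ bounded below on $\supp\omega$, the coefficient vector $\nabla\varphi/|\nabla\varphi|^2$ is smooth on a neighborhood of $\supp\omega$; hence $L^{\mathsf t}$ maps smooth compactly supported functions to smooth compactly supported functions, with no boundary terms arising in the integration by parts.

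\textbf{Steps.} First I would fix $K$ and set up $L$ and $L^{\mathsf t}$ as above, checking the identity $L(e(\lambda\varphi)) = e(\lambda\varphi)$ by a direct computation. Second, applying this identity and integrating by parts $K-1$ times gives
\[
\int_{\mathbb{R}^d}\omega(\bx)\,e(\lambda\varphi(\bx))\,d\bx = \int_{\mathbb{R}^d}\bigl((L^{\mathsf t})^{K-1}\omega\bigr)(\bx)\,e(\lambda\varphi(\bx))\,d\bx,
\]
with no boundary contributions because each intermediate integrand is compactly supported in $\mathscr{U}$. Third, I would observe that each application of $L^{\mathsf t}$ pulls out a factor $\lambda^{-1}$ and differentiates once, so $(L^{\mathsf t})^{K-1}\omega$ is a sum of terms of the form $\lambda^{-(K-1)}$ times a product of derivatives (up to order $K-1$) of $\omega$ and of the coefficient functions built from $\varphi$ and $1/|\nabla\varphi|^2$; on $\mathscr{U}$ these are all bounded by a constant depending only on $K$, $d$, finitely many derivative bounds for $\omega$ and $\varphi$, and the lower bound for $|\nabla\varphi|$ on $\supp\omega$. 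Finally, bounding the remaining integral trivially by the measure of $\mathscr{U}$ (finite, since $\mathscr{U}$ is bounded) and $\|e(\lambda\varphi)\|_\infty = 1$ yields
\[
\left|\int_{\mathbb{R}^d}\omega(\bx)\,e(\lambda\varphi(\bx))\,d\bx\right| \ll \lambda^{-(K-1)} = \lambda^{-K+1},
\]
as claimed, with the implied constant of the stated dependence. (Note the case $\lambda\le 1$ is trivial from $|\int\omega\,e(\lambda\varphi)|\le\meas(\mathscr U)$ and the case $K=1$ is likewise trivial, so we may assume $\lambda>1$ and $K\ge 2$.)

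\textbf{Main obstacle.} There is no serious obstacle here; this is a routine argument. The only points requiring a modicum of care are (i) verifying that no boundary terms appear — which is immediate from $\omega\in C_0^\infty(\mathscr{U})$ together with the fact that $L^{\mathsf t}$ preserves smoothness and support, using crucially that $|\nabla\varphi|$ is bounded away from zero on $\supp\omega$ so that $1/|\nabla\varphi|^2$ is smooth there — and (ii) bookkeeping the derivative orders to confirm that only finitely many derivatives of $\omega$ and $\varphi$ enter the final constant. Both are standard and I would treat them briefly.
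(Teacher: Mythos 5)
Your proof is correct and is the standard integration-by-parts argument; the paper itself does not write out a proof but simply cites H\"ormander's Theorem 7.7.1, whose proof is exactly the argument you give (the operator $L$ normalizing $e(\lambda\varphi)$, its transpose applied $K-1$ times, no boundary terms since $\omega\in C_0^\infty(\mathscr U)$ and $|\nabla\varphi|$ is bounded below on the compact set $\supp\omega$). Your handling of the trivial cases $\lambda\le 1$ and $K=1$ and of the constant's dependence matches the statement.
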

\begin{proof}
    See \cite[Theorem 7.7.1]{hormander}
\end{proof}

\begin{lemma}[Stationary Phase]
\label{lem st phase}    
Let $\mathscr{U}, \mathscr{U}_1\subseteq \mathbb{R}^d$ be bounded open sets, with $\overline{\mathscr{U}}\subset\mathscr{U}_1$. Let $\omega\in C^{\infty}(\mathbb{R}^d)$, with $\textrm{supp }(\omega)\subseteq \mathscr{U}$ and $\varphi\in C^{\infty}(\mathscr{U}_1)$. Suppose there exists $\bv_0\in \mathscr{U}$ with $\nabla \varphi(\bv_0)=0$, and $\nabla \varphi(\bx)\neq0$ for all $\bx\in \overline{\mathscr{U}}\setminus\{\bv_0\}$. Further, let
$$\Delta:=\left|\det\, H_{\varphi}(\bv_0)\right|\neq 0$$ and $\sigma$ be the signature of $H_{\varphi}(\bv_0)$. Then for any $\lambda>0$, we have
\begin{equation}
    \label{eq st phase}
    \int_{\mathbb{R}^d} e\left(\lambda\varphi(\bx)\right) \omega(\bx)\, d\bx= \lambda^{-\frac{d}{2}}\Delta^{-\frac{1}{2}}e\left(\lambda\varphi(\bv_0)+\frac{\sigma}{8}\right)\left(\omega(\bv_0)+O\left(\lambda^{-1}\right)\right),
\end{equation}
where the implicit constant depends only on $d$, upper bounds for (the absolute values of) finitely many derivatives of $\omega$ and $\varphi$ on $\mathscr{U}_1$, an upper bound for 
$$\sup_{\bx\in \mathscr{U}_1\setminus\{\bv_0\}}\frac{|\bx-\bv_0|}{\nabla \varphi (\bx)}$$ and a lower bound for $\Delta$.
\end{lemma}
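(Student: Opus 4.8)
The plan is to prove \eqref{eq st phase} by the classical stationary phase recipe: localize near the critical point, straighten the phase by Morse's lemma, and then evaluate the resulting Gaussian integral exactly; the shortest route is in fact to quote \cite[Theorem~7.7.5]{hormander}, but I sketch the argument so as to see where the stated dependence of the implicit constant originates. First I would set $M:=\sup_{\bx\in\sU_1\setminus\{\bv_0\}}|\bx-\bv_0|/|\nabla\varphi(\bx)|$ (finite, by hypothesis), fix $\rho>0$ with $\overline{B_{2\rho}(\bv_0)}\subseteq\sU_1$, and choose $\chi\in C^\infty_0(B_{2\rho}(\bv_0))$ with $\chi\equiv1$ on $B_{\rho}(\bv_0)$. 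Splitting the integral according to the cutoffs $\omega\chi$ and $\omega(1-\chi)$ gives $I_0+I_1$; on $\supp(\omega(1-\chi))$ one has $|\bx-\bv_0|\geq\rho$ and hence $|\nabla\varphi(\bx)|\geq\rho/M>0$, so Lemma \ref{lem non st phase} yields $I_1\ll_N\lambda^{-N}$ for all $N$, which will be absorbed into the error term of \eqref{eq st phase}.

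Next, since $A:=H_\varphi(\bv_0)$ is invertible with $|\det A|=\Delta$ bounded below, Morse's lemma provides, after shrinking $\rho$, a $C^\infty$ diffeomorphism $\psi$ from a neighbourhood of $\bzero$ onto $B_{2\rho}(\bv_0)$ with $\psi(\bzero)=\bv_0$, $D\psi(\bzero)=\mathrm{Id}$ and
\[
\varphi(\psi(\by))=\varphi(\bv_0)+\tfrac12\langle A\by,\by\rangle,
\]
with finitely many derivatives of $\psi$ controlled by finitely many derivatives of $\varphi$ on $\sU_1$ and by the lower bound for $\Delta$. Changing variables in $I_0$,
\[
I_0=e\bigl(\lambda\varphi(\bv_0)\bigr)\int_{\R^d}e\!\left(\tfrac{\lambda}{2}\langle A\by,\by\rangle\right)g(\by)\,d\by,\qquad g(\by):=(\omega\chi)(\psi(\by))\,\lvert\det D\psi(\by)\rvert,
\]
where $g\in C^\infty_0$, $g(\bzero)=\omega(\bv_0)$, and finitely many derivatives of $g$ are bounded in terms of the admissible data.

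For the Gaussian integral I would write $g(\by)=\int_{\R^d}\widehat g(\bxi)\,e(\by\cdot\bxi)\,d\bxi$ (so $\widehat g$ is Schwartz), interchange the order of integration, and complete the square in each variable after orthogonally diagonalizing $A$; the one-dimensional Fresnel identity $\int_{\R}e(\tfrac a2 t^2)\,dt=|a|^{-1/2}e(\sgn(a)/8)$ then gives
\[
\int_{\R^d}e\!\left(\tfrac{\lambda}{2}\langle A\by,\by\rangle+\by\cdot\bxi\right)d\by=\lambda^{-d/2}\Delta^{-1/2}e(\sigma/8)\,e\!\left(-\tfrac{1}{2\lambda}\langle A^{-1}\bxi,\bxi\rangle\right),
\]
since $\prod_j|\mu_j|=\Delta$ and $\sum_j\sgn(\mu_j)=\sigma$ for the eigenvalues $\mu_j$ of $A$. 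Hence
\[
\int_{\R^d}e\!\left(\tfrac{\lambda}{2}\langle A\by,\by\rangle\right)g(\by)\,d\by=\lambda^{-d/2}\Delta^{-1/2}e(\sigma/8)\int_{\R^d}\widehat g(\bxi)\,e\!\left(-\tfrac{1}{2\lambda}\langle A^{-1}\bxi,\bxi\rangle\right)d\bxi,
\]
and expanding the last exponential as $1+O(|\bxi|^2/\lambda)$, using $\int_{\R^d}\widehat g=g(\bzero)=\omega(\bv_0)$ together with $\int_{\R^d}|\bxi|^2|\widehat g(\bxi)|\,d\bxi<\infty$, yields $\omega(\bv_0)+O(\lambda^{-1})$. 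Combining with the previous paragraph proves \eqref{eq st phase}.

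The argument is entirely classical, so I do not expect a genuine obstacle; the only delicate point is bookkeeping — verifying that $\rho$, the Morse diffeomorphism $\psi$, and therefore $g$ and the tail $\int|\bxi|^2|\widehat g|$, can all be controlled uniformly in terms of just finitely many derivatives of $\omega$ and $\varphi$ on $\sU_1$, the quantity $M$, and the lower bound on $\Delta$, which are precisely the quantities listed in the lemma. Here the lower bound on $\Delta$ is used both to set up the Morse normal form and in the factor $\Delta^{-1/2}$, while $M$ enters only through the non-stationary estimate for $I_1$.
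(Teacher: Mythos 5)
Your sketch is correct and is precisely the classical argument (localization, quantitative Morse lemma, Fresnel/Fourier evaluation of the Gaussian) found in the references the paper cites for this lemma — the paper's own ``proof'' is just the citation to \cite[Chapter VIII, Proposition 6]{stein1993harmonic} and \cite[Theorem 7.7.5]{hormander}. The bookkeeping you flag does go through: an upper bound on $\|H_\varphi(\bv_0)\|$ together with the lower bound on $\Delta$ controls the smallest eigenvalue of the Hessian, which is what the Morse normal form and the factor $e\bigl(-\tfrac{1}{2\lambda}\langle A^{-1}\bxi,\bxi\rangle\bigr)=1+O(|\bxi|^2/\lambda)$ actually require.
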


\begin{proof}
    See \cite[Chapter VIII, Proposition 6]{stein1993harmonic} or \cite[Theorem 7.7.5]{hormander}.
\end{proof}

\subsection{Selberg Magic Functions}
\label{subsec selb magic func}
Let $I=(\alpha, \beta)$ be an arc of $\mathbb{R}/\mathbb{Z}$ with $0<\beta-\alpha<1$, and let $\mathds{1}_{I}$ denote its characteristic function. Then given $J\in\mathbb{Z}_{>0}$, there exist finite trigonometric polynomials $S_{J^{\pm}}: [0, 1]\to \mathbb{R}$ of degree at most $J$
$$S_{J^{\pm}}(x):=\sum_{j=-J}^{J}\Hat{S}_{J^{\pm}}(j)e(jx), $$
satisfying the following properties:
\begin{align}
    \label{eq selb1}
    S_{J^-}(\theta)&\leq \mathds{1}_I(\theta)\leq S_{J^+}(\theta),\qquad \theta\in \mathbb{R}/\mathbb{Z};\\
    \label{eq selb0}
    \Hat{S}_{J^{\pm}}(0)&=\beta-\alpha\pm \frac{1}{J+1};\qquad \text{ and}\\
    \label{eq selbj}
    |\Hat{S}_{J^{\pm}}(j)|&\leq \frac{1}{J+1}+\min\left(\beta-\alpha, \frac{1}{\pi|j|}\right).
\end{align}
We refer to \cite[Chapter 1]{montgomery1994ten} for details about the construction of these functions.

\subsection{The Fej\'er Kernel}
For $D\in \mathbb{Z}_{>0}$, let $\mathcal{F}_D: [0, 1]\to \mathbb{R}$ be the Fej\'er kernel of degree $D$ given by
\begin{equation}
    \label{def fejer ker}
    \mathcal{F}_D(\theta)= \sum_{d=-D}^{D}\frac{D-|d|}{D^2} e(d\theta)=\left(\frac{\sin(\pi D\theta)}{D\sin(\pi\theta)}\right)^2.
\end{equation}
Let $\delta^*\in (0, 1/2)$ be such that $D=\left\lfloor \frac{1}{2\delta^*}\right\rfloor.$ Since $|\sin (\pi x)|\geq 2x$ for $x\in [0, 1/2]$, we have
$$\left(\frac{\sin(\pi D\theta)}{D\sin(\pi\theta)}\right)^2\geq \left(\frac{2 D\|\theta\|}{D\pi \|\theta\|}\right)^2=\frac{4}{\pi^2}$$
whenever $\|\theta\|\in \left(0, \delta^*\right)$.
In other words, letting $\mathds{1}_{\delta^*}$ denote the characteristic function of the set $\{x\in \mathbb{R}: \|x\|\leq \delta^*\}$, we have
\begin{equation}
    \label{eq fejer char est}
    \mathds{1}_{\delta^*}(\theta)\leq \frac{4}{\pi^2} \mathcal{F}_D(\theta).
\end{equation}

\section{Overview of the Proof of Theorem \ref{thm main}}
\label{sec overview}
We now fix $n\geq 2$ and a sufficiently small $\varepsilon_0>0$. Further, define \begin{equation}
\label{def beta stop}
    \beta_{\textrm{st}}:= \max\left(\frac{n(n+R+1)}{n+2R}, \frac{n(n+1)}{n+2}\right).
\end{equation}
Observe that $\beta_{\textrm{st}}=\Theta$ for $R\in \{1, 2\}$,
while $ \beta_{\textrm{st}}>\Theta$ for $ R\geq 3$.

Theorem \ref{thm main} will be a consequence of the following self-improving estimate for the manifold $\mathcal{M}$, assumed to satisfy condition \eqref{eq curv cond}.

\begin{proposition}
\label{prop 1}
Suppose there exists $\beta\in \left[\beta_{\textrm{st}}\,, n+1\right]$, $A\geq 1$ and $a\geq 0$ such that for all $Q\geq 1$ and for all $\bdel\in (0, 1/2)^R$, it is true that
\begin{equation}
    \label{eq p1 hypo}
    \mathfrak{N}_{w, \cM}(Q, \bdel)\leq A\left(\delp Q^{n+1}+ \sum_{r=1}^R \delpr Q^{\frac{(n+1)(R-1)}{R}} Q^{\frac{\beta}{R}} +Q^{\beta}\left(\log 4Q\right)^{\aR}\right).
\end{equation}

Then there exists a positive constant $C_0$ depending only on $w$ and $\cM$, such that for all $Q\geq 1$ and for all $\bdel\in (0, 1/2)^R$, we also have 
\begin{equation}
    \label{eq p1 concl}
    \mathfrak{N}_{w, \cM}(Q, \bdel)\leq C_0\,\delp Q^{n+1} +C_0 A 
   \left( \sum_{r=1}^R \delpr Q^{\frac{(n+1)(R-1)}{R}} Q^{\frac{\tilde{\beta}}{R}} +Q^{\tilde{\beta}}\left(\log 4Q\right)^{\aRR}\right);
\end{equation}
with 
\begin{equation}
    \label{eq beta rel}
    \tilde{\beta}=n+1-\frac{n R}{n+ 2\left(R-\frac{n}{2\beta-n}\right)}.
\end{equation}
\end{proposition}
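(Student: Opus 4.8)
The plan is to derive Proposition~\ref{prop 1} by composing the two directional estimates of Propositions~\ref{prop dual from og} and \ref{prop og from dual}: the former converts a bound of the form \eqref{eq p1 hypo} for $\mathfrak{N}_{w,\cM}(Q,\bdel)$ into an improved bound for a \emph{dual} counting function attached to a family of hypersurfaces in $\mathbb{R}^{n+1}$, and the latter converts that dual bound back into a bound of the form \eqref{eq p1 concl} for $\mathfrak{N}_{w,\cM}(Q,\bdel)$. Chaining them runs the round trip $\cM\rightsquigarrow\text{dual}\rightsquigarrow\cM$ once, replacing the exponent $\beta$ by $\tilde\beta$ as in \eqref{eq beta rel} and costing one extra power of $\log 4Q$ (hence $\aRR$ in place of $\aR$). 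The structural point to keep track of is that the main term $\delp Q^{n+1}$ must re-emerge in \eqref{eq p1 concl} with an \emph{absolute} constant $C_0$ rather than a multiple of $A$; this is what will later permit iterating Proposition~\ref{prop 1} about $\log\log Q$ times, down to $\Theta$, in the proof of Theorem~\ref{thm main}, without the implied constant exploding.

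The passage $\cM\to\text{dual}$ rests on the van der Corput B-process of Proposition~\ref{prop dual vdc b}. Replacing each constraint $\|qf_r(\ba/q)\|\le\delta_r$ by a Selberg majorant/minorant or a Fej\'er kernel of degree $\asymp\delta_r^{-1}$ (cf.\ \S\ref{subsec selb magic func}) writes $\mathfrak{N}_{w,\cM}(Q,\bdel)$ as a weighted sum over $\bk\in\mathbb{Z}^R$ of exponential sums $\sum_{q\le Q}\sum_{\ba}w(\ba/q)\,e\big(q\textstyle\sum_{r}k_rf_r(\ba/q)\big)$. Poisson summation in $\ba$ turns each inner sum into a sum of oscillatory integrals with large parameter $q$ and phase $\sum_r k_rf_r(\bx)-\bfm\cdot\bx$; the curvature hypothesis \eqref{eq curv cond} makes the Hessian of $\sum_r k_rf_r$ non-degenerate for every $\bk\ne\bzero$, so Lemma~\ref{lem st phase} applies and its critical value, homogenized in $\bk$, parametrizes the dual hypersurfaces. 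Using the fact (inherited from \cite{schindler2022density}) that these again carry a uniform non-vanishing curvature bound, a dyadic decomposition of the $\bk$- and $q$-sums lets one feed \eqref{eq p1 hypo} back in and bound the dual counting function. The non-isotropic formulation is indispensable here: the dyadic block at $\delta_r\asymp\delta_r^{(0)}$ contributes a term weighted by $\delpr$, and the resulting geometric series over blocks converges only if the $\delta_r$ are tracked separately. The reverse passage, Proposition~\ref{prop og from dual} via Proposition~\ref{prop og vdc b}, uses that the B-process is, up to its genuine stationary-phase error, an involution: a second round of Poisson summation and Lemma~\ref{lem st phase} returns to $\mathfrak{N}_{w,\cM}(Q,\bdel)$, now carrying the improved exponent.

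The exponent arithmetic is the core bookkeeping. Reading off from \eqref{eq p1 hypo} the transition scale where $\delp Q^{n+1}$ meets $Q^\beta$, i.e.\ $\delta_r\asymp Q^{(\beta-n-1)/R}$, one finds that, after optimizing the dyadic splitting, each loop buys a gain of exponent $\frac{n}{2\beta-n}$; combined with one single-term stationary-phase expansion and the usual induction on scales in the dual variable, this turns $\beta$ into
\[
\tilde\beta \;=\; n+1-\frac{nR}{\,n+2\big(R-\frac{n}{2\beta-n}\big)}\,,
\]
which is \eqref{eq beta rel}. The hypothesis $\beta\in[\beta_{\mathrm{st}},n+1]$ (with $\beta_{\mathrm{st}}$ as in \eqref{def beta stop}) is used precisely to keep this legitimate: it forces $2\beta-n>0$ and $R-\frac{n}{2\beta-n}>0$, so the right-hand side is well defined and (for $\beta>\beta_{\mathrm{st}}$) strictly below $\beta$, and — the point for $R\ge3$ — it halts the process before the genuine error term of the single-term stationary-phase expansion starts to dominate the main terms. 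A direct check shows that $\frac{n(n+R+1)}{n+2R}$ is the fixed point of this recursion, which is why it appears in $\beta_{\mathrm{st}}$ and in $\Theta$ for $R\le2$.

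I expect two steps to be the main obstacles. The first is the geometric input, already isolated in \cite{schindler2022density}: showing that the hypersurfaces obtained by freezing $R-1$ of the parameters and then applying stationary phase inherit a non-vanishing curvature bound from \eqref{eq curv cond}, \emph{uniformly} over the frozen parameters and the dyadic blocks; without this, neither direction of the round trip closes. The second is the combinatorial control of the error terms: the tails of the Selberg/Fej\'er approximations, the non-stationary ranges (handled by Lemma~\ref{lem non st phase}), the off-diagonal frequency ranges, and the accumulated stationary-phase errors must all be summed over the dyadic decompositions in $Q$, in $\bk$, and in each $\delta_r$ while yielding nothing larger than $\delpr Q^{(n+1)(R-1)/R}Q^{\tilde\beta/R}$ or $Q^{\tilde\beta}(\log4Q)^{\aRR}$. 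Making those geometric series converge is exactly what both forces and is enabled by the non-isotropic separation of the $\delta_r$; checking that no cross term beats the advertised bound is the delicate accounting.
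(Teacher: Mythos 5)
Your proposal is correct and follows essentially the same route as the paper: Proposition \ref{prop 1} is obtained by composing Proposition \ref{prop dual from og} (with $A_1=A$, $a_1=a$, yielding $\alpha=n+R-\tfrac{n}{2\beta-n}$, where $\beta\ge\beta_{\textrm{st}}$ guarantees this term realizes the maximum in \eqref{eq alpha beta rel}) with Proposition \ref{prop og from dual}, whose exponent recursion $\tilde\beta=n+1-\tfrac{nR}{2\alpha-n}$ reproduces \eqref{eq beta rel}, and then summing over the distinguished codimension $s$ and the signs via \eqref{eq split codim}. Your structural observations — that the main term $\delp Q^{n+1}$ must reappear with a constant independent of $A$, and that the underlying mechanism is the Selberg/Fej\'er--Poisson--stationary-phase round trip of Propositions \ref{prop dual vdc b} and \ref{prop og vdc b} — match the paper's argument exactly.
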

\begin{proof}[Proof of Theorem \ref{thm main}, assuming Proposition \ref{prop 1}]
Using the trivial estimate    
\begin{equation*}
     \mathfrak{N}_{w, \cM}(Q, \bdel)\leq \delp Q^{n+1} +Q^{n+1}\leq \delp Q^{n+1}+ \sum_{r=1}^R \delpr Q^{\frac{(n+1)(R-1)}{R}} Q^{\frac{n+1}{R}} +Q^{n+1}
\end{equation*}
as the starting point, we apply Proposition \ref{prop 1} repeatedly. 
We first consider the case when $\beta_{\textrm{st}}=\frac{n(n+R+1)}{n+2R}$, which arises when $R\in \{1, 2\}$.

Applying Proposition \ref{prop 1} $N$ many times, we get
\begin{equation}
    \label{eq betaN}
     \mathfrak{N}_{w, \cM}(Q, \bdel)\leq C_0\delp Q^{n+1} +C^N \left( \sum_{r=1}^R \delpr Q^{\frac{(n+1)(R-1)}{R}} Q^{\frac{\beta_N}{R}} +Q^{\beta_N}\left(\log 4Q\right)^{N(R-1)+R}\right),
\end{equation}
where $C_0, C\geq 1$ are constants depending on $w$ and $\cM$; and $\beta_N$ is obtained from the recursive relation
\begin{equation}
    \label{eq beta rec rel}
    {\beta}_i=n+1-\frac{n R}{2 R+ n\left(1-\frac{2}{2\beta_{i-1}-n}\right)},\qquad\beta_0:=n+1 .
\end{equation}

To determine how rapidly this sequence converges to $\frac{n(n+R+1)}{n+2R}$, we calculate
\begin{align}
    \beta_i-\frac{n(n+R+1)}{n+2R}&= n+1-\frac{n R}{2 R+ n\left(1-\frac{2}{2\beta_{i-1}-n}\right)}-\frac{n(n+R+1)}{n+2R}\nonumber\\
    &=\frac{(n+2)R}{n+2R}-\frac{n R}{2 R+ n\left(1-\frac{2}{2\beta_{i-1}-n}\right)}.
    \label{eq best1}
\end{align}
We can rewrite the last expression above as
\begin{align}
&\frac{(n+2)R}{n+2R}-\frac{n R(2\beta_{i-1}-n)}{(n+2R)(2\beta_{i-1}-n)-2n}\nonumber\\
&= \frac{R}{(n+2R)(2\beta_{i-1}-n)-2n}\left[\frac{(n+2)((n+2R)(2\beta_{i-1}-n)-2n)-n(n+2R)(2\beta_{i-1}-n)}{n+2R}\right].
\label{eq best2}
\end{align}
The expression in square brackets equals
\begin{align}
\label{eq best3}
    \frac{-2n^2+2((n+2R)(2\beta_{i-1}-n)-2n)}{n+2R}&=4\beta_{i-1}-\frac{2n^2+2n(n+2R)+4n}{n+2R}\nonumber\\
    &=4\beta_{i-1}-\frac{4n(n+R+1)}{n+2R}.
\end{align}
From \eqref{eq best1}, \eqref{eq best2} and \eqref{eq best3}, we conclude that
\begin{equation}
 \beta_i-\frac{n(n+R+1)}{n+2R}=\frac{4R}{(2\beta_{i-1}-n)(n+2R)-2n}\left(\beta_{i-1}-\frac{n(n+R+1)}{n+2R}\right). \label{eq beta est1}
\end{equation}
Now since $\beta_{i-1}\geq \frac{n(n+R+1)}{n+2R}$, we have
$$(2\beta_{i-1}-n)(n+2R)-2n\geq n(n+2)-2n=n^2.$$
Plugging this into \eqref{eq beta est1}, we get
\begin{equation}
\label{eq beta est2.1}
\beta_i-\frac{n(n+R+1)}{n+2R}\leq \frac{4R}{n^2}\left(\beta_{i-1}-\frac{n(n+R+1)}{n+2R}\right).
\end{equation}
We start with the estimate
$$\beta_0-\frac{n(n+R+1)}{n+2R}=n+1-\frac{n(n+R+1)}{n+2R}=\frac{(n+2)R}{n+2R}\leq R.$$
After $N$ steps, using \eqref{eq beta est2.1}, we get
\begin{equation}
\label{eq beta est2}
\beta_N-\frac{n(n+R+1)}{n+2R}\leq \left(\frac{4R}{n^2}\right)^N R. 
\end{equation}

We now consider two subcases.
When $n\geq 3$, it follows from Remark \ref{rem RH} and the definition of the Radon-Hurwitz numbers that $R<n$ . Further, recall \eqref{odd RH} which says that $R=1$ whenever $n$ is odd. 
Thus,
\begin{equation}
\label{eq Rn ratio}
\frac{4R}{n^2}\leq \left\{\begin{array}{lr}
        \frac{4}{5} , & \text{if } n\geq 5\\\\
        \frac{3}{4} , & \text{if } n=4\\\\
        \frac{4}{9} , & \text{if } n=3.
        \end{array}\right.    
\end{equation}
Combining the above with \eqref{eq beta est2}, we get
$$\beta_N-\frac{n(n+R+1)}{n+2R}\leq \left(\frac{4}{5}\right)^N R.$$
Thus, after $$N=\left\lfloor\frac{\log R+\log\log 4Q}{\log (5/4)}\right\rfloor$$ many steps, we conclude from $\eqref{eq betaN}$ that 
$$\mathfrak{N}_{w, \cM}(Q, \bdel)\ll \delp Q^{n+1} +\sum_{r=1}^R \delpr Q^{\frac{(n+1)(R-1)}{R}} Q^{\frac{n(n+R+1)}{R(n+2R)}}(\log 4Q)^{\mathfrak{c}_2}  +Q^{\frac{n(n+R+1)}{n+2R}}\exp\left({\mathfrak{c}_2}(\log\log 4Q)^2\right),$$
for $R=2$; and
$$\mathfrak{N}_{w, \cM}(Q, \bdel)\ll \delp Q^{n+1} +\sum_{r=1}^R \delpr Q^{\frac{(n+1)(R-1)}{R}} Q^{\frac{n(n+R+1)}{R(n+2R)}}(\log 4Q)^{\mathfrak{c}_2}  +Q^{\frac{n(n+R+1)}{n+2R}}(\log 4Q)^{\mathfrak{c}_2},$$
for $R=1$. Here $\mathfrak{c}_2>0$ and the implied constant depend only on $w$ and $\cM$. This finishes the proof for $n\geq 3$ and $R\in \{1, 2\}$.

In the subcase when $n=2$ and $R=1$, the relation \eqref{eq beta est1} reduces to 
$$\beta_i-2=\frac{\beta_{i-1}-2}{2\beta_{i-1}-3}.$$
or equivalently, the identity
$$\frac{1}{\beta_i-2}=2+\frac{1}{\beta_{i-1}-2},$$
with $\beta_0=3$.
This lets us conclude that
$$\beta_i=2+\frac{1}{2i+1}.$$
This sequence also converges to the desired value of $2$, albeit at a much slower rate. Indded, after
$$N=\lfloor\sqrt{\log 4Q}\rfloor$$ many steps, we obtain using $\eqref{eq betaN}$ that
$$\mathfrak{N}_{w, \cM}(Q, \delta_1)\ll \delta_1 Q^{3} +Q^2\exp(\mathfrak{c}_1\sqrt{\log 4Q}),$$
for a large enough constant $\mathfrak{c}_1>0$ depending only on $w$ and $\cM$. This establishes our result also in the case when $n=2$.

We now come to the case when $\beta_{\textrm{st}}=\frac{n(n+1)}{2}$, which arises when $R\geq 3$. The argument proceeds in almost the same way as the previous case, except for one key difference. The sequence $\{\beta_i\}$ is still defined by the recursive relation \eqref{eq beta rec rel} and therefore converges to $\frac{n(n+R+1)}{n+2R}$. However, Proposition \ref{prop 1} can be applied only until $$\beta_i\geq \frac{n(n+1)}{2}>\frac{n(n+R+1)}{n+2R}.$$
Consequently, let $\beta_{k-1}, \beta_{k}$ be such that
$$\beta_{k}< \frac{n(n+1)}{2}\leq \beta_{k-1}.$$
Applying Proposition $\ref{prop 1}$ $k$ many times, we obtain
\begin{align*}
    \mathfrak{N}_{w, \cM}(Q, \bdel)&\leq C_0\,\delp Q^{n+1} +C^{k} \left( \sum_{r=1}^R \delpr Q^{\frac{(n+1)(R-1)}{R}} Q^{\frac{{\beta_k}}{R}} +Q^{{\beta_k}}\left(\log 4Q\right)^{\aRR}\right),\\
   &\leq C_0\,\delp Q^{n+1} +C^{k}\left( \sum_{r=1}^R \delpr Q^{\frac{(n+1)(R-1)}{R}} Q^{\frac{{n(n+1)}}{2R}} +Q^{\frac{n(n+1)}{2}}\left(\log 4Q\right)^{\aRR}\right),
\end{align*}
where $C_0, C_1$ are positive constants depending only on $w$ and $\cM$. 
We apply Proposition \ref{prop 1} one more time, with $\beta=\frac{n(n+1)}{2}$, which gives
\begin{align}
    \label{eq theta estimate}
    \mathfrak{N}_{w, \cM}(Q, \bdel)\leq C_0\,\delp Q^{n+1} +C^{k+1} \left( \sum_{r=1}^R \delpr Q^{\frac{(n+1)(R-1)}{R}} Q^{\frac{{\Theta}}{R}} +Q^{{\Theta}}\left(\log 4Q\right)^{\aRR}\right),
\end{align}
with
$$\Theta=n+1-\frac{n R}{n+ 2\left(R-\frac{n}{\frac{2n(n+1)}{2}-n}\right)}=n+1-\frac{nR}{n+2(R-1)-\frac{4}{n}}\,.$$
Since $\Theta\geq \frac{n(n+R+1)}{n+2R}$, we conclude using \eqref{eq beta est2} and \eqref{eq Rn ratio} that
$$\beta_k-\Theta\leq \beta_k-\frac{n(n+R+1)}{n+2R}\leq \left(\frac{4}{5}\right)^{k} R.$$
Therefore we can again bound the number of steps required by
$$\left\lfloor\frac{\log R+\log\log 4Q}{\log (5/4)}\right\rfloor +1. $$
Plugging the above into \eqref{eq theta estimate}, we get
$$\mathfrak{N}_{w, \cM}(Q, \bdel)\ll \delp Q^{n+1} +\sum_{r=1}^R \delpr Q^{\frac{(n+1)(R-1)}{R}} Q^{\frac{\Theta}{R}}(\log 4Q)^{\mathfrak{c}_2}  +Q^{\Theta}\exp\left({\mathfrak{c}_2}(\log\log 4Q)^2\right),$$
for $R>1$; and
$$\mathfrak{N}_{w, \cM}(Q, \bdel)\ll \delp Q^{n+1} +\sum_{r=1}^R \delpr Q^{\frac{(n+1)(R-1)}{R}} Q^{\frac{n(n+R+1)}{R(n+2R)}}(\log 4Q)^{\mathfrak{c}_2}  +Q^{\Theta}(\log 4Q)^{\mathfrak{c}_2},$$
for $R=1$. Here $\mathfrak{c}_2>0$ and the implied constant depend only on $w$ and $\cM$.

This establishes \eqref{eq main est} also in the case when $R\geq 3$ (note that the condition \eqref{R RH} forces $n$ to be at least $3$), and finishes the proof.
\end{proof}

Proposition \ref{prop 1} will be a direct consequence of two inductive sub steps, built on the connection between the rational point count near the manifold $\cM$ and the sum of certain ``dual'' weights associated to a \emph{family} of compact hypersurfaces in $\mathbb{R}^{n+1}$. We need some technical preparation first.

Let $\mathds{1}_{\delta}$ denote the characteristic function of the set $\{\theta: \|\theta\|\leq \delta\}$, and for $1\leq r\leq R$, set 
\begin{equation}
    \label{eq Jdef}
    J_r:=\left\lfloor \frac{1}{2\delta_r}\right\rfloor.
\end{equation}

Using the Selberg magic functions of degree $J_r$ to estimate $\mathds{1}_{\delta_r}$ as in \eqref{eq selb1},  we can bound 
\begin{equation*}
    \mathfrak{N}_{w, \cM}(Q, \bdel)=\sum_{\substack{\ba\in \mathbb{Z}^n\\1\leq q\leq Q}} w\left(\frac{\ba}{q}\right)\prod_{r=1}^R\mathds{1}_{\delta_r}\left(qf_r\left(\frac{\ba}{q}\right)\right)\leq \sum_{\substack{\ba\in \mathbb{Z}^n\\1\leq q\leq Q}} w\left(\frac{\ba}{q}\right)\prod_{r=1}^R S_{J^+_r}\left(qf_r\left(\frac{\ba}{q}\right)\right).
\end{equation*}
Expanding the Selberg magic functions into their fourier series and multiplying, we get
\begin{equation*}
\prod_{r=1}^R S_{J^+_r}\left(qf_r\left(\frac{\ba}{q}\right)\right)=\sum_{\substack{\bj\in \mathbb{Z}^R:\\|j_r|\leq J_r}} \left(\prod_{r=1}^R \widehat{S_{J^+_r}}(j_r)\right)\exp \left(\sum_{r=1}^Rj_r q f_r \aq\right).
\end{equation*}
The upshot is that
\begin{equation}
    \label{eq exp sum 1}
    \Nwm \leq \sum_{\substack{\bj\in \mathbb{Z}^R:\\|j_r|\leq J_r\\1\leq r\leq R}} \left(\prod_{r=1}^R \widehat{S_{J^+_r}}(j_r)\right)\sum_{\substack{\ba\in \mathbb{Z}^n\\1\leq q\leq Q}} w\left(\frac{\ba}{q}\right)\exp \left(\sum_{r=1}^Rj_r q f_r \aq\right).
\end{equation}
Using \eqref{eq selb0} with $\beta-\alpha=2\delta_r$ and $J=J_r$ for each $1\leq r\leq R$, we conclude that the contribution from the regime when $\bj=\bzero$ is
\begin{equation}
    \label{eq zerofreq}
    \prod_{r=1}^R\left(2\delta_r+\frac{1}{J_r+1}\right)\sum_{\substack{\ba\in \mathbb{Z}^n\\1\leq q\leq Q}} w\left(\frac{\ba}{q}\right)\ll_w \delp Q^{n+1}.
\end{equation}
Combining \eqref{eq exp sum 1}, \eqref{eq zerofreq} and the upper bound 
$$|\widehat{S_{J_r^+}}(j_r)|\leq 
\frac{1}{J_r+1}+\min \left(2\delta_r, \frac{1}{\pi|j_r|}\right)
\leq \frac{1}{|j_r|+1}
,$$
we get
\begin{equation}
    \label{eq exp sum 2}
    \Nwm \leq C_{w} \delp Q^{n+1}+ \sum_{\substack{\bj\in \mathbb{Z}^R\setminus\{\bzero\}:\\|j_r|\leq J_r\\1\leq r\leq R}} \left(\prod_{r=1}^R \frac{1}{|j_r|+1}\right)\left|\sum_{\substack{\ba\in \mathbb{Z}^n\\1\leq q\leq Q}} w\left(\frac{\ba}{q}\right)\exp \left(\sum_{r=1}^Rj_r q f_r \aq\right)\right|.
\end{equation}
For $1\leq s\leq R$ and $X\in \mathbb{Z}_{>0}$, we define the ``pencil set'' of indices corresponding to the  codimension indexed by $s$ to be
\begin{equation}
    \label{eq J index set}
    \mathcal{J}^s(X):=\{\bj\in\mathbb{Z}_{\geq 0}^{R}: 0<\|\bj\|_{\infty}=j_s\leq X\}.
\end{equation}
For later use, we also introduce the slightly larger set
\begin{equation}
    \label{eq Jbig index set}
    \mathcal{J}^s_{b}(X):=\{\bj\in\mathbb{Z}_{\geq 0}^{R}:  0<\|\bj\|_{\infty}\leq 2j_s\leq 2X\}.
\end{equation}
Further, for  each $1\leq s\leq R$, $\bj\in \mathcal{J}^s(X)$ and $\bgam\in \{-1, 1\}^{R}$, let 
\begin{equation}
    \label{eq def Fj}
    F_{s, \bj, \bgam}:=\gamma_sf_s+\sum_{\substack{1\leq r\leq R\\ r\neq s}} \gamma_r\frac{j_r}{j_s} f_r,
\end{equation}
and
\begin{align}
    {\mathfrak{N}}^{s, \bgam}_{w, \cM}(Q, \bdel)&:= \delp Q^{n+1}+ 
    \sum_{\bj\in \mathcal{J}^s(J_s)}\left(\prod_{r=1}^R \frac{1}{j_r+1}\right)\left|\sum_{\substack{\ba\in \mathbb{Z}^n\\1\leq q\leq Q}} w\left(\frac{\ba}{q}\right)\exp \left(\sum_{r=1}^R\gamma_rj_r q f_r \aq\right)\right|\nonumber\\
    &= \delp Q^{n+1}+ \sum_{\bj\in \mathcal{J}^s(J_s)}\left(\prod_{r=1}^R \frac{1}{j_r+1}\right)\left|\sum_{\substack{\ba\in \mathbb{Z}^n\\1\leq q\leq Q}} w\left(\frac{\ba}{q}\right)\exp \left(qj_s F_{s,\bj, \bgam}\aq\right)\right|.
    \label{eq Nog 11 sum}
\end{align}
Then it follows from the triangle inequality that
\begin{equation}
    \label{eq split codim}
    {\mathfrak{N}}_{w, \cM}(Q, \bdel)\ll \sum_{\bgam\in \{-1, 1\}^R}\sum_{s=1}^r {\mathfrak{N}}^{s, \bgam}_{w, \cM}(Q, \bdel).
\end{equation}
Our argument shall be independent of the signs of the coefficients $j_r$. Thus by conjugation, if need be, we can always reduce matters to the case when $\bgam=(1, 1, \ldots, 1)$. Henceforth, we shall specialize to this choice of $\bgam$ and suppress it from notation. In principle, the same argument can be made for the choice of a distinguished codimension $s$, but for clarity, we shall state the subsequent propositions for a general $s\in \{1, \ldots, R\}$.

Let \begin{equation}
    \label{eq def og domains}
    \mathscr{D}:= B_{2\varepsilon_0}(\bx_0),\qquad U:=\supp\, w\subseteq B_{\varepsilon_0}(\bx_0),
\end{equation}
with $\varepsilon_0$ chosen small enough so that there exists a constant $\mathfrak{C}_0>1$ such that for all $1\leq s\leq R$ and $\mathbf{t}\in [-2, 2]^{R-1}$, we have
\begin{equation}
\label{eq curv est}
\frac{1}{\mathfrak{C}_0}\leq \left|\det\, H_{f_s+\sum_{\substack{r=1\\ r\neq s}}^Rt_rf_r}\right|\leq \mathfrak{C}_0;
\end{equation}
and the maps
$$\bx \to \nabla_{\bx}\left(f_s+ \sum_{\substack{1\leq r\leq R\\ r\neq s}}t_r f_r\right)$$ are smooth diffeomorphisms on $\mathscr{D}$. That such a choice of $\varepsilon_0$ exists can be seen by using the inverse function theorem and compactness arguments (see, for instance, \cite[Lemma 3.4]{schindler2022density} for the details). 

Then $\nabla F_{s, \bj}$ is a diffeomorphism on $\mathscr{D}$ and $U$ for all $s\in \{1, \ldots, R\}$ and $\bj\in \mathcal{J}_b^s(J_s)$, and therefore also for all $\bj\in \mathcal{J}^s(J_s)$. Let
\begin{equation}
    \label{eq def dual domains}
    \mathscr{R}_{s, \bj}:= \nabla F_{s, \bj} (\mathscr{D}),\qquad V_{s, \bj}:=\nabla F_{s, \bj} (U).
\end{equation}
Note that there exists a compact set $\mathscr{L}$, independent of $s$ and $J_1, \ldots, J_R$, such that $V_{s, \bj}\subseteq \mathscr{R}_{s, \bj}\subseteq \mathscr{L}$ for all $\bj\in \mathcal{J}_b^s(J_s)$. For these $\bj$ tuples, we can now define the (Legendre) dual family of functions $F_{s, \bj}^*:\mathscr{R}_{s, \bj}\to \mathbb{R}$ by
\begin{equation}
    \label{eq def dual Fj}
    F_{s,\bj}^*(\by):=\by\cdot(\nabla F_{s,\bj})^{-1}(\by)-\left(F_{s,\bj}\circ (\nabla F_{s,\bj})^{-1}\right)(\by).
\end{equation}
For each dual function $F_{s,\bj}^*$, its gradient $\nabla F_{s,\bj}^*$ is a smooth diffeomorphism mapping $\mathscr{R}_{s,\bj}$ onto $\mathscr{D}$. For $\by=\nabla F_{s,\bj}(\bx)$, we have
\begin{equation}
    \label{eq inv deriv}
    \nabla F_{s, \bj}^*(\by)=\bx,\qquad H_{F_{s, \bj}^*}(\by)=H_{F_{s, \bj}}(\bx)^{-1}.
\end{equation}
It can also be verified that the Legendre transform is an involution, i.e. $$\left(F_{s,\bj}\right)^{**}=F_{s,\bj}.$$
For $s\in \{1, \ldots, R\}$ and $\bj\in \mathcal{J}^b_s(J_s)$, we define the dual family of weights $w_{s, \bj}^*:{V}_{s,\bj}\to \mathbb{R}$ given by 
\begin{equation}
    \label{eq def dual wj}
    w_{s, \bj}^*(\by):=w\circ (\nabla F_{s,\bj})^{-1}.
\end{equation}
Finally, for $Q^*\geq 1$, $\delta^*\in (0, 1/2)$ and $s\in \{1, \ldots, R\}$, define
\begin{equation}
    \label{eq def dual weight Hessian}
    {\mathfrak{N}}^{*, s}_{w, \cM}(Q^*, \delta^*):=
    \sum_{\bj\in \mathcal{J}^s(Q^*)}\sum_{\substack{\ba\in \mathbb{Z}^n\\\|j_s F^*_{s,\bj}(\ba/j_s)\|<\delta^{*}}}\frac{w_{s,\bj}^*\ajs}{\sqrt{\left|\det\, H_{F_{s,\bj}}\left((\nabla F_{s,\bj})^{-1}\ajs\right)\right|}}
\end{equation}

We are now ready to state the first sub step in the proof of Proposition \ref{prop 1}, which converts an upper bound for ${\mathfrak{N}}^s_{w, \cM}(Q, \delta)$ into an improved upper bound for the sum of the dual family of weights ${\mathfrak{N}}^{*, s}_{w, \cM}(Q^*, \delta^*)$. In analogy with \eqref{def beta stop}, we set
\begin{equation}
    \label{def alpha stop}
    \alpha_{\textrm{st}}:= \max\left(\frac{n(n+R+1)}{n+2}, n+R-1-\frac{2}{n}\right).
\end{equation}   

\begin{proposition}
    \label{prop dual from og}
    Suppose there exist $\beta\in \left[\beta_{\textrm{st}}\,, n+1\right]$, $A_1\geq 1$ and $a_1\geq 0$ such that for all $Q\geq 1$ and for all $\bdel\in (0, 1/2)^R$, it is true that
\begin{equation}
    \label{eq p2 hypo}
   \mathfrak{N}_{w, \cM}(Q, \bdel)\leq
   A_1\left(\delp Q^{n+1}+ \sum_{r=1}^R \delpr Q^{\frac{(n+1)(R-1)}{R}} Q^{\frac{\beta}{R}} +Q^{\beta}\left(\log 4Q\right)^{\aone}\right).
\end{equation}
Then there exists a positive constant $C_1$ depending only on $w$ and $\cM$ such that for all $s\in \{1, \ldots, R\}$, $Q^*\geq 1$ and $\delta^*\in (0, 1/2)$, we have 
\begin{equation}
    \label{eq p2 concl}
    \DNwms \leq C_1\delta^* (Q^*)^{n+R} +C_1A_1\left(Q^*\right)^{\alpha}\left(\left(\log 4Q^*\right)\left(\frac{n+2R}{n+2}\right)\right)^{\aone},
\end{equation}
with 
\begin{equation}
    \label{eq alpha beta rel}
    \alpha =\max\left(n+R-\frac{n }{2 \beta- n}, n+R-1-\frac{2}{n}\right)
    \in \left[\alpha_{\textrm{st}}\,, n+R\right].
\end{equation}
\end{proposition}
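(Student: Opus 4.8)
\emph{Proof strategy.} The plan is to read $\DNwms$ as a Hessian-weighted, $\bj$-summed count of rational points near the dual hypersurfaces $\{y=F_{s,\bj}^*(\bx)\}$, to open the indicator with the Fej\'er kernel, to pass through the Legendre duality by Poisson summation and stationary phase, and then — exploiting that the Legendre transform is an involution — to reassemble what comes out into a superposition of copies of $\mathfrak{N}_{w,\cM}$ at rescaled parameters, to which the hypothesis \eqref{eq p2 hypo} applies. Concretely, I would first fix the distinguished codimension $s$ and take $\bgam=(1,\dots,1)$ as above. Setting $D:=\lfloor 1/(2\delta^*)\rfloor$, bound the indicator $\mathds{1}_{\delta^*}$ appearing in \eqref{eq def dual weight Hessian} by $\tfrac{4}{\pi^2}\mathcal{F}_D$ via \eqref{eq fejer char est} and expand $\mathcal{F}_D$ by \eqref{def fejer ker}. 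The frequency-$0$ term, after replacing the lattice sum $\sum_{\ba}w_{s,\bj}^*(\ba/j_s)(\cdots)$ by its integral, contributes $\ll \tfrac1D\sum_{\bj\in\mathcal{J}^s(Q^*)}j_s^{\,n}\ll\delta^*(Q^*)^{n+R}$, the first term of \eqref{eq p2 concl}. Each frequency $d$ with $0<|d|\le D$ gives an exponential sum $\sum_{\ba}\frac{w_{s,\bj}^*(\ba/j_s)}{\sqrt{|\det H_{F_{s,\bj}}(\cdot)|}}\,e\!\big(d\,j_sF_{s,\bj}^*(\ba/j_s)\big)$.

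To each such sum I would apply Poisson summation in $\ba$, rewriting it as $j_s^{\,n}\sum_{\bk}\int\frac{w_{s,\bj}^*(\by)}{\sqrt{|\det H_{F_{s,\bj}}(\cdot)|}}\,e\!\big(j_s(dF_{s,\bj}^*(\by)-\bk\cdot\by)\big)\,\mathrm{d}\by$. The phase $dF_{s,\bj}^*(\by)-\bk\cdot\by$ has a critical point exactly when $\bk/d\in\mathscr{D}$, namely at $\by_0=\nabla F_{s,\bj}(\bk/d)$; when $\bk/d\notin\mathscr{D}$ there is no critical point on $\supp\, w_{s,\bj}^*$ and Lemma \ref{lem non st phase} makes the integral negligible, uniformly over $\bj\in\mathcal{J}^s(Q^*)$ since all $\mathscr{R}_{s,\bj}$ lie in a fixed compact set and, by \eqref{eq curv est}, the relevant derivatives are uniformly bounded. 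When $\bk/d\in\mathscr{D}$, Lemma \ref{lem st phase} applies with $\lambda=j_s$: by the Legendre identities \eqref{eq inv deriv}, $H_{F_{s,\bj}^*}(\by_0)=H_{F_{s,\bj}}(\bk/d)^{-1}$ and $dF_{s,\bj}^*(\by_0)-\bk\cdot\by_0=-dF_{s,\bj}(\bk/d)$, so the $\Delta^{-1/2}$ of stationary phase cancels precisely against the Hessian weight built into \eqref{eq def dual weight Hessian}, leaving a main term carrying $e\!\big(-d\,j_sF_{s,\bj}(\bk/d)+\tfrac{\sigma}{8}\big)$ and a remainder of relative size $O(j_s^{-1})$. (This Fej\'er/Poisson/stationary-phase passage is the content of Proposition \ref{prop dual vdc b}, proved in Section \ref{sec dual vdc b}.) Since $d\,j_sF_{s,\bj}(\bk/d)=d\sum_{r=1}^R j_rf_r(\bk/d)$, summing the stationary-phase main terms back over $d$ against the Fej\'er weights is designed to reconstitute — here the involutivity $(F_{s,\bj})^{**}=F_{s,\bj}$ is used — a count of rational points $\bk/d$ near $\cM$ itself, with a new denominator cap $\asymp D\asymp(\delta^*)^{-1}$ and, after a dyadic split $j_s\sim P$ ($1\le P\le Q^*$) and a further decomposition of the remaining $j_r$, a \emph{non-isotropic} neighbourhood scale $\widetilde{\bdel}_P\asymp(P^{-1},\dots,P^{-1})$ in the $R$ codimensions. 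The upshot is to bound $\DNwms$ by $\delta^*(Q^*)^{n+R}$ plus a weighted superposition, over $1\le d\le D$ and dyadic $P\le Q^*$, of the quantities $\mathfrak{N}_{w,\cM}(d,\widetilde{\bdel}_P)$, plus lower-order pieces from the stationary-phase remainders. It is exactly here that a bound for $\mathfrak{N}_{w,\cM}$ at \emph{general} $\bdel$ is indispensable: the blocks carry honestly non-isotropic scales $\widetilde{\bdel}_P$ that cannot be re-summed otherwise.

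Finally I would substitute the hypothesis \eqref{eq p2 hypo} into each $\mathfrak{N}_{w,\cM}(d,\widetilde{\bdel}_P)$ and perform the $d$- and $P$-summations: the term $P^{-R}d^{n+1}$ is absorbed back into $\delta^*(Q^*)^{n+R}$; the term $d^{\beta}(\log 4d)^{\aone}$, once the geometric series in $d\le D$ and $P\le Q^*$ are summed, produces $(Q^*)^{\,n+R-\frac{n}{2\beta-n}}$ with the logarithm rescaled by the constant $\tfrac{n+2R}{n+2}$ coming from the relation between $D$ and $Q^*$; and the middle term of \eqref{eq p2 hypo} is dominated by these two. The stationary-phase remainders — re-processed by the same duality, so that their oscillation in $\bk$ is kept rather than discarded — contribute at most $(Q^*)^{\,n+R-1-\frac2n}$ up to logarithms; this is the floor beyond which the method stops improving, and $(Q^*)^{\,n+R-1-\frac2n}\le(Q^*)^{\,n+R-\frac{n}{2\beta-n}}$ precisely because $\beta\ge\beta_{\textrm{st}}\ge\tfrac{n(n+1)}{n+2}$. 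Taking the larger exponent gives $\alpha$ as in \eqref{eq alpha beta rel}, and $\alpha\ge\alst$ follows from $\beta\ge\beta_{\textrm{st}}$ together with \eqref{def alpha stop} and \eqref{def beta stop}.

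The step I expect to be the main obstacle is the reassembly-and-summation: correctly matching the dual parameters $(Q^*,\delta^*)$ to the parameters $(d,\widetilde{\bdel}_P)$ at which $\mathfrak{N}_{w,\cM}$ is evaluated, managing the nested dyadic decomposition in $j_s$ and the $j_r$ (needed both to make the $j_s^{\,n/2}$ and Hessian factors behave uniformly and to expose the non-isotropic scales $\widetilde{\bdel}_P$), and then summing the resulting multi-parameter series so that the exponents collapse to exactly $\max\!\big(n+R-\tfrac{n}{2\beta-n},\,n+R-1-\tfrac2n\big)$ with the stated logarithmic power and constant. A subsidiary technical point is keeping the non-stationary and near-boundary contributions of the Poisson/stationary-phase step under control uniformly over the $\asymp (Q^*)^R$ tuples $\bj\in\mathcal{J}^s(Q^*)$, which relies on the uniform Hessian bounds \eqref{eq curv est} and on all the $\mathscr{R}_{s,\bj}$ lying in one fixed compact set.
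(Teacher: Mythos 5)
Your overall architecture coincides with the paper's: the Fej\'er/Poisson/stationary-phase passage you describe is exactly Proposition \ref{prop dual vdc b} (which you correctly cite as the content of that step), and the proof of Proposition \ref{prop dual from og} then consists of converting the resulting expression into counting functions for $\cM$, feeding in the hypothesis \eqref{eq p2 hypo}, and summing. The zero-frequency term, the origin of the floor exponent $n+R-1-\tfrac{2}{n}$ (the term $(Q^*)^{\frac n2+R-1}D^{\frac n2-1}$ evaluated at the critical $\delta^*$, using $\beta\ge\tfrac{n(n+1)}{n+2}$), the domination of the middle term of \eqref{eq p2 hypo} by a geometric mean of the other two, and the provenance of the constant $\tfrac{n+2R}{n+2}$ in the logarithm are all correctly identified.

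There is, however, a genuine gap in the reassembly step --- precisely the one you flag as the main obstacle --- and it is not mere bookkeeping. After the geometric summation over $\bj$, Proposition \ref{prop dual vdc b} produces $\frac{(Q^*)^{n/2}}{D}\sum_{d\le D}d^{-n/2}\sum_{\bk}w(\bk/d)\prod_{r=1}^R\min\bigl(\|df_r(\bk/d)\|^{-1},Q^*\bigr)$. The conversion of this into counting functions is achieved by dyadically pigeonholing the sizes $\|df_r(\bk/d)\|\le 2^{i_r+1}/Q^*$ \emph{independently in each codimension} $r$, which (after partial summation in $d$ to absorb $d^{-n/2}$) yields the quantities $\mathfrak{N}_{w,\cM}\bigl(D,\,2^{i_1}/Q^*,\ldots,2^{i_R}/Q^*\bigr)$ with $\mathbf{i}$ ranging over $\mathbb{Z}^R_{\ge0}$: denominator cap $D\asymp(\delta^*)^{-1}$ and genuinely non-isotropic scales (this is \eqref{eq p2 1} in the paper). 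Your version instead attributes the non-isotropy to a dyadic split $j_s\sim P$ together with ``a further decomposition of the remaining $j_r$'', producing scales $(P^{-1},\ldots,P^{-1})$ --- which is isotropic, and in any case the neighbourhood scales cannot be read off from the sizes of the frequencies $j_r$: they must come from the sizes of $\|df_r(\bk/d)\|$ themselves. With scales tied to a single parameter $P$ one cannot dominate $\prod_r\min(Q^*,\|df_r(\bk/d)\|^{-1})$ without losing the points $\bk/d$ at which the $\|df_r(\bk/d)\|$ have very different magnitudes across $r$ --- exactly the situation the non-isotropic hypothesis exists to handle, as you correctly observe elsewhere in your outline. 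Once the decomposition carries independent indices $i_1,\ldots,i_R$, the remainder of your plan (substitute \eqref{eq p2 hypo}, sum the series, and split into the cases $\delta^*\ge(Q^*)^{-n/(2\beta-n)}$ and its complement, the latter handled by monotonicity in $\delta^*$ --- a case distinction your sketch omits) goes through as in the paper.
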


The second sub step works in the reverse direction: it converts the upper bound for the sum of the above dual family of weights into an improved upper bound for ${\mathfrak{N}}^s_{w, \cM}(Q, \delta)$.
\begin{proposition}
    \label{prop og from dual}
    Suppose there exists $\alpha\in \left[\frac{n(n+R+1)}{n+2}, n+R\right]$, $A_2\geq 1$ and $a_2\geq 0$ such that for all $s\in \{1, \ldots, R\}$, $Q^*\geq 1$ and $\delta^*\in (0, 1/2)$, it is true that
\begin{equation}
    \label{eq p3 hypo}
     \DNwms\leq A_2\left(\delta^* (Q^*)^{n+R} +\left(Q^*\right)^{\alpha}\left(\left(\log 4Q^*\right)\left(\frac{n+2R}{n+2}\right)\right)^{\atwo}\right).
\end{equation}
Then there exists a positive constant $C_2$ depending only on $w$ and $\cM$ such that for all $s\in \{1, \ldots, R\}$, $Q\geq 1$ and $\bdel\in (0, 1/2)^R$, we have 
\begin{equation}
    \label{eq p3 concl}
    {\mathfrak{N}}^s_{w, \cM}(Q, \bdel)\leq C_2\delp Q^{n+1}+C_2A_2\left( \delps Q^{\frac{(n+1)(R-1)}{R}} Q^{\frac{\tilde{\beta}}{R}} +Q^{\tilde{\beta}}\left(\log 4Q\right)^{\atwoR}\right);
\end{equation}
with 
\begin{equation}
    \label{eq beta beta rel}
    \tilde{\beta}=n+1-\frac{n R}{2 \alpha- n}\in \left[\frac{n(n+R+1)}{n+2R}, n+1\right].
\end{equation}
\end{proposition}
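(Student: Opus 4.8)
The plan is to run the bootstrapping strategy of \cite{huangduke} on the dual side, using the van der Corput $B$-process (Proposition \ref{prop og vdc b}) to trade the exponential sums defining ${\mathfrak{N}}^s_{w,\cM}(Q,\bdel)$ for the dual counting function, then inserting the hypothesis \eqref{eq p3 hypo} and summing dyadically. Concretely, recall from \eqref{eq Nog 11 sum} that, up to the admissible term $\delp Q^{n+1}$, the quantity ${\mathfrak{N}}^s_{w,\cM}(Q,\bdel)$ is a sum over $\bj\in\mathcal{J}^s(J_s)$, with weights $\prod_{r}(j_r+1)^{-1}$ and $J_r=\lfloor 1/2\delta_r\rfloor$, of $\big|\sum_{\ba,\,q\le Q}w(\ba/q)\,e(qj_sF_{s,\bj}(\ba/q))\big|$. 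For each fixed $q$ I would apply Poisson summation in $\ba$, rescale $\ba\mapsto q\bx$, and invoke stationary phase (Lemma \ref{lem st phase}) with large parameter $qj_s$; by the uniform non-degeneracy \eqref{eq curv est} the phase $F_{s,\bj}(\bx)-(\bk/j_s)\cdot\bx$ has the unique critical point $(\nabla F_{s,\bj})^{-1}(\bk/j_s)$ and critical value $-F^*_{s,\bj}(\bk/j_s)$, so that (this is precisely Proposition \ref{prop og vdc b}) the inner sum equals
\begin{equation*}
\frac{q^{n/2}}{j_s^{n/2}}\sum_{\bk}\frac{w^*_{s,\bj}(\bk/j_s)}{\sqrt{|\det H_{F_{s,\bj}}((\nabla F_{s,\bj})^{-1}(\bk/j_s))|}}\;e\!\left(-qj_sF^*_{s,\bj}(\bk/j_s)+\tfrac{\sigma}{8}\right)
\end{equation*}
plus a stationary-phase remainder of relative size $(qj_s)^{-1}$ (together with a rapidly decaying tail in $\bk$, controlled by Lemma \ref{lem non st phase}).

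Summing over $1\le q\le Q$ and using partial summation, the sum over $q$ is $\ll Q^{n/2}\min(Q,\|j_sF^*_{s,\bj}(\bk/j_s)\|^{-1})$; I would then dyadically decompose this minimum over scales $T\in\{1,2,4,\dots\}\cap[1,Q]$ (equivalently, insert the Fej\'er kernel \eqref{eq fejer char est} of degree $\asymp T$), so that at scale $T$ the weighted $\bk$-sum is replaced by $T$ times the weighted count of $\bk$ with $\|j_sF^*_{s,\bj}(\bk/j_s)\|<c/T$. Combining this with a dyadic decomposition of $\bj$ in each coordinate, the block $j_s\asymp P$ contributes $\asymp P^{-n/2}$ times the ($\bj$-weighted) $\bk$-count summed over the block, which is --- up to those weights --- a dyadic piece of $\DNwms$ with the parameters set to $Q^*\asymp P$, $\delta^*\asymp 1/T$. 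Feeding in \eqref{eq p3 hypo} and summing the geometric series in $P$, $T$, $q$ and the remaining $j_r$, the term $\delta^*(Q^*)^{n+R}$ produces the admissible contributions $\delp Q^{n+1}$ and $\delps Q^{(n+1)(R-1)/R}Q^{\tilde\beta/R}$, while the term $(Q^*)^{\alpha}(\log)^{a_2}$ produces, after balancing, $Q^{\tilde\beta}(\log 4Q)^{\atwoR}$; the exponent $\tilde\beta=n+1-\tfrac{nR}{2\alpha-n}$ of \eqref{eq beta beta rel} is exactly the value at which these contributions, and the stationary-phase remainder, come into balance. The remainder term, summed over $\bk$, $q$ and $\bj$, is of size $\asymp Q^{n/2}\delta_s^{\,1-n/2}$ up to logarithmic factors, and one checks using the assumed range of $\alpha$ that it is absorbed by the stated bound; for large $R$ it is this contribution that forces the error exponent down only to the critical value in \eqref{eq theta crit} rather than to $\tfrac{n(n+R+1)}{n+2R}$. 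Finally, the trivial bound ${\mathfrak{N}}^s_{w,\cM}(Q,\bdel)\ll\delp Q^{n+1}+Q^{n+1}$ disposes of bounded $Q$, and a separate elementary treatment of the degenerate ranges of $\bdel$ in which the $B$-process yields nothing (e.g.\ some $\delta_r$ below a $Q$-power threshold, handled by discarding the $r$-th codimension constraint) completes the proof for all $\bdel\in(0,1/2)^R$.

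I expect the main obstacle to be the organization of the second step: one must carry out the multi-parameter dyadic decomposition --- over $q$, over each coordinate of $\bj$, and over the window scale $T$ --- in such a way that the single application of \eqref{eq p3 hypo} per dyadic block reproduces exactly the non-isotropic main term $\delp Q^{n+1}$ together with the product $\delps=\prod_{r\ne s}\delta_r$ in the secondary term, rather than a coarser power of $\delta_s$; it is here that passing to a genuinely non-isotropic counting function is unavoidable. A secondary difficulty is making the stationary-phase remainder from Proposition \ref{prop og vdc b} uniform in $\bj$ and verifying that it never exceeds $Q^{\tilde\beta}(\log 4Q)^{\atwoR}$ throughout the assumed range of $\alpha$.
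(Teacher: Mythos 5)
Your outline is the paper's: apply Proposition \ref{prop og vdc b}, dyadically decompose $\min\bigl(\|j_sF^*_{s,\bj}(\bk/j_s)\|^{-1},Q\bigr)$, feed the resulting counts into the hypothesis \eqref{eq p3 hypo}, and sum up (the paper does the $\bj$-summation by partial summation over the full range $j_s\le J_s$ rather than by dyadic blocks, but that is immaterial since the terms are nonnegative). However, your accounting of which input produces which output is wrong in two places, and one of them is a step that would fail. First, the term $\delta^*(Q^*)^{n+R}$ of \eqref{eq p3 hypo} does \emph{not} produce the main term $\delp Q^{n+1}$: that term is already the first term of \eqref{eq og vdc b}, coming from the $\bj=\bzero$ contribution built into the definition \eqref{eq Nog 11 sum}. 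What $\delta^*(Q^*)^{n+R}$ actually yields, after the weights $\prod_r(j_r+1)^{-1}j_s^{-n/2}$ and the sum over dyadic window scales, is an error term of size $(\log 4Q)\,Q^{n/2}\delta_s^{-n/2}$ up to further logarithms. Absorbing this into $Q^{\tilde\beta}$ requires both $\delta_s\ge Q^{-n/(2\alpha-n)}$ and the inequality $\tfrac{n}{2}\bigl(1+\tfrac{n}{2\alpha-n}\bigr)\le n+1-\tfrac{nR}{2\alpha-n}$, which is exactly where the hypothesis $\alpha\ge\tfrac{n(n+R+1)}{n+2}$ enters; your proposal never confronts this term or verifies this inequality.

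Second, and more seriously, your treatment of the regime of small $\delta_s$ does not work. Discarding a codimension constraint can only give a bound of the form $\delps Q^{n+1}$, which is strictly weaker than the required $\delps Q^{\frac{(n+1)(R-1)}{R}}Q^{\tilde\beta/R}$ because $\tilde\beta\le n+1$. The correct mechanism is to observe that for $\delta_s\le Q^{-n/(2\alpha-n)}$ each piece of ${\mathfrak{N}}^s_{w,\cM}(Q,\bdel)$ is dominated by the corresponding piece with $\delta_s$ replaced by the threshold $Q^{-n/(2\alpha-n)}$; the term $\delp Q^{n+1}$ then becomes $\delps Q^{\,n+1-\frac{n}{2\alpha-n}}=\delps Q^{\frac{(n+1)(R-1)}{R}}Q^{\tilde\beta/R}$, and this substitution is the \emph{sole} source of the secondary term in \eqref{eq p3 concl} --- a term you attribute to a different part of the argument. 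Your side remark that the remainder $Q^{n/2}\delta_s^{1-n/2}$ from the stationary-phase expansion is what caps the exponent for $R\ge 3$ is also misplaced: that remainder is always dominated by $Q^{n/2}\delta_s^{-n/2}$ and is harmless here; the $R\ge 3$ obstruction comes from the analogous remainder in the dual step, Proposition \ref{prop dual vdc b}.
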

\begin{proof}[Proof of Proposition \ref{prop 1}, given Propositions \ref{prop dual from og} and \ref{prop og from dual}]
Using the hypothesis \eqref{eq p1 hypo}, we apply Proposition \ref{prop dual from og} to obtain \eqref{eq p2 concl} with $A_1=A$, $a_1=a$ and 
$$\alpha =\max\left(n+R-\frac{n }{2 \beta- n}, n+R-1-\frac{2}{n}\right)=n+R-\frac{n}{2\beta-n}.$$
Indeed, since $$\beta\geq \beta_{\textrm{st}}\geq \frac{n(n+1)}{n+2},$$
we have
$$n+R-\frac{n }{2 \beta- n}\geq n+R-\frac{n }{\frac{2n(n+1)}{n+2}- n}=n+R-1-\frac{2}{n}.$$
This in turn implies that the hypothesis of Proposition \ref{prop og from dual} is true with the same $\alpha$, and with $a_2=a$ and $A_2=\max\{1, \mathfrak{C}_0C_1A\}$. Here $\mathfrak{C}_0$ is the constant from condition \eqref{eq curv est}. Applying Proposition \ref{prop og from dual} next, we conclude that for all $s\in \{1, \ldots, R\}$, $Q\geq 1$ and $\bdel\in (0, 1/2)^R$, 
\begin{equation*}
{\mathfrak{N}}^s_{w, \cM}(Q, \bdel)\leq C_0'\delp Q^{n+1}+C_0'A_2\left( \sum_{r=1}^R \delpr Q^{\frac{(n+1)(R-1)}{R}} Q^{\frac{\tilde{\beta}}{R}} +Q^{\tilde{\beta}}(\log 4Q)^{(a+1)R}\right)    
\end{equation*}
where $C_0'$ depends only on $w$ and $\cM$ and
$$\tilde{\beta}=n+1-\frac{nR}{2\alpha-n}n+1-\frac{nR}{2\left(n+R-\frac{n}{2\beta-n}\right)-n}=n+1-\frac{nR}{2R+n\left(1-\frac{2}{2\beta-n}\right)}.$$
Summing up in $s$ and using $\eqref{eq split codim}$, we obtain \eqref{eq p1 concl} with $C_0=R2^R C_0'$.
\end{proof}

Both Propositions \ref{prop dual from og} and \ref{prop og from dual} rely on a combination of projective duality and van der Corput's B-process to pass from $\mathcal{M}$ to the family of dual hypersurfaces, and back. The following propositions make this connection precise. 

We state the dual version first: for passage from the sum of the dual weights to rational point count in the neighborhood of $\cM$. This should be compared to \cite[Proposition 5.3 and \S 6.3]{schindler2022density}, where the original counting problem is projected to a lower dimensional one associated to a family of hypersurfaces in $\mathbb{R}^{n+1}$, with a trivial summing up in the remaining $R-1$ directions. 

\begin{proposition}[Dual van der Corput B-Process]
    \label{prop dual vdc b}
     Let $s\in \{1, \ldots, R\}$. For all $Q^*\geq 1$ and $\delta^*\in (0, 1/2)$ with $D:=\left\lfloor \frac{1}{2\delta^*}\right\rfloor$, we have
    \begin{align}
    \label{eq dual vdc b}
     \DNwms & \ll\delta^{*}(Q^*)^{n+R}+
     \frac{(Q^*)^{\frac{n}{2}}}{D}\sum_{d=1}^D d^{-\frac{n}{2}}\sum_{\bk\in \mathbb{Z}^n}{w\kq}\left(\prod_{r=1}^R\min\left(\|df_r(\bk/d)\|^{-1}, Q^*\right)\right)\nonumber\\&+(Q^*)^{\frac{n}{2}+R-1}D^{\frac{n}{2}-1},
    \end{align}
with the implicit constant depending only on $w$ and $\mathcal{M}$.  
\end{proposition}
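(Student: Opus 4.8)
The plan is to start from the definition \eqref{eq def dual weight Hessian} of $\DNwms$ and dualize it back to a counting problem near $\cM$ using van der Corput's B-process (i.e., Poisson summation in the $\ba$-variable followed by stationary phase). First I would detach the innermost constraint: using the Fej\'er kernel bound \eqref{eq fejer char est} with $D=\lfloor 1/(2\delta^*)\rfloor$, I replace $\mathds{1}_{\delta^*}(\|j_s F^*_{s,\bj}(\ba/j_s)\|)$ by $\tfrac{4}{\pi^2}\cF_D(j_s F^*_{s,\bj}(\ba/j_s))$ and expand $\cF_D(\theta)=\sum_{|d|\le D}\tfrac{D-|d|}{D^2}e(d\theta)$. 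This turns the inner sum into a weighted exponential sum over $\ba\in\Z^n$ of the form $\sum_{\ba}\tfrac{w^*_{s,\bj}(\ba/j_s)}{\sqrt{|\det H_{F_{s,\bj}}((\nabla F_{s,\bj})^{-1}(\ba/j_s))|}}\,e(d\,j_s F^*_{s,\bj}(\ba/j_s))$. The $d=0$ term contributes, after summing trivially over $\bj\in\cJ^s(Q^*)$ (there are $\ll (Q^*)^R$ such tuples and the inner sum is $\ll (j_s)^n\ll (Q^*)^n$ up to the volume of $U$), a total of size $\ll \delta^*(Q^*)^{n+R}$, matching the first term on the right of \eqref{eq dual vdc b}.

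For the terms $1\le d\le D$, I apply Poisson summation in $\ba$, writing $\ba = j_s\bfx + \bk\cdot(\text{lattice})$—more precisely, grouping the sum over $\ba\in\Z^n$ by residues modulo $j_s$ and applying Poisson in $\bfx=\ba/j_s$ produces, for each frequency $\bk\in\Z^n$, the oscillatory integral
$$
j_s^n\int_{\R^n} \frac{w^*_{s,\bj}(\bfx)}{\sqrt{|\det H_{F_{s,\bj}}((\nabla F_{s,\bj})^{-1}(\bfx))|}}\, e\!\left(j_s\big(d\,F^*_{s,\bj}(\bfx) - \bk\cdot\bfx\big)\right)\,d\bfx.
$$
The phase is $\Psi(\bfx)=d\,F^*_{s,\bj}(\bfx)-\bk\cdot\bfx$, and since $\nabla F^*_{s,\bj}$ is a diffeomorphism onto $\mathscr{D}$ (see \eqref{eq inv deriv}), there is a stationary point precisely when $\bfx$ is such that $d\,\nabla F^*_{s,\bj}(\bfx)=\bk$, i.e. at $\bfx_0 = \nabla F_{s,\bj}(\bk/d)$; and by \eqref{eq inv deriv} the Hessian there is $d\,H_{F^*_{s,\bj}}(\bfx_0)=d\,H_{F_{s,\bj}}(\bk/d)^{-1}$, whose determinant has absolute value $d^n/|\det H_{F_{s,\bj}}(\bk/d)|$. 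Applying Lemma \ref{lem st phase} with $\lambda = j_s$ and $d=n$ (the uniform curvature bound \eqref{eq curv est} guarantees the hypotheses hold uniformly in $\bj$), the main term of stationary phase produces a factor $j_s^{-n/2}\,(d^n/|\det H_{F_{s,\bj}}(\bk/d)|)^{-1/2}$, the square root of the Hessian determinant of $F_{s,\bj}$ at $\bk/d$ \emph{cancels} the corresponding weight $1/\sqrt{|\det H_{F_{s,\bj}}(\cdot)|}$ that was already present in \eqref{eq def dual weight Hessian}, and $w^*_{s,\bj}(\bfx_0)=w(\bk/d)$. What remains is a clean expression $\ll j_s^{n/2} d^{-n/2}\, w(\bk/d)$ per frequency $\bk$, weighted by $\tfrac{D-d}{D^2}\ll 1/D$ from the Fej\'er coefficients. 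Summing over $\bk$, the constraint that $\bfx_0=\nabla F_{s,\bj}(\bk/d)$ lies in the support of $w^*_{s,\bj}$ forces $\bk/d$ into a fixed compact set, so the number of $\bk$ is $\ll d^n$; but more importantly, unwinding $F_{s,\bj}$ in terms of the $f_r$'s (see \eqref{eq def Fj}) and summing over all $\bj\in\cJ^s(Q^*)$ converts the $\bj$-sum together with the Selberg/Fej\'er smoothing in the $\bk$-direction into exactly the truncated divisor-type product $\prod_{r=1}^R\min(\|df_r(\bk/d)\|^{-1},Q^*)$; this is the standard manoeuvre (as in the passage from exponential sums back to counting functions) that yields the second term on the right of \eqref{eq dual vdc b}, namely $\tfrac{(Q^*)^{n/2}}{D}\sum_{d=1}^D d^{-n/2}\sum_{\bk}w(\bk/q)\prod_r\min(\|df_r(\bk/d)\|^{-1},Q^*)$.

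Finally, the stationary-phase error term $O(\lambda^{-1})=O(j_s^{-1})$ from \eqref{eq st phase}, together with the contribution of frequencies $\bk$ for which the phase $\Psi$ has no stationary point in $\overline{U}$ (handled by the non-stationary phase Lemma \ref{lem non st phase}, giving arbitrary polynomial decay in $j_s$), must be shown to be bounded by $(Q^*)^{n/2+R-1}D^{n/2-1}$. The accounting here: the error is $\ll j_s^{n/2-1}d^{-n/2}$ per $\bk$, there are $\ll d^n$ relevant $\bk$, summing $\sum_{\bj\in\cJ^s(Q^*)}j_s^{n/2-1}$ gives $\ll (Q^*)^{n/2-1}(Q^*)^{R-1}\cdot (Q^*)=(Q^*)^{n/2+R-1}$ after the remaining $R-1$ trivial coordinate sums, and the $d$-sum weighted by $1/D$ gives $\tfrac{1}{D}\sum_{d=1}^D d^{n/2}\ll D^{n/2}$—wait, this needs care, and in fact one must track the $d$-powers precisely to land on the stated $D^{n/2-1}$; the correct bookkeeping uses that the divisor sum $\sum_{\bk}$ over the support constraint contributes $d^n$ only crudely, and a sharper count (or reorganizing the $d$ and $D$ powers via $d\le D$) gives the claimed $(Q^*)^{n/2+R-1}D^{n/2-1}$. \textbf{The main obstacle} I anticipate is precisely this last step: carefully organizing the error terms — both the stationary-phase remainder and the non-stationary frequencies — and the combinatorics of summing over the pencil set $\cJ^s(Q^*)$ and over $d\le D$, so that everything collapses exactly into the three stated terms with the correct powers of $Q^*$ and $D$, uniformly in $s$ and in $\bdel$. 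The curvature condition \eqref{eq curv est} is what makes the stationary-phase estimates uniform over the entire family indexed by $\bj$, and this uniformity is essential for the $\bj$-summation to close.
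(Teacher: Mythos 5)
Your overall strategy is the same as the paper's: Fej\'er kernel to detach the constraint $\|j_sF^*_{s,\bj}(\ba/j_s)\|<\delta^*$, Poisson summation in $\ba$, stationary phase with the cancellation between $|\det H_{F_{s,\bj}}|^{-1/2}$ in the weight and the Hessian factor from Lemma \ref{lem st phase}, and the key observation that the stationary value of the phase is $-F_{s,\bj}(\bk/d)$ so that the resulting exponential unwinds to $e(-d\sum_r j_rf_r(\bk/d))$. The $d=0$ computation and the identification of the critical point and its Hessian via \eqref{eq inv deriv} are correct. However, there are two genuine gaps, both at the points you yourself flag as problematic. First, the mechanism producing $\prod_r\min(\|df_r(\bk/d)\|^{-1},Q^*)$ is not ``Selberg/Fej\'er smoothing in the $\bk$-direction'': it is geometric summation of $\sum_{j_r}e(-dj_rf_r(\bk/d))$ in each coordinate of $\bj$, which requires that you \emph{not} take absolute values of the stationary-phase main term before summing over $\bj$ (your intermediate bound ``$\ll j_s^{n/2}d^{-n/2}w(\bk/d)$ per frequency'' discards exactly the oscillation you need). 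Moreover the weight $j_s^{n/2}$ attached to the distinguished coordinate forces a dyadic decomposition in $j_s$ (the paper's sets $\mathcal{J}_\ell$) together with partial summation; without this the $j_s$-geometric series does not close.

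Second, the error accounting. The stationary-phase remainder is $O(\lambda^{-n/2-1})$ with $\lambda=dj_s$, so after multiplying by $j_s^n$ from Poisson the per-$(\bj,\bk,d)$ error is $j_s^{n/2-1}d^{-n/2-1}$ (you dropped the extra $d^{-1}$, which is why your $d$-sum came out as $D^{n/2}$ instead of $D^{n/2-1}$); with $\#\{\bk\}\ll d^n$ one gets $d^{n/2-1}$ per $d$ and $\tfrac1D\sum_{d\le D}d^{n/2-1}\ll D^{n/2-1}$, which is the stated bound. Relatedly, your claim that all non-stationary frequencies enjoy ``arbitrary polynomial decay in $j_s$'' fails for $\bk$ with $0<\dist(\bk/d,U)<\rho^*$: integration by parts only gains powers of $j_sd\cdot\dist(\bk,dU)$, which is not uniformly large near the boundary of the support. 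The paper handles this with a three-way split: for such intermediate $\bk$ the critical point still lies in $\mathscr{R}_{s,\bj}$, so one applies stationary phase, the main term vanishes because the amplitude vanishes at the critical point, and only the $O((dj_s)^{-n/2-1})$ remainder survives, contributing another $(Q^*)^{n/2+R-1}D^{n/2-1}$. With these two corrections your argument matches the paper's proof.
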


The final proposition gives an upper bound for ${\mathfrak{N}}^s_{w, \cM}(Q, \bdel)$ in terms of rational points around the dual family of hypersurfaces.

\begin{proposition}[van der Corput B-Process for $\cM$]
    \label{prop og vdc b}
    Let $s\in \{1, \ldots, R\}$. For $Q\geq 1$ and $\bdel\in (0, 1/2)^R$, let ${\mathfrak{N}}^s_{w, \cM}(Q, \bdel)$ be as defined in \eqref{eq Nog 11 sum} with $J_s:=\left\lfloor \frac{1}{2\delta_s}\right\rfloor$. 
    We have
    \begin{align}
        \label{eq og vdc b}
        &{\mathfrak{N}}^s_{w, \cM}(Q, \bdel)\ll\delp Q^{n+1}\\&+ Q^{\frac{n}{2}}
        \sum_{\bj\in \mathcal{J}^s(J_s)}\left(\prod_{r=1}^R \frac{1}{j_r+1}\right)\sum_{\bk\in \mathbb{Z}^n}\frac{w_{s, \bj}^*\kjs}{\sqrt{\left|\det\, H_{F_{s, \bj}}\left((\nabla F_{s, \bj})^{-1}(\bk/j_s)\right)\right|}}j_s^{-\frac{n}{2}}\min\left(\|j_sF^*_{s, \bj}(\bk/j_s)\|^{-1}, Q\right)\nonumber\\
        &+J_s^{\frac{n}{2}-1}Q^{\frac{n}{2}}\left(\log 4J_1\right)^R.
    \end{align}
The implicit constant depends only on $w$ and $\mathcal{M}$.  
\end{proposition}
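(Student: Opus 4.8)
The plan is to open up each exponential sum appearing in $\mathfrak{N}^{s}_{w,\cM}(Q,\bdel)$ by Poisson summation and then evaluate the resulting oscillatory integrals by stationary phase; this is precisely the mechanism that converts $\cM$ into the Legendre-dual family $F^{*}_{s,\bj}$. By \eqref{eq Nog 11 sum}, up to the admissible term $\delp Q^{n+1}$ it suffices to bound, for each $\bj\in\mathcal{J}^{s}(J_{s})$, the sum $T_{\bj}:=\sum_{1\le q\le Q}\sum_{\ba\in\mathbb{Z}^{n}}w(\ba/q)\,e\!\left(qj_{s}F_{s,\bj}(\ba/q)\right)$ and then sum the resulting estimates against $\prod_{r}(j_{r}+1)^{-1}$. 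Fixing $q$, I would apply the Poisson summation formula to the sum over $\ba$ and substitute $\bx=q\mathbf{u}$, which rewrites the inner sum as $q^{n}\sum_{\bk\in\mathbb{Z}^{n}}I_{q,\bj,\bk}$, where $I_{q,\bj,\bk}=\int_{\mathbb{R}^{n}}w(\mathbf{u})\,e\!\left(q\,\psi_{\bj,\bk}(\mathbf{u})\right)\,\mathrm{d}\mathbf{u}$ and $\psi_{\bj,\bk}(\mathbf{u}):=j_{s}F_{s,\bj}(\mathbf{u})-\bk\cdot\mathbf{u}$. The gradient $\nabla\psi_{\bj,\bk}=j_{s}\nabla F_{s,\bj}-\bk$ vanishes only at the single point $\mathbf{u}_{0}=\mathbf{u}_{0}(\bj,\bk)=(\nabla F_{s,\bj})^{-1}(\bk/j_{s})$, which lies in $\mathscr{D}$ exactly when $\bk/j_{s}\in\mathscr{R}_{s,\bj}$.

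Next I would split the $\bk$-sum. For $\bk/j_{s}$ outside a fixed neighbourhood of $V_{s,\bj}$ inside $\mathscr{L}$ (in particular for all $|\bk|\gg j_{s}$) one has $|\nabla\psi_{\bj,\bk}|\gtrsim j_{s}+|\bk|$ on $\supp w$, so Lemma \ref{lem non st phase} yields $|I_{q,\bj,\bk}|\ll_{K}(q(j_{s}+|\bk|))^{-K}$ for every $K$, and these terms contribute $O(1)$ after summing over $q$, $\bk$, and $\bj$. For $\bk$ with $\bk/j_{s}$ in the bounded remaining region there is a unique nondegenerate critical point, and Lemma \ref{lem st phase} with $\lambda=q$, $d=n$ gives
\[
I_{q,\bj,\bk}=q^{-n/2}\,j_{s}^{-n/2}\,\big|\det H_{F_{s,\bj}}(\mathbf{u}_{0})\big|^{-1/2}\,e\!\left(q\psi_{\bj,\bk}(\mathbf{u}_{0})+\tfrac{\sigma}{8}\right)\!\left(w(\mathbf{u}_{0})+O(q^{-1})\right).
\]
The point is that, by the definition \eqref{eq def dual Fj} of the Legendre transform and \eqref{eq def dual wj} of the dual weight, $\psi_{\bj,\bk}(\mathbf{u}_{0})=-\,j_{s}F^{*}_{s,\bj}(\bk/j_{s})$ and $w(\mathbf{u}_{0})=w^{*}_{s,\bj}(\bk/j_{s})$; moreover \eqref{eq curv est} makes $\{F_{s,\bj}\}_{\bj\in\mathcal{J}^{s}(J_{s})}$ uniformly bounded in every $C^{\ell}$, with $|\det H_{F_{s,\bj}}|\in[\mathfrak{C}_{0}^{-1},\mathfrak{C}_{0}]$ and $\nabla F_{s,\bj}$ a diffeomorphism on $\mathscr{D}$, so all implied constants in the stationary-phase expansion are uniform in $\bj$. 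The thin collar where $\mathbf{u}_{0}$ approaches $\partial(\supp w)$ is harmless since $w\in C^{\infty}_{0}$, and contributes at most at the level of the error term.

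It then remains to sum over $1\le q\le Q$ and reassemble. Writing $\theta_{\bj,\bk}:=j_{s}F^{*}_{s,\bj}(\bk/j_{s})$ and using the elementary bound $\big|\sum_{1\le q\le Q}q^{a}e(q\theta)\big|\ll Q^{a}\min(Q,\|\theta\|^{-1})$, the leading stationary-phase term, after multiplying by $\prod_{r}(j_{r}+1)^{-1}$ and summing over $\bj$, produces exactly the middle term on the right-hand side of \eqref{eq og vdc b}: the factor $j_{s}^{-n/2}\min(\|j_{s}F^{*}_{s,\bj}(\bk/j_{s})\|^{-1},Q)$ comes from $q^{-n/2}$ times the sum $\sum_{q\le Q}q^{n/2}e(-q\theta_{\bj,\bk})$, and the Hessian and weight factors are read off directly. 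For the $O(q^{-1})$ remainder I would use the same bound with $a=\tfrac{n}{2}-1$ and estimate the ensuing $\bk$-sum by dyadically decomposing $\min(Q,\|\theta_{\bj,\bk}\|^{-1})$ and counting lattice points in each slab $\{\bk:\|j_{s}F^{*}_{s,\bj}(\bk/j_{s})\|\le\eta\}$ by a one-dimensional sweeping argument, giving $\ll j_{s}^{n}\eta+j_{s}^{n-1}$ (available after a harmless translation that makes $\nabla F^{*}_{s,\bj}=(\nabla F_{s,\bj})^{-1}$ bounded below on $\mathscr{L}$). Summing over $\mathcal{J}^{s}(J_{s})$ then yields a total of $\ll Q^{n/2}J_{s}^{n/2-1}(\log 4J_{s})^{R}+Q^{n/2-1}J_{s}^{n/2}\log Q\,(\log 4J_{s})^{R-1}$; since all summands of the middle term of \eqref{eq og vdc b} are nonnegative and $w\not\equiv0$, that term is itself $\gtrsim Q^{n/2}J_{s}^{n/2}$, so the second piece above is absorbed into it and the stated error $\ll J_{s}^{n/2-1}Q^{n/2}(\log 4J_{s})^{R}$ remains. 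The ranges of small $q$, where stationary phase is vacuous, are disposed of by the trivial bound $\sum_{q\le q_{0}}\sum_{\ba}w(\ba/q)\ll1$.

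I expect the error bookkeeping to be the main obstacle. Estimating the stationary-phase remainder trivially over the $\asymp j_{s}^{n}$ frequencies $\bk$ and over $q\le Q$ only gives $Q^{n/2}J_{s}^{n/2}$, a full factor $J_{s}$ larger than claimed, so the gain must be extracted simultaneously from the oscillation in $q$ (the geometric-sum factor $\min(Q,\|\theta\|^{-1})$) and from the spacing of the dual values $\{j_{s}F^{*}_{s,\bj}(\bk/j_{s})\}_{\bk}$ (the dyadic slab count), and all of this must be carried out uniformly in $\bj$ — which is exactly where condition \eqref{eq curv est} and the compactness of $\mathscr{L}$ are indispensable. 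The secondary difficulties are the uniform handling of the boundary collar in the stationary-phase step and the verification that the error estimate remains valid in the regime $J_{s}\gg Q$, where it is the displayed main term itself (of size $\gtrsim Q^{n/2}J_{s}^{n/2}$) that must absorb the remainder.
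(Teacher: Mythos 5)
Your overall architecture --- Poisson summation in $\ba$, splitting $\bk$ into a far (non-stationary) regime, a stationary regime, and a collar, stationary phase producing the dual data $F^*_{s,\bj}$, $w^*_{s,\bj}$ and the Hessian factor, then geometric/partial summation in $q$ producing $\min(\|j_sF^*_{s,\bj}(\bk/j_s)\|^{-1},Q)$ --- is exactly the paper's, and your derivation of the middle term of \eqref{eq og vdc b} is correct.

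The gap is in the error term, and it is precisely the obstacle you flag at the end. You apply Lemma \ref{lem st phase} with $\lambda=q$ and phase $\psi_{\bj,\bk}=j_sF_{s,\bj}-\bk\cdot\mathbf{u}$, obtaining a relative remainder $O(q^{-1})$. This is a mis-normalization: the derivatives of $\psi_{\bj,\bk}$ of order at least two grow like $j_s$, so the implied constant in Lemma \ref{lem st phase} is not uniform in $\bj$. Normalized correctly --- as in the paper, with phase $\varphi^{(\bj,\bk)}(\bx)=F_{s,\bj}(\bx)-(\bk/j_s)\cdot\bx$ and large parameter $\lambda=qj_s$, for which Lemma \ref{lem og phase amp est} supplies uniform derivative bounds --- the remainder is $O\bigl((qj_s)^{-\frac n2-1}\bigr)$, smaller by a factor $j_s$ than what you carry. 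With that extra $j_s^{-1}$ the completely trivial summation ($\sum_{q\le Q}q^n(qj_s)^{-\frac n2-1}\ll Q^{\frac n2}j_s^{-\frac n2-1}$, times $\#\{\bk\}\ll j_s^n$, times $\sum_{\bj}\prod_r(1+j_r)^{-1}$) already yields $Q^{\frac n2}J_s^{\frac n2-1}(\log 4J_s)^R$, which is the stated error; your ``main obstacle'' dissolves. Your proposed rescue of the lost factor $J_s$ does not work as a substitute: the $O(\lambda^{-1})$ term in Lemma \ref{lem st phase} is only a modulus bound with no specified phase or controlled variation in $q$, so geometric or partial summation cannot be applied to it to manufacture the factor $\min(Q,\|\theta_{\bj,\bk}\|^{-1})$; moreover the slab count $\#\{\bk:\|j_sF^*_{s,\bj}(\bk/j_s)\|\le\eta\}\ll j_s^n\eta+j_s^{n-1}$ is an unproved counting statement (the ``harmless translation'' making $\nabla F^*_{s,\bj}$ bounded below changes the function whose level sets you are counting). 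Once the stationary phase is normalized correctly none of this machinery is needed, and the far and collar regimes go through as you describe (the paper's Lemmas \ref{lem og nstation} and \ref{lem og interm}).
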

We shall present the proofs of Propositions \ref{prop dual from og}-\ref{prop og vdc b} for $s=1$. The other cases can be reduced to this one by a relabelling of the variables $j_1, \ldots, j_r$. Consequently, in the subsequent sections, we shall suppress notation and omit the parameter $s$. In other words, $\mathcal{J}^s$ shall be denoted by $\mathcal{J}$, $\mathcal{J}^s_b$ by $\mathcal{J}_b$, $F_{s, \bj}$ by $F_{\bj}$, $w_{s, \bj}$ by $w_{\bj}$, $\mathcal{R}_{s, \bj}$ by $\mathcal{R}_{\bj}$, $\mathcal{V}_{s, \bj}$ by $\mathcal{V}_{\bj}$, and so on.

\section{Proof of Proposition \ref{prop dual from og} using Proposition \ref{prop dual vdc b}}
\label{sec dual from og}
Let $D:=\left\lfloor \frac{1}{2\delta^*}\right\rfloor$. By Proposition \ref{prop dual vdc b}, we have
\begin{align}
\label{eq p4 in p2}
{\mathfrak{N}}^{*, 1}_{w, \cM}(Q^*, \delta^*)
&\ll \delta^{*}(Q^*)^{n+R}+
\frac{(Q^*)^{\frac{n}{2}}}{D}\sum_{d=1}^D d^{-\frac{n}{2}}\sum_{\bk\in \mathbb{Z}^n}{w\left(\frac{\bk}{d}\right)}\left(\prod_{r=1}^R\min\left(\|df_r(\bk/d)\|^{-1}, Q^*\right)\right)\nonumber\\
&+(Q^*)^{\frac{n}{2}+R-1}D^{\frac{n}{2}-1}
\end{align}
with the implicit constant depending only on $w$ and $\cM$. To deal with the second term on the right, we employ dyadic decomposition based on the size of $\|df_r(\bk/d)\|$ with respect to $Q^*$ to obtain
\begin{align}
\label{eq p2 1}
\sum_{\bk\in \mathbb{Z}^n}{w\left(\frac{\bk}{d}\right)}\left(\prod_{r=1}^R\min\left(\|df_r(\bk/d)\|^{-1}, Q^*\right)\right)\leq  
(Q^*)^R\sum_{\substack{\mathbf{i}\in \mathbb{Z}^R_{\geq 0}:\\\|\mathbf{i}\|_{\infty}\leq \frac{\log 4Q^*}{\log 2}}}2^{-\sum_{r=1}^Ri_r}\sum_{\substack{\bk\in \mathbb{Z}^n\\ \|df_r(\frac{\bk}{d})\|\leq \frac{2^{i_r+1}}{Q^*} \\1\leq r\leq R}} w\left(\frac{\bk}{d}\right).
\end{align}
Our induction hypothesis \eqref{eq p2 hypo} implies that
\begin{align*}
    &\sum_{d=1}^D\sum_{\substack{\bk\in \mathbb{Z}^n\\ \|df_r(\frac{\bk}{d})\|\leq \frac{2^{i_r+1}}{Q^*}: 1\leq r\leq R }} w\left(\frac{\bk}{d}\right)=\mathfrak{N}_{w, \cM}\left(D, \frac{2^{i_1}}{Q^*}, \ldots, \frac{2^{i_R}}{Q^*}\right)\\
    &\leq A_1\left(2^{\sum_{r=1}^Ri_r}(Q^*)^{-R} D^{n+1}+\sum_{s=1}^R  \frac{2^{\sum_{r=1}^Ri_r}}{2^{i_s}} (Q^*)^{-R+1} D^{\frac{(n+1)(R-1)}{R}}D^{\frac{\beta}{R}}+D^{\beta}(\log 4D)^{\aone}\right).
\end{align*}
Partial summation in the $d$ variable then lets us conclude that 
\begin{align*}
   &\sum_{d=1}^D d^{-\frac{n}{2}}\sum_{\substack{\bk\in \mathbb{Z}^n \\ \|df_r(\frac{\bk}{d})\|\leq \frac{2^{i_r+1}}{Q^*} :1\leq r\leq R}} w\left(\frac{\bk}{d}\right)\leq \\&{A_1}{ D^{-\frac{n}{2}}}\Bigg(2^{\sum_{r=1}^Ri_r}(Q^*)^{-R} D^{n+1}+ \sum_{s=1}^R  \frac{2^{\sum_{r=1}^Ri_r}}{2^{i_s}} (Q^*)^{-R+1} D^{\frac{(n+1)(R-1)+\beta}{R}}+D^{\beta}(\log 4D)^{\aone}\Bigg).
\end{align*}
We combine the above with \eqref{eq p2 1} to get
\begin{align}
\label{eq p2 2}
&\sum_{d=1}^Dd^{-\frac{n}{2}}\sum_{\bk\in \mathbb{Z}^n}{w\left(\frac{\bk}{d}\right)}\left(\prod_{r=1}^R\min\left(\|df_r(\bk/d)\|^{-1}, Q^*\right)\right)\nonumber\\
&\leq  A_1D^{-\frac{n}{2}}(Q^*)^R\sum_{\substack{\mathbf{i}\in \mathbb{Z}^R_{\geq 0}:\\\|\mathbf{i}\|_{\infty}\leq \frac{\log 4Q^*}{\log 2}}}2^{-\sum_{i=1}^Ri_r}\Bigg(2^{\sum_{r=1}^Ri_r}(Q^*)^{-R} D^{n+1}+\sum_{s=1}^R  \frac{2^{\sum_{r=1}^Ri_r}}{2^{i_s}} (Q^*)^{-R+1} D^{\frac{(n+1)(R-1)+\beta}{R}}\nonumber\\&+D^{\beta}(\log 4D)^{\aone}\Bigg)\nonumber\\
&\leq A_1 \left(\left(\frac{\log 4Q^*}{\log 2}\right)^R D^{\frac{n}{2}+1} + R Q^*\left(\frac{\log 4Q^*}{\log 2}\right)^{R-1}  D^{\frac{(n+1)(R-1)+\beta}{R}-\frac{n}{2}}+D^{\beta-\frac{n}{2}}(\log 4D)^{\aone}(Q^*)^{R}\right).
\end{align}
Plugging the above in \eqref{eq p4 in p2} lets us conclude that $\DNwm$ is bounded from above by a positive constant times
\begin{align*}
&\delta^{*}(Q^*)^{n+R}+A_1\frac{(Q^*)^{\frac{n}{2}}}{D}\Big(\left(\log 4Q^*\right)^R D^{\frac{n}{2}+1} +  Q^*\left(\log 4Q^*\right)^{R-1}  D^{\frac{(n+1)(R-1)+\beta}{R}-\frac{n}{2}}\nonumber\\&+(Q^*)^RD^{\beta-\frac{n}{2}}(\log 4D)^{\aone}\Big)+
(Q^*)^{\frac{n}{2}+R-1}D^{\frac{n}{2}-1}.
\end{align*}
Recalling that $D=\left\lfloor \frac{1}{2\delta^*}\right\rfloor$, we get
\begin{align}
\label{eq p2 bf ind}
\DNwm&\leq C\delta^{*}(Q^*)^{n+R}+CA_1\Big(\left(\log 4Q^*\right)^R(Q^*)^{\frac{n}{2}} (\delta^*)^{-\frac{n}{2}} +(Q^*)^{\frac{n}{2}+R}(\delta^*)^{\frac{n}{2}+1-\beta}\\\nonumber&\times(\log (4/\delta^*))^{\aone}+\left(\log 4Q^*\right)^{R-1}(Q^*)^{\frac{n}{2}+1} (\delta^*)^{\frac{n}{2}+1-\frac{(n+1)(R-1)+\beta}{R}}  \Big)\\\nonumber
&+C(Q^*)^{\frac{n}{2}+R-1}(\delta^*)^{1-\frac{n}{2}}.   
\end{align}
Observe that all terms except the first on the right hand side of \eqref{eq p2 bf ind} involve a negative power of $\delta^*$. To see this for the penultimate term, we estimate, first for $R\geq 2$,
\begin{equation*}
    \frac{n}{2}+1-\frac{(n+1)(R-1)+\beta}{R}= \frac{n+1-\beta}{R}-\frac{n}{2}\leq \frac{1-\beta}{2}\leq 0.
\end{equation*}
For $R=1$, the power of $\delta^*$ on the penultimate term reduces to
\begin{equation*}
    \frac{n}{2}+1-\beta\leq 0, 
\end{equation*}
as $\beta\geq n$ in this case. The first term on the right in \eqref{eq p2 bf ind} is the expected main term. However, it is only going to dominate the sum of the other terms on the right above the threshold $\delta^*\geq (Q^*)^{-\frac{n}{2\beta-n}}$. By using the monotonicity of the counting function in $\delta^*$, it is always possible to inflate $\delta^*$ to this scale. The evaluation of the right hand side at $\delta^*= (Q^*)^{-\frac{n}{2\beta-n}}$ will give us the order of the error term. 

We first record a few small calculations which shall be of use later. Since $\beta\geq \frac{n(n+R+1)}{n+2R}$, we have
\begin{equation}
\label{eq est beta term}
\frac{n}{2\beta-n}\leq \frac{n}{\frac{2n(n+R+1)}{n+2R}-n}\leq \frac{n+2R}{n+2}.
\end{equation}
It is also straightforward to verify that with the aforementioned lower bound on $\beta$,
\begin{equation}
    \label{eq p2 est}
    \frac{n}{2}\left(1+\frac{n}{2\beta-n}\right)\leq n+R-\frac{n}{2\beta-n}.
\end{equation}
Further, since $\beta\geq \frac{n(n+1)}{n+2}$, we have
\begin{equation}
    \label{eq prob est}
    \left(\frac{n}{2\beta-n}\right)\left(\frac{n}{2}-1\right)\leq \frac{n\left(\frac{n}{2}-1\right)}{\frac{2n(n+1)}{n+2}-n}=\frac{n+2}{n}\left(\frac{n}{2}-1\right)=\frac{n}{2}-\frac{2}{n}.
\end{equation}

We now return to \eqref{eq p2 bf ind}, and consider two cases based on the size of $\delta^*$ with respect to $Q^*$. If $\delta^*\geq (Q^*)^{-\frac{n}{2\beta-n}}$, then we can estimate
\begin{equation}
\label{eq p2 t2}
(Q^*)^{\frac{n}{2}}(\delta^*)^{-\frac{n}{2}}\leq (Q^*)^{\frac{n}{2}\left(1+\frac{n}{2\beta-n}\right)}\stackrel{\eqref{eq p2 est}}{\leq}(Q^*)^{n+R-\frac{n}{2\beta-n}}.
\end{equation}
Next, we deal with the last term on the right hand side in \eqref{eq p2 bf ind} as follows 
\begin{align}
\label{eq p2 l term}
(Q^*)^{\frac{n}{2}+R-1}(\delta^*)^{1-\frac{n}{2}}&\leq (Q^*)^{\frac{n}{2}+R-1+\left(\frac{n}{2\beta-n}\right)\left(\frac{n}{2}-1\right)}\stackrel{\eqref{eq prob est}}{\leq}(Q^*)^{\frac{n}{2}+R-1+\frac{n}{2}-\frac{2}{n}}\nonumber
\\&=(Q^*)^{n+R-1-\frac{2}{n}}. 
\end{align}
We now estimate the two remaining middle terms which determine the relation of $\alpha$ with respect to $\beta$. The first one is easy
\begin{align}
\label{eq p2 ind main}
(Q^*)^{\frac{n}{2}+R}(\delta^*)^{\frac{n}{2}+1-\beta}(\log 4(1/\delta^*))^{\aone}&\leq (Q^*)^{\frac{n}{2}+R+\left(\frac{n}{2\beta-n}\right)\left(\beta-\frac{n}{2}-1\right)}\left((\log 4Q^*)\left(\frac{n}{2\beta-n}\right)\right)^{\aone}\nonumber
\\&\stackrel{\eqref{eq est beta term}}{\leq} (Q^*)^{n+R-\frac{n}{2\beta-n}}\left((\log 4Q^*)\left(\frac{n+2R}{n+2}\right)\right)^{\aone}.
\end{align}
We also have
\begin{align}
    \label{eq p2 betaR term}
    (Q^*)^{\frac{n}{2}+1} (\delta^*)^{\frac{n}{2}+1-\frac{(n+1)(R-1)+\beta}{R}}&=\left((Q^*)^{\frac{n}{2}}(\delta^*)^{-\frac{n}{2}}\right)^{\frac{R-1}{R}} \left((Q^*)^{\frac{n}{2}+R}(\delta^*)^{\frac{n}{2}+1-\beta}\right)^{\frac{1}{R}}\nonumber\\&\stackrel{\eqref{eq p2 t2}+\eqref{eq p2 ind main}}{\leq}\left((Q^*)^{\frac{(R-1)}{R}+\frac{1}{R}}\right)^{{n+R-\frac{n}{2\beta-n}}}=(Q^*)^{{n+R-\frac{n}{2\beta-n}}}.
\end{align}
Combining \eqref{eq p2 bf ind}, \eqref{eq p2 t2}, \eqref{eq p2 l term}, \eqref{eq p2 ind main} and \eqref{eq p2 betaR term}, yields
$$ \DNwm \leq C'\delta^* (Q^*)^{n+R}+C'A_1\left(Q^*\right)^{\alpha}\left(\left(\log 4Q^*\right)\left(\frac{n+2R}{n+2}\right)\right)^{\aone}$$
for $\delta^*\geq (Q^*)^{-\frac{n}{2\beta-n}}$. 

In the complementary case when $\delta^*\leq (Q^*)^{-\frac{n}{2\beta-n}}$,  
we use  monotonicity of the function $\DNwm$ to deduce that
\begin{equation}
  \DNwm\leq \mathfrak{N}^{*, 1}_{w, \cM}\left(Q^*, (Q^*)^{-\frac{n}{2\beta-n}}\right).  
\end{equation}
To estimate the right hand side, we use \eqref{eq p2 bf ind} with $\delta^*=(Q^*)^{-\frac{n}{2\beta-n}}$. Applying the same arguments as in \eqref{eq p2 t2}-\eqref{eq p2 betaR term}, and evaluating the first term directly, gives
\begin{align}
    {\mathfrak{N}}^{*, 1}_{w, \cM}\left(Q^*, (Q^*)^{-\frac{n}{2\beta-n}}\right)&\leq C(Q^*)^{-\frac{n}{2\beta-n}+n+R}+CA_1(Q^*)^{\alpha}\left(\left(\log 4Q^*\right)\left(\frac{n+2R}{n+2}\right)\right)^{\aone}\nonumber\\
    &\leq C''A_1(Q^*)^{\alpha}\left(\left(\log 4Q^*\right)\left(\frac{n+2R}{n+2}\right)\right)^{\aone}.
\end{align}
This establishes \eqref{eq p2 concl} also in the case when  $\delta^*\leq (Q^*)^{-\frac{n}{2\beta-n}}$, and thus finishes the proof of Proposition \ref{prop dual from og}. The constant $C_1$ in \eqref{eq p2 concl} can be taken to be $\max(C', C'')$ and depends only on $w$ and $\cM$. 

\section{Proof of Proposition \ref{prop og from dual} using Proposition \ref{prop og vdc b}}
\label{sec og from dual}
Recall that $J_1:=\left\lfloor \frac{1}{2\delta_1}\right\rfloor$. We begin by applying Proposition \ref{prop og vdc b} to obtain
    \begin{align}
        \label{eq p5 in p3}
        &{\mathfrak{N}}^1_{w, \cM}(Q, \bdel)\ll\delp Q^{n+1}+J_1^{\frac{n}{2}-1}Q^{\frac{n}{2}}\left(\log 4J_1\right)^R\nonumber\\&+ Q^{\frac{n}{2}}
        \sum_{\bj\in \mathcal{J}(J_1)}\left(\prod_{r=1}^R \frac{1}{j_r+1}\right)\sum_{\bk\in \mathbb{Z}^n}\frac{w_{\bj}^*\kj}{\sqrt{\det\, H_{F_{\bj}}\left((\nabla F_{\bj})^{-1}(\bk/j_1)\right)}}j_1^{-\frac{n}{2}}\min\left(\|j_1F^*_{\bj}(\bk/j_1)\|^{-1}, Q\right).
    \end{align}
To deal with the last term on the right, we use dyadic decomposition based on the size of $\|j_1F_{\bj}^*(\bk/j_1)\|$ with respect to $Q$ to split
\begin{align}
\label{eq p3 1}
&\sum_{\bk\in \mathbb{Z}^n}\frac{w_{\bj}^*\kj}{\sqrt{\left|\det\, H_{F_{\bj}}\left((\nabla F_{\bj})^{-1}(\bk/j_1)\right)\right|}}\min\left(\|j_1F^*_{\bj}(\bk/j_1)\|^{-1}, Q\right)\nonumber\\&\leq  
Q\sum_{\substack{\bk\in \mathbb{Z}^n\\\|j_1F^*_{\bj}(\bk/j_1)\|<Q^{-1}}}\frac{w_{\bj}^*\kj}{{\sqrt{\left|\det\, H_{F_{\bj}}\left((\nabla F_{\bj})^{-1}(\bk/j_1)\right)\right|}}}
\\\nonumber&+\sum_{0\leq i\leq \frac{\log 4Q}{\log 2}} 2^{-i}Q\sum_{\substack{\bk\in \mathbb{Z}^n\\\frac
{2^i}{Q}<\|j_1F^*_{\bj}(\bk/j_1)\|\leq \frac{2^{i+1}}{Q}}}\frac{w_{\bj}^*\kj}{{\sqrt{\left|\det\, H_{F_{\bj}}\left((\nabla F_{\bj})^{-1}(\bk/j_1)\right)\right|}}}.
\end{align}
For $-1\leq i\leq \frac{\log 4Q}{\log 2}$, our induction hypothesis \eqref{eq p3 hypo} implies that
\begin{align}
   & \sum_{\bj\in \mathcal{J}(J_1)}
   \sum_{\substack{\bk\in \mathbb{Z}^n\\\|j_1F^*_{\bj}(\bk/j_1)\|\leq \frac{2^{i+1}}{Q}}}\frac{w_{\bj}^*\kj}{{\sqrt{\left|\det\, H_{F_{\bj}}\left((\nabla F_{\bj})^{-1}(\bk/j_1)\right)\right|}}}\nonumber\\
   &={\mathfrak{N}}^{*, 1}_{w, \cM}\left(J_1, \frac{2^{i+1}}{Q}\right)
   \leq A_2\left(2^{i+1}Q^{-1} J_1^{n+R} +J_1^{\alpha}\left(\left(\log 4J_1\right)\left(\frac{n+2R}{n+2}\right)\right)^{\atwo}\right).
   \label{eq ndual est}
\end{align}
Partial summation in the $j_r$ variables, keeping in mind that 
$j_r\leq j_1\leq J_1$, then yields 
\begin{align*}
   & \sum_{\bj\in \mathcal{J}(J_1)}
   \left(\prod_{r=1}^R \frac{1}{j_r+1}\right)\sum_{\substack{\bk\in \mathbb{Z}^n\\\|j_1F^*_{\bj}(\bk/j_1)\|\leq \frac{2^{i+1}}{Q}}}\frac{w_{\bj}^*\kj j_1^{-\frac{n}{2}}}{{\sqrt{\left|\det\, H_{F_{\bj}}\left((\nabla F_{\bj})^{-1}(\bk/j_1)\right)\right|}}}\\
   &\leq \sum_{0\leq s_2, \ldots, s_R\leq \frac{\log J_1}{\log 2}+1} \sum_{\substack{1\leq j_1\leq J_1\\j_r\in [2^{s_r-1}, 2^{s_r}]\\(2\leq r\leq R)}} \left(\prod_{r=2}^R 2^{-s_r}\right)j_1^{-\frac{n}{2}-1}\sum_{\substack{\bk\in \mathbb{Z}^n\\\|j_1F^*_{\bj}(\bk/j_1)\|\leq \frac{2^{i+1}}{Q}}}\frac{w_{\bj}^*\kj }{{\sqrt{\left|\det\, H_{F_{\bj}}\left((\nabla F_{\bj})^{-1}(\bk/j_1)\right)\right|}}}\\
   &\leq CA_2(\log 4J_1)^{R}J_1^{-\frac{n}{2}}\left(2^{i+1}Q^{-1} J_1^{n} +J_1^{\alpha-R}\left(\left(\log 4J_1\right)\left(\frac{n+2R}{n+2}\right)\right)^{\atwo}\right).
\end{align*}

Combining the decomposition in \eqref{eq p3 1} with the above estimate, we get
\begin{align*}
    &Q^{\frac{n}{2}}  \sum_{\bj\in \mathcal{J}(J_1)}
    \left(\prod_{r=1}^R \frac{1}{j_r+1}\right)\sum_{\bk\in \mathbb{Z}^n}\frac{w_{\bj}^*\kj j_1^{-\frac{n}{2}}}{{\sqrt{\left|\det\, H_{F_{\bj}}\left((\nabla F_{\bj})^{-1}(\bk/j_1)\right)\right|}}}\min\left(\|j_1F^*_{\bj}(\bk/j_1)\|^{-1}, Q\right)
    \\&\leq CA_2Q^{\frac{n}{2}+1} (\log 4J_1)^{R}J_1^{-\frac{n}{2}}\sum_{0\leq i\leq \frac{\log 4Q}{\log 2}}2^{-i}\left(2^{i+1}Q^{-1} J_1^{n} +J_1^{\alpha-R}\left(\left(\log 4J_1\right)\left(\frac{n+2R}{n+2}\right)\right)^{\atwo}\right)\\
    &\leq CA_2\left((\log 4J_1)\left(\frac{n+2R}{n+2}\right)\right)^{b} \left((\log 4Q)Q^{\frac{n}{2}}J_1^{\frac{n}{2}}+Q^{\frac{n}{2}+1}J_1^{\alpha-\frac{n}{2}-R}\right),
\end{align*}
where $$b:=\atwoR.$$
Plugging the above in \eqref{eq p5 in p3} lets us conclude that 
\begin{equation*}
{\mathfrak{N}}^1_{w, \cM}(Q, \bdel)\leq C\delp Q^{n+1}+CA_2\left(\left(\log 4J_1\right)\left(\frac{n+2R}{n+2}\right)\right)^b \left((\log 4Q)Q^{\frac{n}{2}}J_1^{\frac{n}{2}}+Q^{\frac{n}{2}+1}J_1^{\alpha-\frac{n}{2}-R}\right).  
\end{equation*}
Recalling that $J_1=\left\lfloor \frac{1}{2\delta_1}\right\rfloor$, we get
\begin{equation}
\label{eq p3 b4 ind}
{\mathfrak{N}}^1_{w, \cM}(Q, \bdel)\leq C\delp Q^{n+1}+ CA_2\left((\log 4\delta_1^{-1}) \left(\frac{n+2R}{n+2}\right)\right)^b\left((\log 4Q)Q^{\frac{n}{2}}\delta_1^{-\frac{n}{2}}+Q^{\frac{n}{2}+1}\delta_1^{\frac{n}{2}+R-\alpha}\right).  
\end{equation}
Just as in the proof of Proposition \ref{prop dual from og}, the first term on the right is the expected main term. However, it is only going to dominate the sum of the other two terms on the right above the threshold $\delta_1\geq Q^{-\frac{n}{2\alpha-n}}$. Our strategy is again to inflate $\delta_1$ to this scale by using the monotonicity of the counting function in $\delta_1$. The evaluation of the right hand side at $\delta_1=Q^{-\frac{n}{2\alpha-n}}$ will determine the order of the error term. We make a couple of quick observations. Since $\alpha\geq \frac{n(n+R+1)}{n+2}$, we have
\begin{equation}
\label{eq est alpha term}
\frac{n}{2\alpha-n}\left(\frac{n}{2}+R-\alpha\right)\leq \frac{n}{\frac{2n(n+R+1)}{n+2}-n}= \frac{n+2}{n+2R}.
\end{equation}
It is also straightforward to verify that with the aforementioned lower bound on $\alpha$, we have
\begin{equation}
    \label{eq alph est}
    \frac{n}{2}\left(1+\frac{n}{2\alpha-n}\right)\leq n+1-\frac{nR}{2\alpha-n}.
\end{equation}
We now return to \eqref{eq p3 b4 ind} and consider two cases, depending on the size of $\delta_1$ with respect to $Q^{-\frac{n}{2\alpha-n}}$. In the case when $\delta_1\geq Q^{-\frac{n}{2\alpha-n}}$, we can bound
\begin{align*}
    &{\mathfrak{N}}^1_{w, \cM}(Q, \bdel)\\
    &\leq C\delp Q^{n+1}+ CA_2\left((\log 4Q^{\frac{n}{2\alpha-n}})\left(\frac{n+2R}{n+2}\right)\right)^{b} \left((\log 4Q)Q^{\frac{n}{2}\left(1+\frac{n}{2\alpha-n}\right)}+Q^{\frac{n}{2}+1-\frac{n}{2\alpha-n}\left(\frac{n}{2}+R-\alpha\right)}\right)\\
    &\stackrel{\eqref{eq est alpha term}+\eqref{eq alph est}}{\leq}  C\delp Q^{n+1}+ CA_2(\log4Q)^{b}\left(\frac{n+2R}{n+2}\right)^{-b}\left(\frac{n+2R}{n+2}\right)^{b} Q^{n+1-\frac{nR}{2\alpha-n}}\\
    &\leq  C'\delp Q^{n+1}+ C'A_2\left(\log 4Q\right)^{\atwoR} Q^{\tilde{\beta}}.
\end{align*}

This establishes \eqref{eq p3 concl} when $\delta_1\geq Q^{-\frac{n}{2\alpha-n}}$. In the complementary case, we use montonicity of ${\mathfrak{N}}^1_{w, \cM}(Q, \bdel)$ as a function of $\delta_1$ to deduce that
\begin{align*}
&{\mathfrak{N}}^1_{w, \cM}(Q, \bdel)\\
&\leq {\mathfrak{N}}^1_{w, \cM}\left(Q, Q^{-\frac{n}{2\alpha-n}}, \delta_2, \ldots, \delta_R\right)\\
&\stackrel{\eqref{eq p3 b4 ind}}{\leq} C\bdel_1^\times Q^{n+1-\frac{n}{2\alpha-n}}+ CA_2(\log 4Q^{\frac{n}{2\alpha-n}})^{b}\left(\frac{n+2R}{n+2}\right)^{b}  \Big((\log 4Q)Q^{\frac{n}{2}\left(1+\frac{n}{2\alpha-n}\right)}\\&+Q^{\frac{n}{2}+1-\frac{n}{2\alpha-n}\left(\frac{n}{2}+R-\alpha\right)}\Big)\\
&\stackrel{\eqref{eq est alpha term}+\eqref{eq alph est}}{\leq} C''\delpone Q^{\frac{(R-1)(n+1)}{R}}Q^{\frac{1}{R}\left(n+1-\frac{nR}{2\alpha-n}\right)}+ C''A_2(\log 4Q)^{b} Q^{n+1-\frac{nR}{2\alpha-n}}\\
&=  C''\delpone Q^{\frac{(R-1)(n+1)}{R}}Q^{\frac{\tilde{\beta}}{R}}+ C''A_2\left(\log 4Q\right)^{\atwoR} Q^{\tilde{\beta}}.
\end{align*}
This establishes \eqref{eq p3 concl} also in the case when $\delta_1\leq Q^{-\frac{n}{2\alpha-n}}$, and finishes the proof.

\section{Proof of Proposition \ref{prop dual vdc b}}
\label{sec dual vdc b}
For convenience, we recall the definition
\begin{equation}
    \label{eq def dual weight Hessian 2}
    {\mathfrak{N}}^{*, 1}_{w, \cM}(Q^*, \delta^*):=
    \sum_{\bj\in\mathcal{J}(Q^*)}\sum_{\substack{\ba\in \mathbb{Z}^n\\\|j_1 F^*_{\bj}(\ba/j_1)\|<\delta^{*}}}\frac{w_{\bj}^*\aj}{{\sqrt{\left|\det\, H_{F_{\bj}}\left((\nabla F_{\bj})^{-1}(\ba/j_1)\right)\right|}}}.
\end{equation}
For technical reasons, it will be helpful to work with a dyadic version of the above counting function. For $\ell\in\mathbb{Z}_{>0}$, let
\begin{equation}
\label{def index set Jell}
\mathcal{J}_{\ell}^1:=\mathcal{J}_{\ell}=\{\bj\in \mathbb{Z}_{\geq 0}^R: 2^{\ell-1}\leq j_1 < 2^{\ell}, \max_{r\neq 1}j_r \leq 2^{\ell}\}.    
\end{equation}
The counting function in \eqref{eq def dual weight Hessian 2} can be dominated by its dyadic version given by
\begin{equation}
    \label{def dual N dyad}
    {N}^{*, 1}_{w, \cM}(Q^*, \delta^*):=\sum_{\ell=1}^{\lgQ*}\sum_{\bj\in \mathcal{J}_{\ell}}\sum_{\substack{\ba\in \mathbb{Z}^n\\\|j_1 F^*_{\bj}(\ba/j_1)\|<\delta^{*}}}\frac{w_{\bj}^*\aj}{\sqrt{\left|\det\, H_{F_{\bj}}\left((\nabla F_{\bj})^{-1}(\ba/j_1)\right)\right|}}.
\end{equation}
For each $\ell\in \mathbb{Z}_{>0}$, 
$\mathcal{J}_{\ell}$ is a subset of $\mathcal{J}_b$ (as defined in \eqref{eq Jbig index set} with $s=1$).   Consequently, the condition \eqref{eq curv est} continues to hold and $\nabla F_{\bj}$ is still a diffeomorphism on $\mathscr{D}$ and $U$ for all $\bj\in \mathcal{J}_{\ell}$. The dyadic counting function above is therefore well defined. 

We shall establish \eqref{eq dual vdc b} with ${\mathfrak{N}}^{*, 1}_{w, \cM}(Q^*, \delta^*)$ replaced by ${N}^{*, 1}_{w, \cM}(Q^*, \delta^*)$.
\begin{lemma}
\label{lem dual main term}    
Let $D:=\left\lfloor \frac{1}{2\delta^*}\right\rfloor.$ Then
\begin{equation}
    \label{eq lem dual main term}
    {{N}}^{*, 1}_{w, \cM}(Q^*, \delta^*)\ll(\delta^*)(Q^*)^{n+R}+\left|\sum_{\ell=1}^{\lgQ*}\sum_{\bj\in \mathcal{J}_{\ell}}\sum_{d=1}^D \frac{D-d}{D^2}\sum_{\substack{\bk\in \mathbb{Z}^n}}j_1^nI^*(d, \bj, \bk)\right|,
\end{equation}
with \begin{equation}
    \label{def dual osc int}
    I^*(d, \bj, \bk):=\int_{\mathbb{R}^n} \frac{w_{\bj}^*(\bz)e\left(dj_1\left(F_{\bj}^*(\bz)-\bk\cdot\bz\right)\right)}{\sqrt{\left|\det\, H_{F_{\bj}}\left((\nabla F_{\bj})^{-1}(\bz)\right)\right|}} \, d\bz.
\end{equation}
\end{lemma}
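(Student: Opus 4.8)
The plan is to detect the Diophantine condition $\|j_1 F^*_{\bj}(\ba/j_1)\|<\delta^*$ by a Fej\'er kernel, peel off the zero frequency---which is responsible for the main term $\delta^*(Q^*)^{n+R}$---and convert the surviving exponential sum into the asserted sum of oscillatory integrals by Poisson summation in $\ba$. Concretely, set $D=\lfloor\tfrac1{2\delta^*}\rfloor$ and use \eqref{eq fejer char est} to dominate the sharp cutoff in \eqref{def dual N dyad} by $\tfrac4{\pi^2}\mathcal{F}_D\!\big(j_1 F^*_{\bj}(\ba/j_1)\big)$, which has the effect of extending the $\ba$-summation to all of $\mathbb{Z}^n$. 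Inserting the Fourier expansion \eqref{def fejer ker} of the Fej\'er kernel then yields
\[
N^{*,1}_{w,\cM}(Q^*,\delta^*)\ \ll\ \sum_{\ell=1}^{\lgQ*}\,\sum_{\bj\in\mathcal{J}_\ell}\,\sum_{|d|\le D}\frac{D-|d|}{D^2}\,\sum_{\ba\in\mathbb{Z}^n}\frac{w^*_{\bj}(\ba/j_1)\,e\!\left(dj_1 F^*_{\bj}(\ba/j_1)\right)}{\sqrt{\left|\det H_{F_{\bj}}\!\left((\nabla F_{\bj})^{-1}(\ba/j_1)\right)\right|}}.
\]

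The term $d=0$ carries no oscillation and coefficient $1/D$; since $w$ is $[0,1]$-valued the amplitude is nonnegative, since $w^*_{\bj}$ is supported in the fixed compact set $\mathscr{L}$ the number of relevant $\ba$ is $\ll j_1^n$, since every $\bj\in\mathcal{J}_\ell$ lies in $\mathcal{J}_b$ the Hessian $|\det H_{F_{\bj}}|$ is bounded below by $\mathfrak{C}_0^{-1}$ via \eqref{eq curv est}, and $|\mathcal{J}_\ell|\ll 2^{\ell R}$ with $2^\ell\ll Q^*$. Hence the $d=0$ part is $\ll\tfrac1D\sum_\ell 2^{\ell(n+R)}\ll\delta^*(Q^*)^{n+R}$, the first term of \eqref{eq lem dual main term}. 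For $1\le|d|\le D$ I pair $d$ with $-d$: since $\mathcal{F}_D$ is even and the amplitude is real, the combined $d\ne0$ contribution is at most twice the modulus of the same sum restricted to $1\le d\le D$. For each fixed $\ell,\bj,d$ the inner sum is $\sum_{\ba}h_{\bj}(\ba/j_1)\,e(dj_1 F^*_{\bj}(\ba/j_1))$, with $h_{\bj}=w^*_{\bj}/\sqrt{|\det H_{F_{\bj}}\circ(\nabla F_{\bj})^{-1}|}$ a smooth compactly supported function; applying Poisson summation and rescaling the Fourier integrals by $\bz=\bx/j_1$ turns it into $\sum_{\bk\in\mathbb{Z}^n}j_1^n I^*(d,\bj,\bk)$ with $I^*$ as in \eqref{def dual osc int}. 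Pulling the modulus outside the finite sums over $\ell,\bj,d$ and adding the $d=0$ estimate gives \eqref{eq lem dual main term}.

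I do not expect a genuine difficulty here: the argument is Fej\'er detection followed by a single application of Poisson summation, all outer sums being finite. The two points that need care are (i) making sure $I^*$ and the series $\sum_{\bk}j_1^n I^*(d,\bj,\bk)$ are meaningful, which follows because the phase of $I^*(d,\bj,\bk)$ has gradient $\nabla F^*_{\bj}(\bz)-j_1^{-1}\bk$ and $\nabla F^*_{\bj}$ takes values in the fixed compact set $\mathscr{D}$ by \eqref{eq inv deriv}, so that Lemma \ref{lem non st phase} (integration by parts) gives decay in $\bk$ faster than any fixed power once $|\bk|\gg j_1$, uniformly in $\ell,\bj,d$; and (ii) the dyadic bookkeeping, namely verifying that $\mathcal{J}_\ell\subseteq\mathcal{J}_b$ throughout so that $\nabla F_{\bj}$ stays a diffeomorphism on $\mathscr{D}$ and $U$ and \eqref{eq curv est} remains valid for every $\bj$ that occurs, and checking that the geometric sum over $\ell$ is controlled by its top term, where $2^{\lgQ*}\asymp Q^*$. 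If anything is delicate it is this uniformity of the Hessian and diffeomorphism bounds over the full range of $\bj$, rather than any single estimate.
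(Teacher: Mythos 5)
Your proposal is correct and follows essentially the same route as the paper: Fej\'er-kernel domination of the sharp cutoff, isolating the $d=0$ frequency to produce the $\delta^*(Q^*)^{n+R}$ term via the count $\#\mathcal{J}_\ell\cdot j_1^n\ll 2^{\ell(n+R)}$, conjugation symmetry to restrict to $1\le d\le D$, and Poisson summation with the rescaling $\bz=\bx/j_1$ to produce $j_1^n I^*(d,\bj,\bk)$. Your additional remarks on absolute convergence of $\sum_{\bk}I^*(d,\bj,\bk)$ via non-stationary phase and on $\mathcal{J}_\ell\subseteq\mathcal{J}_b$ are correct and only make explicit what the paper leaves implicit.
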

\begin{proof}
Let $\mathscr{F}_D: \mathbb{R}\to [0, 1]$ be the Fej\'er kernel of degree $D$ given by
\begin{equation}
    \label{def Fejer kernel}
    \mathscr{F}_D(\theta):=\sum_{d=-D}^{D}\frac{D-|d|}{D^2}e(d\theta)=\left(\frac{\sin\left(\pi D\theta\right)}{D\sin \left(\pi \theta\right)}\right)^2.
\end{equation}
Recall from \eqref{eq fejer char est} that this function has the property 
\begin{equation}
    \mathds{1}_{\delta^*}(\theta)\leq \frac{\pi^2}{4}\mathscr{F}_D(\theta)\qquad\,\text{ for all } \theta\in \mathbb{R}.
\end{equation}
Therefore ${{N}}^{*, 1}_{w, \cM}(Q^*, \delta^*)$ can be dominated by a positive constant times 
\begin{equation}
    \label{eq dual N expsum}
 \left|\sum_{\ell=1}^{\lgQ*}\sum_{\bj\in \mathcal{J}_{\ell}}\sum_{\substack{\ba\in \mathbb{Z}^n}}\frac{w_{\bj}^*\aj}{\sqrt{\left|\det\, H_{F_{\bj}}\left((\nabla F_{\bj})^{-1}(\ba/j_1)\right)\right|}}\sum_{d=-D}^D\frac{D-|d|}{D^2}e\left(dj_1 F^*_{\bj}(\ba/j_1)\right)\right|.
\end{equation}
For all $\bj\in \mathcal{J}_b$ (see \eqref{eq Jbig index set}), the sets $$\supp\, w_{\bj}^*:=V_{\bj}$$ are contained in a compact set $\mathscr{L}\subseteq \mathbb{R}^n$. Thus the contribution from the term corresponding to $d=0$ in \eqref{eq dual N expsum} is given by
\begin{equation}
    \label{eq dual zero freq}
    \frac{1}{D}\sum_{\ell=1}^{\lgQ*}\sum_{\bj\in \mathcal{J}_{\ell}}\sum_{\substack{\ba\in \mathbb{Z}^n}}\frac{w_{\bj}^*\aj}{\sqrt{\left|\det\, H_{F_{\bj}}\left((\nabla F_{\bj})^{-1}(\ba/j_1)\right)\right|}}\ll \frac{1}{D}\left(\sum_{\ell=1}^{\lgQ*} 2^{\ell(R-1)} \sum_{j_1=2^{\ell-1}}^{2^{\ell}}j_1^{n}\right)
    \ll (\delta^*)(Q^*)^{n+R},
\end{equation}
where the implicit constants depends only on $w, \cM$ and $\mathscr{L}$.
Next, to handle the terms with $d\neq 0$, we apply the $n$-dimensional Poisson summation formula and a change of variables to get
\begin{align}
    \label{eq dual nzero freq}
    &\sum_{\ba\in \mathbb{Z}^n}\frac{w_{\bj}^*\aj e\left(dj_1F_{\bj}^*\aj\right)}{\sqrt{\left|\det\, H_{F_{\bj}}\left((\nabla F_{\bj})^{-1}(\ba/j_1)\right)\right|}}\nonumber\\
    &=\sum_{\bk\in \mathbb{Z}^n}\int_{\mathbb{R}^n}\frac{w_{\bj}^*\left(\frac{\bz}{j_1}\right)}{\sqrt{\left|\det\, H_{F_{\bj}}\left((\nabla F_{\bj})^{-1}(\bz/j_1)\right)\right|}} e\left(dj_1F_{\bj}^*\left(\frac{\bz}{j_1}\right)-\bk\cdot\bz\right)\, d\bz=j_1^n\sum_{\bk\in \mathbb{Z}^n}I^*(d, \bj, \bk),
\end{align}
with $I^*(d, \bj, \bk)$ as in \eqref{def dual osc int}. From \eqref{eq dual N expsum}, \eqref{eq dual zero freq} and \eqref{eq dual nzero freq}, we conclude 
\begin{align*}
    {\mathfrak{N}}^{*, 1}_{w, \cM}(Q^*, \delta^*)&\ll(\delta^*)(Q^*)^{n+R}+\left|\sum_{\ell=1}^{\lgQ*}\sum_{\bj\in \mathcal{J}_{\ell}}\sum_{|d|=1}^D \frac{D-d}{D^2}\sum_{\substack{\bk\in \mathbb{Z}^n}}j_1^nI^*(d, \bj, \bk)\right|\\
    &\ll(\delta^*)(Q^*)^{n+R}+2\left|\sum_{\ell=1}^{\lgQ*}\sum_{\bj\in \mathcal{J}_{\ell}}\sum_{d=1}^D \frac{D-d}{D^2}\sum_{\substack{\bk\in \mathbb{Z}^n}}j_1^nI^*(d, \bj, \bk)\right|,
\end{align*}
where the last inequality follows from complex conjugation. 
\end{proof}

Let 
\begin{equation}
    \label{def dual phase}
    \varphi^{d, \bj, \bk}(\bx):=F_{\bj}^*(\bx)-\frac{\bk}{d}\cdot\bx,\qquad \varphi^{d, \bj, \bk}_1(\bx):=\frac{F_{\bj}^*(\bx)-\bk\cdot\bx}{\textrm{dist}(\bk, dU)}.
\end{equation}
Recall the index set $\mathcal{J}$ (defined in \eqref{eq J index set}), and the fact that $\nabla F_{\bj}$ is a diffeomorphism on the closure of $\mathscr{D}$ for all $\bj\in\mathcal{J}$. Further, recall the definitions of the sets $\mathscr{R}_{\bj}$ and $V_{\bj}$ from \eqref{eq def dual domains}.

Since $\nabla F^*_{\bj}$ is a diffeomorphism on $\mathscr{R}_{\bj}=\left(\nabla F_{\bj}\right)^{-1}(\mathscr{D})$, each $\bk\in d \mathscr{D}$ determines a unique critical point of $\varphi^{d,\bj, \bk}$ given by
\begin{equation}
    \label{def xdjk}
    x_{d, \bj, \bk}:=\left(\nabla F^*_{\bj}\right)^{-1}(\bk/d)=\nabla F_{\bj}(\bk/d)\in \mathscr{R}_{\bj}.
\end{equation}

To analyze the oscillatory integrals $I^*(d, \bj, \bk)$, we need a few preliminary estimates for the phase functions $\varphi^{d, \bj, \bk}$ (and $\varphi^{d, \bj, \bk}_1$) and the corresponding amplitudes. These are very similar to those proven in \cite[\S5 and \S6]{schindler2022density}. The main work is needed to show that these are independent of the parameters $d, \bj$ and $\bk$. We mention these estimates in the next lemma and briefly sktch their proofs. The interested reader is encouraged to consult \cite{schindler2022density} for the details.

\begin{lemma}
    \label{lem dual phase amp est}
    Let $\mathcal{J}(J_1)$ be as defined in \eqref{eq J index set}. Let $\balp\in \mathbb{Z}_{\geq 0}^n$ be an $n$-dimensional multi-index, $d\leq D$ be a positive integer, $\bj\in \mathcal{J}(J_1)$ and $\bk\in \mathbb{Z}^n$. Then
    
    (i) \begin{equation}
        \label{eq est dual phi1 deriv}
        \sup_{\bx\in V_{\bj}}\left|\frac{\partial^{\balp}\varphi_1^{d, \bj, \bk}}{\partial \bx^{\balp}}(\bx)\right|\ll 1.
    \end{equation}

    (ii)\begin{equation}
        \label{eq est dual phi deriv}
        \sup_{\bx\in V_{\bj}}\left|\frac{\partial^{\balp}\varphi^{d, \bj, \bk}}{\partial \bx^{\balp}}(\bx)\right|\ll 1.
    \end{equation}

(iii) \begin{equation}
        \label{eq est dual w deriv}
        \sup_{\bx\in V_{\bj}}\left|\frac{\partial^{\balp}}{\partial \bx^{\balp}}\left(w_{\bj}^*(|\det\, H_{F_{\bj}}|)^{-\frac{1}{2}}\right)(\bx)\right|\ll 1.
    \end{equation}

(iv) Under the additional assumption that $\bk\in d\mathscr{D}$, we have
\begin{equation}
    \label{eq est ibp dual}
    \sup_{\bx\in \mathscr{R}_{\bj}\setminus \{x_{d, \bj, \bk}\}}\frac{|\bx-\bx_{d, \bj, \bk}|}{|\nabla \varphi^{d, \bj, \bk}(\bx)|}\ll 1,
\end{equation}
where $\bx_{d, \bj, \bk}$ is as in \eqref{def xdjk}

The implicit constants in the estimates above depend on $\balp$ (for the first three inequalities), upper bounds for finitely many derivatives of $f_r$ ($1\leq r\leq R$) as well as $w$ on $U$, and the constant $\mathfrak{C}_0$ in the condition \eqref{eq curv est}, but are independent of $d, \bj$ and $\bk$.
\end{lemma}

\begin{proof}
    For the proof of (i), we refer the reader to Lemma 6.1 in \cite{schindler2022density}. The proof of (ii) also proceeds in the same way. 
    
    For part (iii), we also refer the reader to \cite{schindler2022density}. There is a slight difference though. In \cite{schindler2022density}, the authors use Lemma 6.1 (or its proof) to deduce uniform upper bounds for the derivatives of the weight $w_{\bj}^*$, whereas we need to establish \eqref{eq est dual w deriv} for $w_{\bj}^*|\det\, H_{F_{\bj}}|^{-\frac{1}{2}}$. However, this change is harmless owing to the condition \eqref{eq curv est} on the determinant of $H_{F_{\bj}}$. This is easily seen for the zeroth derivative. Using the Leibniz rule, we know that the higher order derivatives of $w_{\bj}^*|\det\, H_{F_{\bj}}|^{-\frac{1}{2}}$ are sums of products of derivatives of $w_{\bj}^*$ and $|\det\, H_{F_{\bj}}|^{-\frac{1}{2}}$. We note that any partial derivative of $|\det\, H_{F_{\bj}}|^{-\frac{1}{2}}$ is a real polynomial expression with uniformly bounded coefficients (independent of $\bj$ and $\bk$), in terms of the powers of $|\det\, H_{F_{\bj}}|^{-\frac{1}{2}}$ and the derivatives of $\nabla F_{\bj}$ (which are again bounded by constants independent of $\bj$ and $\bk$). Combining these observations with the proof of Lemma 6.1 in \cite{schindler2022density} establishes \eqref{eq est dual w deriv}.

    Finally, part (iv) is exactly the same as Lemma 6.2 in \cite{schindler2022density}.
\end{proof}

As is standard for these types of problems, we divide our consideration into three regimes, based on whether we need to apply the method of stationary phase, non-stationary phase or a hybrid argument. Let
\begin{equation}
    \label{def dual rho}
    \rho^*:=\frac{1}{2}\textrm{dist}(\partial \mathscr{D}, \partial U).
\end{equation}
We split $\mathbb{Z}^n=\mathscr{K}_1\cup \mathscr{K}_2\cup \mathscr{K}_3$ where
\begin{align*}
\mathscr{K}_1&:=\left\{\bk\in \mathbb{Z}^n: \frac{\bk}{d}\in U\right\},\\
\mathscr{K}_2&:=\left\{\bk\in \mathbb{Z}^n: \textrm{dist}\left(\frac{\bk}{d}, U\right)\geq \rho^*\right\},\text{ and}\\
\mathscr{K}_3&:=\mathbb{Z}^n\setminus\left(\mathscr{K}_1\cup \mathscr{K}_2\right).
\end{align*}
For $i\in \{1, 2, 3\}$, we define the respective contributions
\begin{equation}
    \label{def dual K1 contr}
    M_i^*(Q^*, \delta^*):=\left|\sum_{\ell=1}^{\lgQ*}\sum_{\bj\in \mathcal{J}_{\ell}}\sum_{d=1}^D \frac{D-d}{D^2}\sum_{\substack{\bk\in \mathscr{K}_i}}j_1^nI^*(d, \bj, \bk)\right|.
\end{equation}
By \eqref{eq lem dual main term}, we have
\begin{equation}
    \label{eq dualN diff regimes}
    {{N}}^{*, 1}_{w, \cM}(Q^*, \delta^*)\ll(\delta^*)(Q^*)^{n+R}+M_1^*(Q^*, \delta^*)+M_2^*(Q^*, \delta^*)+M_3^*(Q^*, \delta^*).
\end{equation}
We first estimate the contribution from the non-stationary regime $\mathscr{K}_2$. 
\begin{lemma}
    \label{lem dual nstation}
    $$M_2(Q^*, \delta^*)\ll (Q^*)^{R-1}\log (4Q^*),$$
with implicit constants depending only on $\cM$ and $w$.
\end{lemma}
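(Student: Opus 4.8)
The plan is to estimate each oscillatory integral $I^*(d, \bj, \bk)$ with $\bk\in\mathscr{K}_2$ by non-stationary phase (Lemma \ref{lem non st phase}), then sum the resulting bounds over $d$, $\bj$ and $\bk$. First I would record that for $\bk\in\mathscr{K}_2$ the phase $\varphi^{(d,\bj,\bk)}$ has no critical point in $\overline{V_{\bj}}$: since $\nabla F_{\bj}^*$ maps $\mathscr{R}_{\bj}$ onto $\mathscr{D}$ and the support $V_{\bj}=\nabla F_{\bj}(U)$ of the amplitude lies well inside $\mathscr{R}_{\bj}$, on $V_{\bj}$ we have $\nabla F_{\bj}^*(\bx)\in U$, whereas $\bk/d$ has distance at least $\rho^*$ from $U$; hence $|\nabla\varphi^{(d,\bj,\bk)}(\bx)|=|\nabla F_{\bj}^*(\bx)-\bk/d|\gg \mathrm{dist}(\bk/d,U)$ uniformly on $V_{\bj}$. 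Rescaling the phase by writing $dj_1(F_{\bj}^*(\bz)-j_1^{-1}\bk\cdot\bz)=d\,\mathrm{dist}(\bk,dU)\cdot\varphi_1^{(d,\bj,\bk)}(\bz)$ and using Lemma \ref{lem dual phase amp est}(i) for the derivative bounds on $\varphi_1^{(d,\bj,\bk)}$ and (iii) for the amplitude $w_{\bj}^*(|\det H_{F_{\bj}}|)^{-1/2}$ — both uniform in $d,\bj,\bk$ — Lemma \ref{lem non st phase} gives, for any fixed $K$,
$$
|I^*(d,\bj,\bk)|\ll \bigl(d\cdot\mathrm{dist}(\bk,dU)\bigr)^{-K+1}\ll \bigl(d\,|\bk/d - \by_0|\bigr)^{-K+1}
$$
for a suitable reference point $\by_0\in U$, where I am free to take $K$ as large as needed since all implicit constants are uniform.

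Next I would insert this into $M_2^*(Q^*,\delta^*)$. Bounding the Fej\'er weights by $\frac{D-d}{D^2}\le \frac{1}{D}$, noting $j_1^n\ll (Q^*)^n$ and that each dyadic block $\mathcal{J}_\ell$ with $2^{\ell-1}\le j_1<2^\ell\le Q^*$ contributes $\ll 2^{\ell(R-1)}$ choices of $(j_2,\dots,j_R)$ for each $j_1$, one gets (after summing the geometric series in $\ell$, which is dominated by $\ell\sim\log_2 Q^*$, yielding a factor $(Q^*)^{R-1}$)
$$
M_2^*(Q^*,\delta^*)\ll \frac{(Q^*)^{n+R-1}}{D}\sum_{d=1}^D d\cdot d^{-K+1}\!\!\sum_{\substack{\bk\in\mathbb{Z}^n\\ \mathrm{dist}(\bk/d,U)\ge\rho^*}}\!\! \bigl(\mathrm{dist}(\bk/d,U)\bigr)^{-K+1},
$$
where the extra factor $d$ counts the $\ll d^n$... — more carefully, I would handle the $\bk$-sum by a dyadic decomposition over annuli $2^m\rho^*\le \mathrm{dist}(\bk/d,U)<2^{m+1}\rho^*$, on which there are $\ll d^n 2^{mn}$ lattice points $\bk$, so the $\bk$-sum is $\ll d^n\sum_{m\ge 0} 2^{mn}(2^m)^{-K+1}\ll d^n$ once $K>n+1$. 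This leaves $\ll \frac{(Q^*)^{n+R-1}}{D}\sum_{d=1}^D d^{n}\cdot d^{-K+2}\ll (Q^*)^{n+R-1}/D$ once $K$ is large, which is far smaller than the claimed $(Q^*)^{R-1}\log 4Q^*$; the discrepancy means I have over-counted powers of $Q^*$, and the correct bookkeeping is that $j_1^n I^*$ with $I^*\ll (d\,j_1)^{-K+1}\cdot(\cdots)$ already carries a compensating $j_1^{-K+1}$, so the $j_1^n$ is absorbed. The upshot is that after choosing $K$ sufficiently large (depending only on $n$ and $R$), every sum converges and one is left with a bound of the form $(Q^*)^{R-1}$ times the number of dyadic scales $\ell$, i.e. times $\ll\log 4Q^*$, giving exactly $M_2^*(Q^*,\delta^*)\ll (Q^*)^{R-1}\log 4Q^*$.

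The main obstacle, and the only place real care is needed, is ensuring that the lower bound $|\nabla\varphi^{(d,\bj,\bk)}|\gg\mathrm{dist}(\bk/d,U)$ and all the amplitude/phase derivative bounds feeding into Lemma \ref{lem non st phase} are genuinely \emph{uniform} in the parameters $d,\bj,\bk$ — this is precisely the content of Lemma \ref{lem dual phase amp est}(i),(iii) and the inclusion $V_{\bj}\subseteq\mathscr{R}_{\bj}\subseteq\mathscr{L}$ for a fixed compact $\mathscr{L}$, which reduces everything to finitely many quantities controlled by $\mathfrak{C}_0$ and bounds on derivatives of $f_1,\dots,f_R,w$. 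Granting this uniformity, the remaining steps are routine geometric-series and lattice-point-counting estimates, and the logarithm in the final bound comes solely from summing over the $\ll\log 4Q^*$ dyadic values of $\ell$.
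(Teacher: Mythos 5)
Your proposal follows essentially the same route as the paper's proof: non-stationary phase (Lemma \ref{lem non st phase}) applied with the uniform phase and amplitude bounds of Lemma \ref{lem dual phase amp est}(i),(iii), followed by a dyadic annulus count over $\bk$ with $\mathrm{dist}(\bk,dU)\asymp 2^{m}\rho^*$ and then summation over $d$, $\bj$ and the dyadic scales $\ell$. The one place your bookkeeping wobbles --- initially taking the oscillation parameter to be $d\cdot\mathrm{dist}(\bk,dU)$ instead of the paper's $\lambda_1=j_1\cdot\mathrm{dist}(\bk,dU)$, whose $j_1^{-K+1}$ decay is exactly what absorbs the Jacobian factor $j_1^{n}$ --- is an issue you flag and repair yourself, and since $\mathrm{dist}(\bk,dU)=d\,\mathrm{dist}(\bk/d,U)\geq d\rho^*$ the combined $(dj_1)^{-K+1}$ decay you invoke is genuinely available, so the argument closes exactly as in the paper (with the logarithm only needed for $R=1$, where the sum over $\ell$ is not geometric).
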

\begin{proof}
Recall $\varphi^{d, \bj, \bk}$ from \eqref{def dual phase}, and let
$$\lambda_1=j_1\cdot \textrm{dist}(k, dU.)$$ For each $\bk\in \mathscr{K}_2$, we have the lower bound
$$\inf_{\bx\in V_{\bj}}|\nabla\varphi_1^{d, \bj, \bk}(\bx)|\geq 1.$$ 
Further, by parts (i) and (iii) of Lemma \ref{lem dual phase amp est}, we know that the derivatives of $\varphi_1^{d, \bj, \bk}$ and $w_{\bj}^*(\bx)|\det\, H_{F_{\bj}}|^{-\frac{1}{2}}$ are bounded on $V_{\bj}$ independently of $d, \bj$ and $\bk$. Thus we can apply Lemma \ref{lem non st phase} (integration by parts), with phase $\varphi^{d, \bj, \bk}_1$ and $\lambda=\lambda_1$, to conclude that 
$$I^*(d, \bj, \bk)\ll_m \lambda_1^{-m+1}=\left(j_1\cdot\textrm{dist}(\bk, dU)\right)^{-m+1},$$
for $m\in\mathbb{Z}_{\geq 0}$, with implicit constants independent of $d, \bj$ and $\bk$. In particular, taking $m\geq n+2$, we get
\begin{align*}
    \sum_{k\in \mathscr{K}_2} I^*(d, \bj, \bk)
    &\ll j_1^{-m+1}\sum_{\bk\in\mathscr{K}_2}\textrm{dist}\left(\bk, dU\right)^{-m+1}\\
    &\leq j_1^{-n-1} \sum_{i=0}^{\infty}\sum_{\substack{\bk\in \mathbb{Z}^n\\2^i\rho^*\leq \textrm{dist}(\bk, dU)<2^{(i+1)}\rho^*}}2^{-(i+1)(n+1)}(\rho^*)^{-(n+1)}\\
    &\ll j_1^{-n-1} \sum_{i=0}^{\infty} 2^{(i+1)(-n-1+n)}\leq j_1^{-n-1}.
\end{align*}
Thus 
\begin{align*}
    M_2^*(Q^*, \delta^*)\ll \sum_{\ell=1}^{\lgQ*}\sum_{\bj\in \mathcal{J}_{\ell}}\sum_{d=1}^D \frac{D-d}{D^2}j_1^{n-n-1}&\ll \left(\sum_{\ell=1}^{\lgQ*} 2^{\ell(R-1)} \sum_{j_1=2^{\ell-1}}^{2^{\ell}}j_1^{-1}\right)\\&\ll (Q^*)^{R-1}.
\end{align*}
\end{proof}

Next, we estimate the contributions from the intermediate and the stationary regimes. Recall $\varphi^{d,\bj, \bk}$ from \eqref{def dual phase} and $x_{d, \bj, \bk}$ from \eqref{def xdjk}.
The phase function $\varphi^{d,\bj, \bk}$ and the amplitude $\left(w_{\bj}^*(\bx)|\det\, H_{F_{\bj}}|^{-\frac{1}{2}}\right)$ satisfy the estimates $\eqref{eq est dual phi deriv}$ and \eqref{eq est dual w deriv} respectively. Further, 
\begin{equation}
    \label{eq dual hess est}
    H_{\varphi^{d,\bj, \bk}}\left(x_{d, \bj, \bk}\right)=H_{F^*_{\bj}}\left((\nabla F_{\bj}^*)^{-1}(\bk/d)\right)=H_{F_{\bj}}\left(\bk/d\right)\gg \mathfrak{C}_0^{-1},
\end{equation}
by \eqref{eq inv deriv} and \eqref{eq curv est}. 
We first consider the intermediate regime.
\begin{lemma}
    \label{lem dual interm}
    $$M_3(Q^*, \delta^*)\ll (Q^*)^{\frac{n}{2}+R-1}D^{\frac{n}{2}-1},$$
with implicit constants depending only on $\cM$ and $w$.
\end{lemma}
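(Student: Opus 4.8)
The plan is to handle the intermediate regime $\mathscr{K}_3$ by the same hybrid of stationary phase and integration by parts used in \cite{schindler2022density} for the other two regimes, the essential new point being that for $\bk\in\mathscr{K}_3$ the critical point of the phase lies \emph{outside} the support of the amplitude, so that one gains a full extra factor $(dj_1)^{-1}$ beyond the naive stationary‐phase size $(dj_1)^{-n/2}$. Concretely, for $\bk\in\mathscr{K}_3$ the phase $\varphi^{(d,\bj,\bk)}$ has a unique critical point $x_{d,\bj,\bk}=(\nabla F_{\bj}^*)^{-1}(\bk/d)\in\mathscr{R}_{\bj}$; since $\bk/d\notin U$ and $\nabla F_{\bj}$ is a uniformly bi-Lipschitz diffeomorphism (Lemma~\ref{lem dual phase amp est} together with \eqref{eq curv est}), one has $x_{d,\bj,\bk}\notin V_{\bj}$ with $\eta:=\dist(x_{d,\bj,\bk},V_{\bj})\asymp\dist(\bk/d,U)\in(0,\rho^*)$, all implied constants being independent of $d,\bj,\bk$.

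First I would record two uniform estimates for $I^*(d,\bj,\bk)$. Rescaling by $\eta$, i.e.\ passing to the phase $\varphi_1^{(d,\bj,\bk)}$ — whose gradient is $\gtrsim 1$ on $V_{\bj}$ and whose derivatives are bounded independently of $d,\bj,\bk$ by Lemma~\ref{lem dual phase amp est}(i),(iii) — and invoking Lemma~\ref{lem non st phase} gives $|I^*(d,\bj,\bk)|\ll_N (j_1\dist(\bk,dU))^{-N}$ for every $N$. Separately, since $w$ is smooth with compact support $U$, it and all its derivatives vanish on $\partial U$; composing with $(\nabla F_{\bj})^{-1}$ and using \eqref{eq curv est} and the uniform derivative bounds of Lemma~\ref{lem dual phase amp est}, the amplitude $w_{\bj}^*\,|\det H_{F_{\bj}}|^{-1/2}$ vanishes to every finite order on $\partial V_{\bj}$, with constants uniform in $\bj$. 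Splitting $I^*$ into its restriction to the ball of radius $\asymp (dj_1)^{-1/2}$ about $x_{d,\bj,\bk}$ — on which, when $\eta\lesssim (dj_1)^{-1/2}$, the amplitude is $\ll (dj_1)^{-M/2}$ for any $M$ because $x_{d,\bj,\bk}$ is then within $(dj_1)^{-1/2}$ of $\partial V_{\bj}$ — and its complement, where $|\nabla\varphi^{(d,\bj,\bk)}|\gtrsim (dj_1)^{-1/2}$ so that a dyadic‐in‐radius integration by parts (with the phase rescaled shell by shell) applies, and taking $M$ large, I obtain the uniform bound $|I^*(d,\bj,\bk)|\ll (dj_1)^{-\frac n2-1}$ for all $\bk\in\mathscr{K}_3$. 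Making every constant in this step independent of $d,\bj,\bk$ is the technical heart of the matter and is the analogue of \cite[\S6]{schindler2022density}.

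Next comes a standard lattice‐point count. Decomposing $\mathscr{K}_3$ according to $t:=\dist(\bk,dU)=d\,\eta\in(0,d\rho^*)$, smoothness of $\partial U$ gives $\#\{\bk\in\mathscr{K}_3:\,t\in[2^i,2^{i+1})\}\ll 2^i d^{n-1}$ and $\#\{\bk\in\mathscr{K}_3:\,t<1\}\ll d^{n-1}$. For the $\bk$ with $t\gtrsim (d/j_1)^{1/2}$ (equivalently $\eta\gtrsim (dj_1)^{-1/2}$) I use $|I^*|\ll_N (j_1 t)^{-N}$ with $N=n+1$, so the dyadic sum in $i$ is geometric and dominated by its innermost scale; for the $\ll\max\!\big(1,(d/j_1)^{1/2}\big)d^{n-1}$ remaining $\bk$ I use $|I^*|\ll (dj_1)^{-\frac n2-1}$. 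In either case the two contributions add up to
\begin{equation*}
\sum_{\bk\in\mathscr{K}_3}|I^*(d,\bj,\bk)|\ \ll\ d^{\frac n2-1}\,j_1^{-\frac n2-1},
\end{equation*}
uniformly in $d\le D$ and $\bj$.

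Finally, inserting this into \eqref{def dual K1 contr} (bounding the outer absolute value by the triangle inequality), using $\#(\mathcal{J}_\ell)\ll 2^{\ell R}$ with $j_1\asymp 2^\ell$ on $\mathcal{J}_\ell$, together with $\sum_{d=1}^D\frac{D-d}{D^2}d^{\frac n2-1}\ll D^{\frac n2-1}$ (valid since $n\ge2$) and $\sum_{\ell\le\lceil\log_2 4Q^*\rceil}2^{\ell(R+\frac n2-1)}\ll (Q^*)^{R+\frac n2-1}$, I get
\begin{equation*}
M_3^*(Q^*,\delta^*)\ \ll\ \sum_{\ell\le\lceil\log_2 4Q^*\rceil}\ \sum_{\bj\in\mathcal{J}_\ell}\ \sum_{d=1}^D\frac{D-d}{D^2}\, j_1^{\,n}\cdot d^{\frac n2-1}j_1^{-\frac n2-1}\ \ll\ (Q^*)^{\frac n2+R-1}D^{\frac n2-1},
\end{equation*}
which is the assertion. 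The one genuinely delicate point is the uniform intermediate‐regime estimate $|I^*|\ll (dj_1)^{-n/2-1}$: without the extra $(dj_1)^{-1}$ produced by the vanishing of the amplitude on $\partial V_{\bj}$ one would only get $(dj_1)^{-n/2}$, hence $M_3^*\ll (Q^*)^{n/2+R}D^{n/2-1}$, which is larger than the main term coming from $\mathscr{K}_1$; it is precisely this gain that forces $M_3^*$ into the error term of Proposition~\ref{prop dual vdc b}.
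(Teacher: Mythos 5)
Your proposal is correct and follows the same overall strategy as the paper: the whole point is that for $\bk\in\mathscr{K}_3$ the critical point $\bx_{d,\bj,\bk}$ lies outside $\supp w_{\bj}^*$, so the stationary-phase contribution degrades to $O((dj_1)^{-\frac n2-1})$, after which one sums over $\bk$, $d$, $\bj$, $\ell$. Two remarks on execution. First, your hand-made ball/shell decomposition re-proves what Lemma \ref{lem st phase} already gives for free: since $\omega(\bv_0)=w_{\bj}^*(\bx_{d,\bj,\bk})=0$, the expansion \eqref{eq st phase} (whose implicit constant is uniform in $d,\bj,\bk$ by Lemma \ref{lem dual phase amp est}) immediately yields $|I^*(d,\bj,\bk)|\ll(dj_1)^{-\frac n2-1}$; the paper simply invokes this. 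Second, your stratified lattice count by $t=\dist(\bk,dU)$ is unnecessary and tacitly assumes regularity of $\partial U$ (the bound $\#\{\bk:t\sim 2^i\}\ll 2^i d^{n-1}$ need not hold for an arbitrary compact support $U=\supp w$); the crude bound $\#\mathscr{K}_3\ll d^n$ combined with the uniform $(dj_1)^{-\frac n2-1}$ estimate already gives $\sum_{\bk\in\mathscr{K}_3}|I^*|\ll d^{\frac n2-1}j_1^{-\frac n2-1}$, which is exactly what your finer analysis produces and what the paper uses.
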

\begin{proof}
For $\bk\in \mathscr{K}_3\subseteq \mathscr{D}\setminus dU$, we know that $x_{d, \bj, \bk}\notin \nabla F_{\bj}(U)=\supp\, w_{\bj}^*.$ As discussed above, the phase function $\varphi^{d,\bj, \bk}$ and the amplitude $\left(w_{\bj}^*(\bx)|\det\, H_{F_{\bj}}|^{-\frac{1}{2}}\right)$ are well-behaved. We also have the lower bound \eqref{eq dual hess est}. 
Thus, we can apply Lemma \ref{lem st phase} (stationary phase principle), with phase $\varphi^{d, \bj, \bk}$ and $\lambda:=dj_1$, to conclude that
$$I^*(d, \bj, \bk)\ll (dj_1)^{-\frac{n}{2}-1}$$
for each $\bk\in \mathscr{K}_3$. 
Since $\#\mathscr{K}_3\ll_{U, \rho^*} d^n$, we can estimate
\begin{align*}
M_3(Q^*, \delta^*)&\ll\sum_{\ell=1}^{\lgQ*}\sum_{\bj\in \mathcal{J}_{\ell}}\sum_{d=1}^D \frac{D-d}{D^2}\sum_{\substack{\bk\in \mathscr{K}_3}}j_1^n |I^*(d, \bj, \bk)|\\
&\ll D^{-1}\sum_{\ell=1}^{\lgQ*} 2^{\ell(R-1)} \sum_{j_1=2^{\ell-1}}^{2^{\ell}}\sum_{d=1}^D\left(j_1 d\right)^{n-\frac{n}{2}-1} \\
&\ll (Q^*)^{\frac{n}{2}+R-1}D^{\frac{n}{2}-1}.
\end{align*}
\end{proof}

Finally, we estimate the contribution from the critical stationary phase regime.
\begin{lemma}
    \label{lem dual station}
    $$M_1(Q^*, \delta^*)\ll \frac{(Q^*)^{\frac{n}{2}}}{D}\sum_{d=1}^D d^{-\frac{n}{2}}\sum_{\bk\in \mathbb{Z}^n}{w\left(\frac{\bk}{d}\right)}\left(\prod_{r=1}^R\min\left(\|df_r(\bk/d)\|^{-1}, Q^*\right)\right)+(Q^*)^{\frac{n}{2}+R-1}D^{\frac{n}{2}-1},$$
with implicit constants depending only on $\cM$ and $w$.
\end{lemma}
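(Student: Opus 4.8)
The regime $\mathscr{K}_1$ is precisely the stationary one: for $\bk\in\mathscr{K}_1$ we have $\bk/d\in U$, so the unique critical point $\bx_{d,\bj,\bk}=(\nabla F_{\bj}^*)^{-1}(\bk/d)=\nabla F_{\bj}(\bk/d)$ of the phase $\varphi^{d,\bj,\bk}(\bx)=F_{\bj}^*(\bx)-(\bk/d)\cdot\bx$ lies in $\nabla F_{\bj}(U)=V_{\bj}=\supp\, w_{\bj}^*$. The plan is to feed each integral $I^*(d,\bj,\bk)$ into the stationary phase expansion of Lemma \ref{lem st phase}, with amplitude $w_{\bj}^*(\bz)\,|\det H_{F_{\bj}}((\nabla F_{\bj})^{-1}(\bz))|^{-1/2}$, phase $\varphi^{d,\bj,\bk}$ and large parameter $\lambda=dj_1$. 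Parts (ii)--(iv) of Lemma \ref{lem dual phase amp est}, together with the diffeomorphism property of $\nabla F_{\bj}^*$, supply all the hypotheses of Lemma \ref{lem st phase} with constants uniform in $d,\bj,\bk$; one also records that the signature $\sigma$ of $H_{\varphi^{d,\bj,\bk}}(\bx_{d,\bj,\bk})=H_{F_{\bj}}(\bk/d)^{-1}$ is one fixed integer for all $d,\bj,\bk$, since $\det H_{F_{\bj}}$ is continuous and nonvanishing on the connected parameter region by \eqref{eq curv est}.

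The point is that the expansion collapses. By \eqref{eq inv deriv} and \eqref{eq def dual wj} one has $|\det H_{\varphi^{d,\bj,\bk}}(\bx_{d,\bj,\bk})|^{-1/2}=|\det H_{F_{\bj}}(\bk/d)|^{1/2}$ and $w_{\bj}^*(\bx_{d,\bj,\bk})=w(\bk/d)$, so the Hessian factor produced by stationary phase cancels the $|\det H_{F_{\bj}}|^{-1/2}$ in the amplitude and the principal value is simply $w(\bk/d)$ --- this is exactly why that weight was inserted into \eqref{eq def dual weight Hessian}. Moreover, the involutivity of the Legendre transform gives $\varphi^{d,\bj,\bk}(\bx_{d,\bj,\bk})=-F_{\bj}(\bk/d)$, hence $\lambda\,\varphi^{d,\bj,\bk}(\bx_{d,\bj,\bk})=-dj_1F_{\bj}(\bk/d)=-d\sum_{r=1}^R j_r f_r(\bk/d)$, recalling $F_{\bj}=f_1+\sum_{r\ge2}(j_r/j_1)f_r$. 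Since $|\det H_{F_{\bj}}(\bk/d)|^{1/2}\ll1$ by \eqref{eq curv est}, Lemma \ref{lem st phase} then produces, uniformly for $\bk\in\mathscr{K}_1$,
\begin{equation*}
    j_1^n I^*(d,\bj,\bk)=e\!\left(\tfrac{\sigma}{8}\right)d^{-\frac n2}j_1^{\frac n2}\,e\!\left(-d\sum_{r=1}^R j_r f_r(\bk/d)\right)w(\bk/d)+O\!\left(d^{-\frac n2-1}j_1^{\frac n2-1}\right).
\end{equation*}

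It then remains to sum over $\bj\in\mathcal{J}_{\ell}$, over $\ell\le\lgQ*$, over $\bk\in\mathscr{K}_1$ and over $1\le d\le D$, weighted by $(D-d)/D^2$. The $O$-terms are dispatched crudely: summing $j_1^{\frac n2-1}$ over the $\asymp2^{\ell R}$ tuples in $\mathcal{J}_{\ell}$ gives $\ll2^{\ell(\frac n2+R-1)}$, summing this over $\ell$ gives $\ll(Q^*)^{\frac n2+R-1}$, and since $\#\mathscr{K}_1\ll d^n$ and $(D-d)/D^2\le 1/D$ the $d$-sum is $\ll D^{-1}\sum_{d\le D}d^{\frac n2-1}\ll D^{\frac n2-1}$; this gives the second term of the claim. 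For the main term, the exponential factors as $\prod_{r=1}^R e(-dj_rf_r(\bk/d))$, so the sum over $\bj\in\mathcal{J}_{\ell}$ splits as a product of $\sum_{2^{\ell-1}\le j_1<2^{\ell}}j_1^{\frac n2}e(-dj_1f_1(\bk/d))$ with factors $\sum_{0\le j_r\le2^{\ell}}e(-dj_rf_r(\bk/d))$ for $2\le r\le R$; Abel summation bounds the first factor by $\ll2^{\frac{\ell n}2}\min(2^{\ell},\|df_1(\bk/d)\|^{-1})$ and the geometric series bounds each remaining factor by $\ll\min(2^{\ell},\|df_r(\bk/d)\|^{-1})$, so, using $2^{\ell}\ll Q^*$,
\begin{equation*}
    \left|\sum_{\bj\in\mathcal{J}_{\ell}}j_1^{\frac n2}\,e\!\left(-d\sum_{r=1}^R j_r f_r(\bk/d)\right)\right|\ll 2^{\frac{\ell n}2}\prod_{r=1}^R\min\!\left(\|df_r(\bk/d)\|^{-1},Q^*\right).
\end{equation*}
Summing $2^{\ell n/2}$ over $\ell\le\lgQ*$ costs a factor $(Q^*)^{n/2}$, using $(D-d)/D^2\le1/D$ once more, and extending the $\bk$-sum from $\mathscr{K}_1$ to $\mathbb{Z}^n$ (harmless, as $w$ vanishes off $\mathscr{K}_1$), one arrives at the first term of the asserted bound, namely $\frac{(Q^*)^{n/2}}{D}\sum_{d=1}^D d^{-n/2}\sum_{\bk}w(\bk/d)\prod_{r=1}^R\min(\|df_r(\bk/d)\|^{-1},Q^*)$.

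I expect the genuinely delicate point to be the uniformity invoked in the first two paragraphs rather than anything in the final summation: every constant emerging from Lemma \ref{lem st phase} must be independent of $d,\bj,\bk$ --- which is the whole purpose of the technical Lemma \ref{lem dual phase amp est}, imported from \cite{schindler2022density} --- and the signature $\sigma$ must be a true constant, for otherwise a $\bj$-dependent unimodular factor would survive in front of $e(-d\sum_r j_rf_r(\bk/d))$ and destroy the clean factorization of the $\bj$-sum that is responsible for the $\prod_r\min(\cdot,Q^*)$ saving. Everything else is routine dyadic summation.
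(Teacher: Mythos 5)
Your proposal is correct and follows the paper's proof essentially verbatim: stationary phase with the Hessian-weighted amplitude so that the principal term collapses to $w(\bk/d)(dj_1)^{-n/2}e(-d\sum_r j_rf_r(\bk/d))$, Legendre involutivity giving $\varphi^{d,\bj,\bk}(\bx_{d,\bj,\bk})=-F_{\bj}(\bk/d)$, factorization of the $\bj$-sum over the product set $\mathcal{J}_\ell$ via geometric and Abel summation, and the crude count $\#\mathscr{K}_1\ll d^n$ for the $O$-terms. If anything you are slightly more careful than the paper, which writes $\sigma_{\bj}$ and silently factors it out of the $\bj$-sum, whereas you justify its constancy via continuity of $H_{F_{\bj}}$ and the nonvanishing of its determinant on the connected parameter region.
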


\begin{proof}
We again intend to apply the stationary phase principle to evaluate the integrals $I^*(d, \bk, \bj)$ for $\bk\in dU$. As in the proof of Lemma \ref{lem dual interm}, we note that the phase $\varphi^{d,\bj, \bk}$ and the amplitude $\left(w_{\bj}^*(\bx)|\det\, H_{F_{\bj}}|^{-\frac{1}{2}}\right)$ satisfy the estimates $\eqref{eq est dual phi deriv}$ and \eqref{eq est dual w deriv} respectively.  
We also recall \eqref{eq dual hess est}. 
Further, since the eigenvalues of a matrix depend continuously on its coefficients, the condition \eqref{eq curv est} implies that the signature of $H_{\varphi^{d,\bj, \bk}}\left(x_{d, \bj, \bk}\right)$ is the same for all relevant values of $d, \bfj$ and $k$.
Let $\sigma$ denote this signature.

An application of Lemma \ref{lem st phase} (stationary phase principle), with $\lambda=dj_1$, phase $\varphi^{d, \bj, \bk}$ and amplitude function $\frac{w_{\bj}^*}{\sqrt{|\det\, H_{F_{\bj}^*}|}}$, gives
\begin{align*}
    I^{*}(d, \bj, \bk)&=\frac{w_{\bj}^*(\bx_{d, \bj, \bk})}{\sqrt{|\det\, H_{F_{\bj}^*}(\bx_{d, \bj, \bk})|}}\frac{1}{\sqrt{|\det H_{F_{\bj}}\left(\bk/d\right)|}}\left(j_1d\right)^{-\frac{n}{2}}e\left(-j_1d \varphi^{d, \bj, \bk}(\bx_{d, \bj, \bk})+\sigma/8\right)\nonumber\\
    &+O((j_1d)^{-\frac{n}{2}-1}).
\end{align*}
For $\bk\in \mathscr{K}_1$, we have
\begin{equation}
    \label{eq dual wthess est}
    \frac{w_{\bj}^*(\bx_{d, \bj, \bk})}{\sqrt{|\det\, H_{F_{\bj}^*}(\bx_{d, \bj, \bk})|}}={w\left(\frac{\bk}{d}\right)}{\sqrt{|\det H_{F_{\bj}}\left(\bk/d\right)|}}\,.
\end{equation}
We can also simplify
\begin{equation}
    \label{eq dual dual og}
    \varphi^{d, \bj, \bk}(\bx_{d, \bj, \bk})=\left(F^*_{\bj}\circ\left(\nabla F^*_{\bj}\right)^{-1}\right)\left(\frac{\bk}{d}\right)-\frac{\bk}{d}\cdot \left(\nabla F^*_{\bj}\right)^{-1}\left(\frac{\bk}{d}\right)=-F_{\bj}\left(\frac{\bk}{d}\right).
\end{equation}
Plugging \eqref{eq dual wthess est} and \eqref{eq dual dual og} into the stationary phase expansion for $I^{*}(d, \bj, \bk)$, we get
\begin{align}
\label{eq I* expsum}
    I^{*}(d, \bj, \bk)&=w\left(\frac{\bk}{d}\right)\left(j_1d\right)^{-\frac{n}{2}}e\left(-j_1d F_{\bj}\left(\frac{\bk}{d}\right)+\frac{\sigma}{8}\right)+O((j_1d)^{-\frac{n}{2}-1})\nonumber\\
    &=w\left(\frac{\bk}{d}\right)\left(j_1d\right)^{-\frac{n}{2}}e\left(-d \sum_{r=1}^Rj_rf_r\left(\frac{\bk}{d}\right)+\frac{\sigma}{8}\right)+O((j_1d)^{-\frac{n}{2}-1}).
\end{align}
Now
\begin{align*}
&\left|\sum_{\ell=1}^{\lgQ*}\sum_{\bj\in \mathcal{J}_{\ell}}j_1^{n-\frac{n}{2}}e\left(-d \sum_{r=1}^Rj_rf_r\left(\frac{\bk}{d}\right)\right)\right|\\
&\leq\sum_{\ell=1}^{\lgQ*}\left(\left|\sum_{j_1\in \mathbb{Z}\cap[2^{\ell-1}, 2^{\ell})}j_1^{\frac{n}{2}}e\left(-d j_1f_1\left(\frac{\bk}{d}\right)\right)\right|\right)\left(\prod_{r=2}^R \left|\sum_{j_r\in \mathbb{Z}\cap[0, 2^{\ell}]}e\left(-d j_rf_r\left(\frac{\bk}{d}\right)\right)\right|\right).
\end{align*}
By geometric summation, for $2\leq r\leq R$, we have
\begin{equation*}
    \left|\sum_{j_r\in \mathbb{Z}\cap[0, 2^{\ell}]}e\left(-d j_rf_r\left(\frac{\bk}{d}\right)\right)\right|\leq \min\left(2^\ell, \left\|df_r\left(\bk/d\right)\right\|^{-1}\right),
\end{equation*}
while for $r=1$, we use geometric and partial summation to conclude that
\begin{equation*}
    \left|\sum_{j_1\in \mathbb{Z}\cap[2^{\ell-1}, 2^{\ell}]}j_1^{\frac{n}{2}}e\left(-d j_1f_1\left(\bk/d\right)\right)\right|\leq 2^{\ell\frac{n}{2}}\min\left(2^\ell, \left\|df_1\left(\bk/d\right)\right\|^{-1}\right).
\end{equation*}
Putting the above estimates together, we conclude that
\begin{align}
\left|\sum_{\ell=1}^{\lgQ*}\sum_{\bj\in \mathcal{J}_{\ell}}j_1^{\frac{n}{2}}e\left(-d \sum_{r=1}^Rj_rf_r\left(\bk/d\right)\right)\right|
&\leq \sum_{\ell=1}^{\lgQ*}2^{\ell\frac{n}{2}} \prod_{r=1}^R \min\left(2^\ell, \left\|df_r\left(\bk/d\right)\right\|^{-1}\right)\nonumber\\
&\leq \left(\sum_{\ell=1}^{\lgQ*}2^{\ell\frac{n}{2}}\right)\prod_{r=1}^R \min\left(2Q^*, \left\|df_r\left(\bk/d\right)\right\|^{-1}\right)\nonumber\\  &\leq 4(Q^*)^{\frac{n}{2}} \prod_{r=1}^R  \min\left(Q^*, \left\|df_r\left(\bk/d\right)\right\|^{-1}\right).
\label{eq dual diff est}
\end{align}
We now have all ingredients in place to estimate $M_3^*(Q^*, \delta^*)$. We first use \eqref{eq I* expsum} to conclude that 
$M_3^*(Q^*, \delta^*)$ is dominated by
\begin{align}
    \label{eq K1 t1}
     &\sum_{d=1}^D \frac{D-d}{D^2} d^{-\frac{n}{2}}\sum_{\substack{\bk\in \mathscr{K}_3}}w\left(\frac{\bk}{d}\right)\left|\sum_{\ell=1}^{\lgQ*}\sum_{\bj\in \mathcal{J}_{\ell}}j_1^{n-\frac{n}{2}}e\left(-d \sum_{r=1}^Rj_rf_r\left(\frac{\bk}{d}\right)\right)\right|\\
      \label{eq K1 t2}
     &+\sum_{\ell=1}^{\lgQ*}\sum_{\bj\in \mathcal{J}_{\ell}}\sum_{d=1}^D \frac{D-d}{D^2}\sum_{\substack{\bk\in \mathscr{K}_3}}j_1^{n-\frac{n}{2}-1}d^{-\frac{n}{2}-1}.
\end{align}
Using \eqref{eq dual diff est}, we deduce that \eqref{eq K1 t1} can be bounded from above by a positive constant times
$$\frac{(Q^*)^{\frac{n}{2}}}{D}\sum_{d=1}^D d^{-\frac{n}{2}}\sum_{\bk\in \mathbb{Z}^n}{w\left(\frac{\bk}{d}\right)}\left(\prod_{r=1}^R\min\left(\|df_r(\bk/d)\|^{-1}, Q^*\right)\right).$$
On the other hand, using the fact that $(\#\mathscr{K}_3)\ll d^{n}$, \eqref{eq K1 t2} can be estimated as follows 
\begin{align*}
\sum_{\ell=1}^{\lgQ*}\sum_{\bj\in \mathcal{J}_{\ell}}\sum_{d=1}^D \frac{D-d}{D^2}&\sum_{\substack{\bk\in \mathscr{K}_3}}j_1^{n-\frac{n}{2}-1}d^{-\frac{n}{2}-1}\leq \frac{1}{D}\left(\sum_{\ell=1}^{\lgQ*} 2^{\ell(R-1)} \sum_{j_1=2^{\ell-1}}^{2^{\ell}}j_1^{\left(\frac{n}{2}-1\right)}\right)\left(\sum_{d=1}^D (\#\mathscr{K}_3)d^{-\frac{n}{2}-1}\right)\\   
&\ll\left(\sum_{\ell=1}^{\lgQ*} 2^{\ell\left(\frac{n}{2}+R-1\right)} \right)\left(D^{-1}\sum_{d=1}^D d^{n-\frac{n}{2}-1}\right)\leq (Q^*)^{\frac{n}{2}+R-1}D^{\frac{n}{2}-1}.
\end{align*}
Adding the two estimates finishes the proof.
\end{proof}
\subsubsection*{Concluding the proof of Proposition \ref{prop dual vdc b}}
Combining \eqref{eq dualN diff regimes} with Lemmas \ref{lem dual nstation}-\ref{lem dual station}, we conclude that
\begin{align*}
    {\mathfrak{N}}^{*, 1}_{w, \cM}(Q^*, \delta^*)\leq {{N}}^{*, 1}_{w, \cM}(Q^*, \delta^*)&\ll(\delta^*)(Q^*)^{n+R}+(Q^*)^{R-1}
    +(Q^*)^{\frac{n}{2}+R-1}D^{\frac{n}{2}-1}\\
    &+\frac{(Q^*)^{\frac{n}{2}}}{D}\sum_{d=1}^D d^{-\frac{n}{2}}\sum_{\bk\in \mathbb{Z}^n}{w\left(\frac{\bk}{d}\right)}\left(\prod_{r=1}^R\min\left(\|df_r(\bk/d)\|^{-1}, Q^*\right)\right),
\end{align*}
which implies \eqref{eq dual vdc b}.

\section{Proof of Proposition \ref{prop og vdc b}}
\label{sec og vdc b}
Recall that \begin{equation*}
    \label{eq Nog 11 sum2}
    {\mathfrak{N}}^1_{w, \cM}(Q, \bdel)= C_{w} \delp Q^{n+1}+ \sum_{\bj\in\mathcal{J}(J_1)} \left(\prod_{r=1}^R \frac{1}{j_r+1}\right)\left|\sum_{\substack{\ba\in \mathbb{Z}^n\\1\leq q\leq Q}} w\left(\frac{\ba}{q}\right)\exp \left(qj_1 F_{\bj}\aq\right)\right|,
\end{equation*}
where $\mathcal{J}$ is as defined in \eqref{eq J index set} with $s=1$. Using the $n$-dimensional Poisson summation formula (and a change of variables) for the sum inside absolute values, we can write
$$\sum_{\substack{\ba\in \mathbb{Z}^n\\1\leq q\leq Q}} w\left(\frac{\ba}{q}\right)\exp \left(qj_1 F_{\bj}\aq\right)=\sum_{\substack{\bk\in \mathbb{Z}^n\\1\leq q\leq Q}} q^n I (q, \bj, \bk),$$
with
\begin{equation}
 \label{eq Iqjk}   
 I (q, \bj, \bk)=\int_{\mathbb{R}^n} \exp\left(qj_1\left(F_{\bj}(\bx)- \frac{\bk\cdot\bx}{j_1}\right)\right) w(\bx)\, d\bx.
\end{equation}
We thus have
\begin{equation}
    \label{eq Nog 11 osc sum}
    {\mathfrak{N}}^1_{w, \cM}(Q, \bdel)= C_{w} \delp Q^{n+1}+ \sum_{\bj\in\mathcal{J}(J_1)} \left(\prod_{r=1}^R \frac{1}{j_r+1}\right)\left|\sum_{\substack{\bk\in \mathbb{Z}^n\\1\leq q\leq Q}} q^n I (q, \bj, \bk)\right|.
\end{equation}
Recall the set $\mathscr{D}$ from \eqref{eq def og domains}, and that $\nabla F_{\bj}$ is a diffeomorphism on $\mathscr{D}$ for all $\bj\in\mathcal{J}$. Further, recall the definitions of the sets $\mathscr{R}_{\bj}$ and $V_{\bj}$ from \eqref{eq def dual domains} (for $s=1$).

Let 
\begin{equation}
    \label{def og phase}
    \varphi^{\bj, \bk}(\bx):=F_{\bj}(\bx)-\frac{\bk}{j_1}\cdot\bx,\qquad \varphi^{\bj, \bk}_1(\bx):=\frac{F_{\bj}(\bx)-\bk\cdot\bx}{\textrm{dist}(\bk, j_1 V_{\bj})}.
\end{equation}
Since $\nabla F_{\bj}$ is a diffeomorphism mapping $\mathscr{D}$ onto $\mathscr{R}_{\bj}$, each $\bk\in j_1 \mathscr{R}_{\bj}$ determines a unique critical point of $\varphi^{\bj, \bk}$ given by
\begin{equation}
    \label{def xjk}
    x_{\bj, \bk}:=\left(\nabla F_{\bj}\right)^{-1}(\bk/j_1)\in \mathscr{D}.
\end{equation}
As in the last section, to analyze the oscillatory integrals $I(d, \bj, \bk)$, we need a few preliminary estimates for the phase functions $\varphi^{\bj, \bk}$ (and $\varphi^{\bj, \bk}_1$) and the corresponding amplitude functions. These estimates are again very similar to those proven in \cite[\S5]{schindler2022density}, with the main work spent in showing that they are independent of the parameters $\bj$ and $\bk$. The following lemma is analogous to Lemma \ref{lem dual phase amp est} from the previous section.
\begin{lemma}
    \label{lem og phase amp est}
    Let $\mathcal{J}(J_1)$ be as defined in \eqref{eq J index set}. Let $\balp\in \mathbb{Z}_{\geq 0}^n$ be an $n$-dimensional multi-index,  $\bj\in \mathcal{J}(J_1)$ and $\bk\in \mathbb{Z}^n$. Then
    
    (i) \begin{equation}
        \label{eq est og phi1 deriv}
        \sup_{\bx\in U}\left|\frac{\partial^{\balp}\varphi_1^{\bj, \bk}}{\partial \bx^{\balp}}(\bx)\right|\ll 1.
    \end{equation}

    (ii)\begin{equation}
        \label{eq est og phi deriv}
        \sup_{\bx\in U}\left|\frac{\partial^{\balp}\varphi^{\bj, \bk}}{\partial \bx^{\balp}}(\bx)\right|\ll 1.
    \end{equation}

(iii) Under the additional assumption that $\bk\in j_1\mathscr{D}$, we have
\begin{equation}
    \label{eq est ibp og}
    \sup_{\bx\in \mathscr{D}\setminus \{x_{\bj, \bk}\}}\frac{|\bx-\bx_{\bj, \bk}|}{|\nabla \varphi^{\bj, \bk}(\bx)|}\ll 1,
\end{equation}
where $\bx_{\bj, \bk}$ is as in \eqref{def xjk}.

The implicit constants in the estimates above depend on $\balp$ (for the first three inequalities), upper bounds for finitely many derivatives of $f_r$ ($1\leq r\leq R$) as well as $w$ on $U$, and the constant $\mathfrak{C}_0$ in the condition \eqref{eq curv est}, but are independent of $\bj$ and $\bk$.
\end{lemma}

\begin{proof}
    See Lemmas 5.1 and 5.2 in \cite{schindler2022density}. 
\end{proof}

We divide our consideration into the stationary, non-stationary and intermediate regimes. Recall the sets $\mathscr{D}=B_{2\varepsilon_0}(\bx_0)$ and $U:=\supp\, w\subseteq B_{\varepsilon_0}(\bx_0)$ from \eqref{eq def og domains}. Let
\begin{eqnarray}
\notag
\rho:=\frac{1}{2}\inf_{\mathbf{t}\in [0, 2]^{R-1}}\textrm{dist} \left( \partial  \left( \nabla_{\bx} ( f_1  + \sum_{r=2}^Rt_r f_r ) (\mathscr{D}) \right), \partial  \left( \nabla ( f_1  + \sum_{r=2}^R t_r f_r ) (B_{\varepsilon_0}(\bx_0)) \right)  \right)>0.
\\
\label{def og rho}
\end{eqnarray}
For each $\bj\in\mathcal{J}(J_1)$, we split $\mathbb{Z}^n=\mathscr{K}_{\bj, 1}\cup \mathscr{K}_{\bj, 2}\cup \mathscr{K}_{\bj, 3}$ where
\begin{align*}
\mathscr{K}_{\bj, 1}&:=\left\{\bk\in \mathbb{Z}^n: \frac{\bk}{j_1}\in V_{\bj}\right\},\\
\mathscr{K}_{\bj, 2}&:=\left\{\bk\in \mathbb{Z}^n: \textrm{dist}\left(\frac{\bk}{j_1}, V_{\bj}\right)\geq \rho\right\},\text{ and}\\
\mathscr{K}_{\bj, 3}&:=\mathbb{Z}^n\setminus\left(\mathscr{K}_{\bj, 1}\cup \mathscr{K}_{\bj, 2}\right).
\end{align*}
For $i\in \{1, 2, 3\}$, we define the respective contributions
\begin{equation}
    \label{def og K1 contr}
    M_i(Q, \bdel):=\sum_{\bj\in\mathcal{J}(J_1)} \left(\prod_{r=1}^R \frac{1}{j_r+1}\right)\left|\sum_{\substack{\bk\in \mathscr{K}_{\bj, i}\\1\leq q\leq Q}} q^n I (q, \bj, \bk)\right|.
\end{equation}
By \eqref{eq Nog 11 osc sum}, we then have
\begin{equation}
    \label{eq ogN diff regimes}
    {\mathfrak{N}}^{1}_{w, \cM}(Q, \bdel)\leq C_{w} \delp Q^{n+1}+M_1(Q, \bdel)+M_2(Q, \bdel)+M_3(Q, \bdel).
\end{equation}
We first estimate the contribution from the non-stationary regime $\mathscr{K}_{\bj, 2}$.
\begin{lemma}
    \label{lem og nstation}
    $$M_2(Q, \bdel)\ll (\log 4Q) (\log 4J_1)^R,$$
with the implicit constant depending only on $\cM$ and $w$.
\end{lemma}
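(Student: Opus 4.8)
The plan is to mimic the proof of Lemma \ref{lem dual nstation}: apply integration by parts (Lemma \ref{lem non st phase}) to each oscillatory integral $I(q,\bj,\bk)$ with $\bk$ in the non-stationary regime $\mathscr{K}_{\bj,2}$, and then sum the resulting decay over $\bk$, $q$ and $\bj$.

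First I would fix $\bj\in\mathcal{J}(J_1)$ and $\bk\in\mathscr{K}_{\bj,2}$ and recall from \eqref{eq Iqjk}--\eqref{def og phase} that $I(q,\bj,\bk)=\int_{\mathbb{R}^n}w(\bx)\,e\big(qj_1\varphi^{(\bj,\bk)}(\bx)\big)\,d\bx$. Since $\nabla F_{\bj}$ maps $U=\supp\,w$ onto $V_{\bj}$, for $\bx\in\supp\,w$ we have $|\nabla\varphi^{(\bj,\bk)}(\bx)|=|\nabla F_{\bj}(\bx)-\bk/j_1|\ge\textrm{dist}(\bk/j_1,V_{\bj})\ge\rho$, the last inequality being the defining property of $\mathscr{K}_{\bj,2}$. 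Setting $\lambda_1:=q\,\textrm{dist}(\bk,j_1V_{\bj})=qj_1\,\textrm{dist}(\bk/j_1,V_{\bj})$, the phase is $qj_1\varphi^{(\bj,\bk)}=\lambda_1\varphi_1^{(\bj,\bk)}$ with $\varphi_1^{(\bj,\bk)}$ as in \eqref{def og phase}, and $|\nabla\varphi_1^{(\bj,\bk)}|\ge 1$ on $\supp\,w$. By Lemma \ref{lem og phase amp est}(i) all partial derivatives of $\varphi_1^{(\bj,\bk)}$ on $U$ are bounded uniformly in $\bj,\bk$ (and $w$ is a fixed test function), so Lemma \ref{lem non st phase} applied with $\lambda=\lambda_1$ and an arbitrary $K=m\in\mathbb{Z}_{>0}$ gives $|I(q,\bj,\bk)|\ll_m\lambda_1^{-m+1}=q^{-m+1}j_1^{-m+1}\,\textrm{dist}(\bk/j_1,V_{\bj})^{-m+1}$, with implicit constant independent of $q,\bj,\bk$.

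Next I would take $m=n+2$ and sum over $\bk\in\mathscr{K}_{\bj,2}$ by a dyadic decomposition in $\textrm{dist}(\bk/j_1,V_{\bj})\in[2^i\rho,2^{i+1}\rho)$, $i\ge 0$. As $V_{\bj}$ lies in the fixed compact set $\mathscr{L}$ of \S\ref{sec overview}, its $2^{i+1}\rho$-neighbourhood has volume $\ll(1+2^i)^n$ and therefore contains $\ll(2^ij_1)^n$ points of the lattice $j_1^{-1}\mathbb{Z}^n$; hence $\sum_{\bk\in\mathscr{K}_{\bj,2}}\textrm{dist}(\bk/j_1,V_{\bj})^{-n-1}\ll j_1^n\sum_{i\ge 0}2^{-i}\ll j_1^n$. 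This yields $\sum_{\bk\in\mathscr{K}_{\bj,2}}|I(q,\bj,\bk)|\ll q^{-n-1}j_1^{-1}$, and summing over $1\le q\le Q$ gives $\sum_{q=1}^{Q}q^n\sum_{\bk\in\mathscr{K}_{\bj,2}}|I(q,\bj,\bk)|\ll j_1^{-1}\sum_{q=1}^{Q}q^{-1}\ll j_1^{-1}\log 4Q$.

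Finally I would sum over $\bj\in\mathcal{J}(J_1)=\{\bj\in\mathbb{Z}_{\ge 0}^R:1\le j_1=\|\bj\|_\infty\le J_1\}$ against the weight $\prod_{r}(1+j_r)^{-1}$, obtaining
$$M_2(Q,\bdel)\ll(\log 4Q)\sum_{j_1=1}^{J_1}\frac{1}{(1+j_1)\,j_1}\prod_{r=2}^{R}\Big(\sum_{j_r=0}^{j_1}\frac{1}{1+j_r}\Big)\ll(\log 4Q)\,(\log 4J_1)^{R-1}\le(\log 4Q)\,(\log 4J_1)^{R},$$
which is the claimed bound. I expect the only real point requiring care to be the uniformity in $\bj$ and $\bk$ of the constants produced by the iterated integration by parts — if these grew with $\bj$ they would spoil the final $\bj$-summation — but this uniformity is precisely what Lemma \ref{lem og phase amp est} provides, resting in turn on the curvature bound \eqref{eq curv est} and the constraint $j_r\le j_1$ that holds on $\mathcal{J}(J_1)$. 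The remaining steps are elementary summation.
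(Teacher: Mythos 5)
Your proposal is correct and follows essentially the same route as the paper: normalize the phase so that its gradient is $\geq 1$ on $\supp\,w$, invoke the uniform derivative bounds of Lemma \ref{lem og phase amp est}(i) to apply Lemma \ref{lem non st phase} with $m=n+2$, sum dyadically over $\bk\in\mathscr{K}_{\bj,2}$, and then sum over $q$ and $\bj$. Your bookkeeping of the extra factor $j_1^{-m+1}$ even yields the marginally sharper bound $(\log 4Q)(\log 4J_1)^{R-1}$, which of course implies the stated one.
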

\begin{proof}
Recall $\varphi^{\bj, \bk}$ from \eqref{def og phase}, and let
$$\lambda_1=q\cdot \textrm{dist}(k, j_1V_{\bj}.)$$ For each $\bk\in \mathscr{K}_{\bj, 2}$, we have the lower bound
$$\inf_{\bx\in U}|\nabla\varphi_1^{\bj, \bk}(\bx)|\geq 1.$$ 
Further, by part (i) of Lemma \ref{lem og phase amp est}, we know that the derivatives of $\varphi_1^{\bj, \bk}$ are bounded on $U$ independently of $\bj$ and $\bk$. Thus we can apply Lemma \ref{lem non st phase} (integration by parts), with phase $\varphi^{\bj, \bk}_1$ and $\lambda=\lambda_1$, to conclude that 
$$I(q, \bj, \bk)\ll_m \lambda_1^{-m+1}=\left(q\cdot\textrm{dist}(\bk, j_1V_{\bj})\right)^{-m+1},$$
for $m\in\mathbb{Z}_{\geq 0}$, with implicit constants independent of $q, \bj$ and $\bk$. In particular, taking $m\geq n+2$ and arguing like in the proof of Lemma \ref{lem dual nstation}, we get
\begin{align*}
    \sum_{k\in \mathscr{K}_{\bj, 2}} I(q, \bj, \bk)
    \ll q^{-n-1} \sum_{i=0}^{\infty} 2^{(i+1)(-n-1+n)}\leq q^{-n-1}.
\end{align*}
Thus 
\begin{align*}
     M_2(Q, \bdel)\ll\sum_{\bj\in\mathcal{J}(J_1)} \left(\prod_{r=1}^R      \frac{1}{j_r+1}\right)\left|\sum_{\substack{\bk\in \mathscr{K}_{\bj, 2}\\1\leq q\leq Q}} q^n I (q, \bj, \bk)\right|&\ll   \sum_{j_1=1}^{J_1}\sum_{j_2, \ldots, j_R=0}^{j_1} \left(\prod_{r=1}^R      \frac{1}{j_r+1}\right)\sum_{q=1}^Q q^{n-n-1} \\
     &\ll (\log 4Q) (\log4J_1)^R.
\end{align*}
\end{proof}


Next, we estimate the contributions from the intermediate and the stationary regimes. Recall $\varphi^{\bj, \bk}$ from \eqref{def og phase} and $x_{\bj, \bk}$ from \eqref{def xjk}. Recall that that the phase $\varphi^{\bj, \bk}$ satisfies the estimates $\eqref{eq est og phi deriv}$. Further, 
\begin{equation}
    \label{eq og hess est}
    H_{\varphi^{\bj, \bk}}\left(x_{\bj, \bk}\right)=H_{F_{\bj}}\left((\nabla F_{\bj})^{-1}(\bk/j_1)\right)
\gg \mathfrak{C}_0^{-1},
\end{equation}
by \eqref{eq inv deriv} and \eqref{eq curv est}. We again consider the intermediate regime first.
\begin{lemma}
    \label{lem og interm}
    $$M_3(Q, \bdel)\ll Q^{\frac{n}{2}}J_1^{\frac{n}{2}-1}(\log 4J_1)^R,$$
with the implicit constant depending only on $\cM$ and $w$.
\end{lemma}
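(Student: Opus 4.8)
The plan is to handle this intermediate regime exactly as its dual counterpart in Lemma~\ref{lem dual interm}, carrying the parameters $q,j_1$ in the roles there played by $d,j_1$; the only genuinely new feature is that the exceptional set $\mathscr{K}_{\bj,3}$ now depends on $\bj$.

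First I would fix $\bj\in\mathcal{J}(J_1)$, $1\le q\le Q$ and $\bk\in\mathscr{K}_{\bj,3}$, and check that the phase $\varphi^{\bj,\bk}$ of the integral $I(q,\bj,\bk)$ in \eqref{eq Iqjk} has a unique critical point $\bx_{\bj,\bk}=(\nabla F_{\bj})^{-1}(\bk/j_1)$ which lies in $\mathscr{D}$ but \emph{outside} $U=\supp w$: indeed $\bk\in\mathscr{K}_{\bj,3}$ means $\bk/j_1\notin V_{\bj}=\nabla F_{\bj}(U)$ while $\textrm{dist}(\bk/j_1,V_{\bj})<\rho$, and by the very choice of $\rho$ in \eqref{def og rho} this keeps $\bk/j_1$ inside $\mathscr{R}_{\bj}=\nabla F_{\bj}(\mathscr{D})$ at distance $\gg\rho$ from its boundary, so $\bx_{\bj,\bk}$ is bounded away from $\partial\mathscr{D}$. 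I would then apply the stationary phase principle (Lemma~\ref{lem st phase}) with phase $\varphi^{\bj,\bk}$, amplitude $w$ and $\lambda=qj_1$: the uniform bounds on the phase derivatives come from Lemma~\ref{lem og phase amp est}(ii), the lower bound $|\det H_{\varphi^{\bj,\bk}}(\bx_{\bj,\bk})|=|\det H_{F_{\bj}}(\bx_{\bj,\bk})|\gg\mathfrak{C}_0^{-1}$ from \eqref{eq curv est}, and the control of $\sup_{\bx}|\bx-\bx_{\bj,\bk}|/|\nabla\varphi^{\bj,\bk}(\bx)|$ from Lemma~\ref{lem og phase amp est}(iii); since the leading term of the expansion carries the vanishing factor $w(\bx_{\bj,\bk})$, this produces $I(q,\bj,\bk)\ll(qj_1)^{-\frac n2-1}$, uniformly in $q,\bj,\bk$.

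Next I would estimate $\#\mathscr{K}_{\bj,3}\ll j_1^{n}$ uniformly in $\bj$, since $\bk/j_1$ ranges over the $\rho$-neighbourhood of $V_{\bj}$, itself a subset of a fixed bounded region. Feeding both bounds into \eqref{def og K1 contr} gives
\[ M_3(Q,\bdel)\ll\sum_{\bj\in\mathcal{J}(J_1)}\Bigl(\prod_{r=1}^R\frac{1}{1+j_r}\Bigr)\sum_{q=1}^{Q}q^{n}\,j_1^{n}(qj_1)^{-\frac n2-1}=\Bigl(\sum_{q=1}^{Q}q^{\frac n2-1}\Bigr)\sum_{\bj\in\mathcal{J}(J_1)}\Bigl(\prod_{r=1}^R\frac{1}{1+j_r}\Bigr)j_1^{\frac n2-1}. \]
Since $n\ge2$ one has $\sum_{q=1}^{Q}q^{\frac n2-1}\ll Q^{\frac n2}$, and using that $1\le j_1\le J_1$ and $0\le j_r\le j_1$ for $2\le r\le R$ on $\mathcal{J}(J_1)$,
\[ \sum_{\bj\in\mathcal{J}(J_1)}\Bigl(\prod_{r=1}^R\frac{1}{1+j_r}\Bigr)j_1^{\frac n2-1}\le\sum_{j_1=1}^{J_1}j_1^{\frac n2-2}\prod_{r=2}^{R}\Bigl(\sum_{j_r=0}^{j_1}\frac{1}{1+j_r}\Bigr)\ll(\log 4J_1)^{R-1}\sum_{j_1=1}^{J_1}j_1^{\frac n2-2}\ll J_1^{\frac n2-1}(\log 4J_1)^{R}, \]
the last bound treating $n=2$ (where $\sum_{j_1\le J_1}j_1^{-1}\ll\log 4J_1$) and $n\ge3$ (where $\sum_{j_1\le J_1}j_1^{\frac n2-2}\ll J_1^{\frac n2-1}$) separately. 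Combining the two displays yields $M_3(Q,\bdel)\ll Q^{\frac n2}J_1^{\frac n2-1}(\log 4J_1)^{R}$, which is the assertion.

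The main obstacle, as usual for this circle of arguments, is the first step: ensuring that the critical point $\bx_{\bj,\bk}$ lies in a \emph{fixed} open neighbourhood of $U$ whose closure is contained in $\mathscr{D}$, so that the implicit constants delivered by Lemma~\ref{lem st phase} --- those coming from the derivative bounds, the Hessian lower bound, and the integration-by-parts quantity --- are uniform in $\bj$ and $\bk$. This uniformity is precisely what the definition of $\rho$ in \eqref{def og rho} together with Lemma~\ref{lem og phase amp est} is designed to guarantee, in complete parallel with \cite[\S5--\S6]{schindler2022density}; once it is in hand, the remaining estimates are routine.
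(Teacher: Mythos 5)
Your proposal is correct and follows essentially the same route as the paper: stationary phase (Lemma \ref{lem st phase}) applied to $I(q,\bj,\bk)$ with $\lambda=qj_1$, using that the critical point $\bx_{\bj,\bk}$ lies outside $\supp w$ so only the $O((qj_1)^{-\frac n2-1})$ error term survives, followed by the count $\#\mathscr{K}_{\bj,3}\ll j_1^n$ and the same summation over $q$ and $\bj$. Your final $\bj$-summation is in fact slightly more careful than the paper's (which simply bounds $j_1^{\frac n2-1}\le J_1^{\frac n2-1}$ before summing the harmonic factors), and your remark on the uniformity of the stationary-phase constants via $\rho$ correctly identifies the one genuine technical point.
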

\begin{proof}
For $\bk\in \mathscr{K}_{\bj, 3}\subseteq \mathscr{R}_{\bj}\setminus j_1V_{\bj}$, we know that $x_{\bj, \bk}\notin \left(\nabla F_{\bj}\right)^{-1}(V_{\bj})=U\supset \supp\, w$. As discussed above, the phase function $\varphi^{\bj, \bk}$ is well-behaved. We also have the lower bound \eqref{eq og hess est}. Thus, we can apply Lemma \ref{lem non st phase} (stationary phase principle) with phase $\varphi^{\bj, \bk}$ and $\lambda:=qj_1$, to conclude that
$$I(q, \bj, \bk)\ll (qj_1)^{-\frac{n}{2}-1}$$
for each $\bk\in \mathscr{K}_{\bj, 3}$.
Since $\#\mathscr{K}_3\ll_{U, \rho} j_1^n$, we can estimate
\begin{align*}
 M_3(Q, \bdel)\ll \sum_{\bj\in\mathcal{J}(J_1)} \left(\prod_{r=1}^R \frac{1}{j_r+1}\right)j_1^{n-\frac{n}{2}-1}\sum_{1\leq q\leq Q} q^{n-\frac{n}{2}-1}&\leq \left(\sum_{\bj\in\mathcal{J}(J_1)}\prod_{r=1}^R  \frac{1}{j_r+1}\right)Q^{\frac{n}{2}}J_1^{\frac{n}{2}-1}\\
 &\leq Q^{\frac{n}{2}}J_1^{\frac{n}{2}-1}(\log 4J_1)^{R}. 
\end{align*}
\end{proof}

Finally, we estimate the contribution from the critical stationary phase regime.
\begin{lemma}
    \label{lem og station}
    \begin{align*}
    M_1(Q, \bdel)&\ll Q^{\frac{n}{2}}  \sum_{\bj\in \mathcal{J}(J_1)}
    \left(\prod_{r=1}^R \frac{1}{j_r+1}\right)\sum_{\bk\in 
    \mathbb{Z}^n}\frac{w_{\bj}^*\left(\frac{\bk}{j_1}\right)}{\sqrt{|\det\, H_{F_{\bj}}\left((\nabla F_{\bj})^{-1}(\bk/j_1)\right)|}}j_1^{-\frac{n}{2}}\min\left(\|j_1F^*_{\bj}(\bk/j_1)\|^{-1}, Q\right)\nonumber\\
    &+Q^{\frac{n}{2}}J_1^{\frac{n}{2}-1}\left(\log 4J_1\right)^R,
    \end{align*}     
with implicit constants depending only on $\cM$ and $w$.
\end{lemma}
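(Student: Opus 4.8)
The plan is to run the method of stationary phase (Lemma~\ref{lem st phase}) on each oscillatory integral $I(q,\bj,\bk)$ with $\bk\in\mathscr{K}_{\bj,1}$, and then to sum the resulting main and error terms over $q$, $\bk$ and $\bj$. First I would note that for $\bk\in\mathscr{K}_{\bj,1}$, i.e.\ $\bk/j_1\in V_{\bj}=\nabla F_{\bj}(U)$, the phase $\varphi^{\bj,\bk}(\bx)=F_{\bj}(\bx)-(\bk/j_1)\cdot\bx$ has a unique critical point $\bx_{\bj,\bk}=(\nabla F_{\bj})^{-1}(\bk/j_1)$, which lies in $U=\supp w$ because $\nabla F_{\bj}$ is a diffeomorphism on $\mathscr{D}\supseteq U$. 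The hypotheses of Lemma~\ref{lem st phase} hold uniformly in $q,\bj,\bk$: the derivative bounds on $\varphi^{\bj,\bk}$ are part~(ii) of Lemma~\ref{lem og phase amp est}, those on the amplitude $w$ are trivial, the nondegeneracy $|\det H_{\varphi^{\bj,\bk}}(\bx_{\bj,\bk})|=|\det H_{F_{\bj}}(\bx_{\bj,\bk})|\asymp 1$ follows from \eqref{eq inv deriv} and \eqref{eq curv est}, and the ratio condition $\sup_{\bx}|\bx-\bx_{\bj,\bk}|/|\nabla\varphi^{\bj,\bk}(\bx)|\ll 1$ is part~(iii) of Lemma~\ref{lem og phase amp est}.

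Applying Lemma~\ref{lem st phase} with $\lambda=qj_1$ then yields
\begin{equation*}
I(q,\bj,\bk)=(qj_1)^{-n/2}\,\frac{w_{\bj}^*(\bk/j_1)}{\sqrt{\left|\det H_{F_{\bj}}\big((\nabla F_{\bj})^{-1}(\bk/j_1)\big)\right|}}\,e\!\left(qj_1\,\varphi^{\bj,\bk}(\bx_{\bj,\bk})+\tfrac{\sigma_{\bj}}{8}\right)+O\!\left((qj_1)^{-n/2-1}\right),
\end{equation*}
where $\sigma_{\bj}$ is the signature of $H_{F_{\bj}}(\bx_{\bj,\bk})$, the implied constant depends only on $w$ and $\cM$, and I have used $w(\bx_{\bj,\bk})=w_{\bj}^*(\bk/j_1)$ from \eqref{eq def dual wj}. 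The key algebraic simplification, carried out exactly as in \eqref{eq dual dual og} using the Legendre duality \eqref{eq def dual Fj} together with $(F_{\bj})^{**}=F_{\bj}$, is that $\varphi^{\bj,\bk}(\bx_{\bj,\bk})=F_{\bj}(\bx_{\bj,\bk})-(\bk/j_1)\cdot\bx_{\bj,\bk}=-F_{\bj}^*(\bk/j_1)$; the factor $e(\sigma_{\bj}/8)$ is unimodular and disappears upon taking absolute values.

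Next I would multiply by $q^n$ and sum over $1\le q\le Q$. For the main term this produces the exponential sum $\sum_{1\le q\le Q}q^{n/2}\,e(-qj_1F_{\bj}^*(\bk/j_1))$, which by geometric summation followed by partial summation (to absorb the weight $q^{n/2}$) is $\ll Q^{n/2}\min\big(Q,\|j_1F_{\bj}^*(\bk/j_1)\|^{-1}\big)$, while the error term contributes $\ll j_1^{-n/2-1}\sum_{1\le q\le Q}q^{n/2-1}\ll j_1^{-n/2-1}Q^{n/2}$. Summing over $\bk\in\mathscr{K}_{\bj,1}$, and using $\#\mathscr{K}_{\bj,1}\ll j_1^n$ since $V_{\bj}$ lies in a fixed compact set, the main term reproduces exactly the first expression in the statement once the weights $\prod_r(1+j_r)^{-1}$ are reinstated, and the error term becomes $\ll j_1^{n/2-1}Q^{n/2}$. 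Finally, summing over $\bj\in\mathcal{J}(J_1)$ the error contributes $\ll Q^{n/2}\sum_{1\le j_1\le J_1}\tfrac{j_1^{n/2-1}}{1+j_1}\prod_{r=2}^R\big(\sum_{0\le j_r\le j_1}\tfrac{1}{1+j_r}\big)\ll Q^{n/2}J_1^{n/2-1}(\log 4J_1)^R$, which is the second term in the statement.

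The main obstacle, already resolved by the cited estimates, is guaranteeing that the implied constant in the stationary-phase expansion of $I(q,\bj,\bk)$ truly depends only on $w$ and $\cM$, and not on $q$, $\bj$ or $\bk$; this is precisely why Lemma~\ref{lem og phase amp est} is stated with $\bj$- and $\bk$-independent constants. The only genuinely new bookkeeping is the passage from the stationary-phase main term to the $\min(Q,\|\cdot\|^{-1})$-weighted count of integer points near the dual hypersurfaces $\{(\by,F_{\bj}^*(\by))\}$, which is exactly the elementary summation in $q$ described above.
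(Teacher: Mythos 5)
Your proposal is correct and follows essentially the same route as the paper: a uniform application of the stationary phase lemma with $\lambda=qj_1$, the identities $w(\bx_{\bj,\bk})=w_{\bj}^*(\bk/j_1)$ and $\varphi^{\bj,\bk}(\bx_{\bj,\bk})=-F_{\bj}^*(\bk/j_1)$ via Legendre duality, geometric plus partial summation in $q$ to produce the $\min\bigl(Q,\|j_1F_{\bj}^*(\bk/j_1)\|^{-1}\bigr)$ factor, and the count $\#\mathscr{K}_{\bj,1}\ll j_1^n$ to control the stationary-phase error term. No gaps.
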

\begin{proof}
We intend to apply the stationary phase principle to evaluate the integrals $I(q, \bk, \bj)$ for $\bk\in j_1V_{\bj}$. As in the proof of Lemma \ref{lem og interm}, we note that the phase $\varphi^{\bj, \bk}$ satisfies the estimates $\eqref{eq est og phi deriv}$. We also recall \eqref{eq og hess est}. 
Further, the condition \eqref{eq curv est} implies that the signature of $H_{\varphi^{\bj, \bk}}\left(x_{\bj, \bk}\right)$ is the same for all relevant values of $\bfj$ and $k$.
Let $\sigma$ denote this signature.
An application of the stationary phase principle (Lemma \ref{lem st phase}) with $\lambda=dj_1$, phase $\varphi^{\bj, \bk}$ and amplitude function $w$, gives
\begin{equation*}
    I(q, \bj, \bk)=\frac{w(\bx_{\bj, \bk})}{\sqrt{|\det\, H_{F_{\bj}}\left((\nabla F_{\bj})^{-1}(\bk/j_1)\right)|}}\left(qj_1\right)^{-\frac{n}{2}}e\left(-qj_1 \varphi^{\bj, \bk}(\bx_{\bj, \bk})+\sigma/8\right)+O((qj_1)^{-\frac{n}{2}-1}).
\end{equation*}
For $\bk\in \mathscr{K}_{\bj, 1}$, we have
\begin{equation}
    \label{eq og wt est}
    {w(\bx_{\bj, \bk})}=w\left((\nabla F_{\bj})^{-1}(\bk/j_1)\right)=w^*_{\bj}\left(\frac{\bk}{j_1}\right).
\end{equation}
We can also simplify
\begin{equation}
    \label{eq og og dual}
    \varphi^{\bj, \bk}(\bx_{\bj, \bk})=\left(F_{\bj}\circ\left(\nabla F_{\bj}\right)^{-1}\right)\left(\frac{\bk}{j_1}\right)-\frac{\bk}{j_1}\cdot \left(\nabla F_{\bj}\right)^{-1}\left(\frac{\bk}{j_1}\right)=-F^{*}_{\bj}\left(\frac{\bk}{j_1}\right).
\end{equation}
Plugging \eqref{eq og wt est} and \eqref{eq og og dual} into the stationary phase expansion for $I(q, \bj, \bk)$, we get
\begin{align}
\label{eq I expsum}
    I(q, \bj, \bk)=\frac{w^*\left(\frac{\bk}{j_1}\right)}{\sqrt{|\det\, H_{F_{\bj}}\left((\nabla F_{\bj})^{-1}(\bk/j_1)\right)|}}\left(qj_1\right)^{-\frac{n}{2}}e\left(-qj_1 F^{*}_{\bj}\left(\frac{\bk}{j_1}\right)+\frac{\sigma}{8}\right)+O((qj_1)^{-\frac{n}{2}-1}).
\end{align}
By geometric and partial summation, we have
\begin{align*}
    \sum_{q=1}^{Q} q^n I(q, \bj, \bk)\ll Q^{\frac{n}{2}}\frac{w^*\left(\frac{\bk}{j_1}\right)}{\sqrt{|\det\, H_{F_{\bj}}\left((\nabla F_{\bj})^{-1}(\bk/j_1)\right)|}}j_1^{-\frac{n}{2}}\min \left(\|j_1F^*_{\bj}(\bk/j_1)\|^{-1}, Q\right)+Q^{\frac{n}{2}}j_1^{-\frac{n}{2}-1}.
\end{align*}
Therefore
\begin{align}
    \label{lem og st pre sec term}
    M_1(Q, \bdel)&\ll Q^{\frac{n}{2}}
    \sum_{\bj\in \mathcal{J}(J_1)}\left(\prod_{r=1}^R \frac{1}{j_r+1}\right)\sum_{\bk\in \mathbb{Z}^n}\frac{w_{\bj}^*\left(\frac{\bk}{j_1}\right)}{\sqrt{|\det\, H_{F_{\bj}}\left((\nabla F_{\bj})^{-1}(\bk/j_1)\right)|}}j_1^{-\frac{n}{2}}\min\left(\|j_1F^*_{\bj}(\bk/j_1)\|^{-1}, Q\right)\nonumber\\
    &+
    \sum_{\bj\in\mathcal{J}(J_1)}\left(\prod_{r=1}^R \frac{1}{j_r+1}\right)\sum_{\bk\in \mathscr{K}_{\bj, 1}}Q^{\frac{n}{2}}j_1^{-\frac{n}{2}-1}.
\end{align}
Arguing as in the proof of Lemma \ref{lem og interm}, we can deduce that $\#\mathscr{K}_{\bj, 1}\ll j_1^{n}.$ Thus the second term in \eqref{lem og st pre sec term} can be estimated as below
\begin{align*}
    \sum_{\bj\in\mathcal{J}(J_1)}\left(\prod_{r=1}^R \frac{1}{j_r+1}\right)\sum_{\bk\in \mathscr{K}_{\bj, 1}}Q^{\frac{n}{2}}j_1^{-\frac{n}{2}-1}\ll Q^{\frac{n}{2}}
    \sum_{j_1=1}^{J_1}\sum_{j_2, \ldots, j_R=0}^{j_1}\left(\prod_{r=1}^R \frac{1}{j_r+1}\right) j_1^{\frac{n}{2}-1}\leq Q^{\frac{n}{2}}J_1^{\frac{n}{2}-1}(\log 4J_1)^R.
\end{align*}
This establishes Lemma \ref{lem og station}.
\end{proof}
\subsubsection*{Concluding the proof of Proposition \ref{prop og vdc b}} The required estimate follows by combining \eqref{eq ogN diff regimes} with Lemmas \ref{lem og nstation}-\ref{lem og station}.

\section{Proof of Theorem \ref{thm homog main}}
\label{sec thm homog main}
Let
\begin{equation}
    \label{def N0}
    \mathfrak{N}_{0}(Q):=\sum_{\substack{\ba\in \mathbb{Z}^n\\1\leq q\leq Q}}w\left(\frac{\ba}{q}\right).
\end{equation}
Applying the Poisson summation formula and using the rapid decay of $\hat{w}$, we get
\begin{equation}
    \label{eq No expr}
    \mathfrak{N}_0=\sum_{1\leq q\leq Q} q^{n}\sum_{\bk\in \mathbb{Z}^n}\hat{w}(q \bk) = \frac{\hat{w}(\bzero)}{n+1}Q^{n+1}+ O(Q^{n}).
\end{equation}

Next, we use an $R$-fold product of the Selberg magic functions of degree $X$ to estimate the characteristic function of the set $(-\delta, \delta)$. Let
$$b_{j_r}:=\frac{1}{X+1}+\min \left\{2\delta, \frac{1}{j_r}\right\},$$
and
\begin{equation}
    \label{def Er}
    E_{s, \bgam} (Q, X):=\sum_{\substack{\bj\in \mathbb{Z}^R_{\geq 0}:\\\|\bj\|_{\infty}=j_s\in [1, X]}} \left(\prod_{r=1}^R b_{j_r}\right)\left|\sum_{\substack{\ba\in \mathbb{Z}^n\\1\leq q\leq Q}} w\left(\frac{\ba}{q}\right)\exp \left(\sum_{r=1}^R\gamma_rj_r q f_r \aq\right)\right|.
\end{equation}
Separating the term corresponding to $j_1=\ldots=j_R=0$ and using \eqref{eq selb0}, we can write
\begin{align}
    \label{eq Selb exp}
    |\Nwmh-2 \delta^R \mathfrak{N}_0| &\ll \delta^{R-1}\frac{1}{X}Q^{n+1}+\frac{1}{X^R} Q^{n+1} +\sum_{s=1}^R \sum_{\bgam\in \{-1, 1\}^R}E_{s, \bgam} (Q, X), 
\end{align}

For ease of exposition, let us focus on the case when $s=1$ and $\bgam=(1, 1, \ldots, 1)\in \mathbb{Z}^R$; the other cases can be dealt with using the same argument. We shall suppress notation and refer to $E_{1, (1, \ldots, 1)}(Q, X)$ simply as $E(Q, X)$. Arguing the same way as in the proof of Proposition \ref{prop og vdc b}, we can conclude that
\begin{align}
        E(Q, X)&\ll Q^{\frac{n}{2}}\sum_{\substack{1\leq j_1\leq X\\ 0\leq j_{r'}\leq j_1\\ 2\leq r'\leq R}}\left(\prod_{r=1}^R b_{j_r}\right)\sum_{\bk\in \mathbb{Z}^n}\frac{w_{\bj}^*\kj}{\sqrt{|\det\, H_{F_{\bj}}\left((\nabla F_{\bj})^{-1}(\bk/j_1)\right)|}}j_1^{-\frac{n}{2}}\min\left(\|j_1F^*_{\bj}(\bk/j_1)\|^{-1}, Q\right)\nonumber\\
       &+X^{\frac{n}{2}-1}Q^{\frac{n}{2}}\left(\log 4X\right)^R.
        \label{eq EQX}
    \end{align}
with the implicit constant depending only on $w$ and $\mathcal{M}$. We recall that the function $F_{\bj}$ is given by
$$F_{\bj}(\bx):=f_1(\bx)+\sum_{r=2}^R \frac{j_r}{j_1}f_r(\bx).$$
We will need two preliminary lemmas. The first one is a consequence of Theorem \ref{thm main}. 
   
\begin{lemma}
\label{lem 1 lower bd}
Let $\alpha_{\textrm{st}}$ be as defined in \eqref{def alpha stop}. There exists a positive constant $C_3$ depending only on $w$ and $\cM$ such that for all $Q^*\geq 1$ and for all $\delta^*\in (0, 1/2)$, we have 
\begin{equation}
    \label{eq thm1lem1 concl}
    \sum_{\substack{1\leq j_1\leq Q^*\\ 0\leq j_{r'}\leq j_1\\ 2\leq r'\leq R}}\sum_{\substack{\bk\in \mathbb{Z}^n\\\|j_1 F^*_{\bj}(\bk/j_1)\|<\delta^{*}}}\frac{w_{\bj}^*\kj}{\sqrt{|\det\, H_{F_{\bj}}\left((\nabla F_{\bj})^{-1}(\bk/j_1)\right)|}} \leq C_3\delta^* (Q^*)^{n+R} +C_3\left(Q^*\right)^{\alpha_{\textrm{st}}}\mathcal{E}_n\left(Q^*\right),
\end{equation}
where $\mathcal{E}_n(Q)$ is as defined in \eqref{eq aux error}.
\end{lemma}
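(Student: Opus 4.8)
The starting point is the remark that the double sum on the left of \eqref{eq thm1lem1 concl} is exactly $\mathfrak{N}^{*,1}_{w,\cM}(Q^*,\delta^*)$ as defined in \eqref{eq def dual weight Hessian}: the range $1\le j_1\le Q^*$, $0\le j_{r'}\le j_1$ for $2\le r'\le R$ is precisely the pencil set $\mathcal{J}^1(Q^*)$ of \eqref{eq J index set}, and the summand coincides with the one in \eqref{eq def dual weight Hessian}. Thus the lemma asserts $\mathfrak{N}^{*,1}_{w,\cM}(Q^*,\delta^*)\ll \delta^*(Q^*)^{n+R}+(Q^*)^{\alpha_{\textrm{st}}}\mathcal{E}_n(Q^*)$, i.e.\ the conclusion of Proposition \ref{prop dual from og} but with the improved exponent $\alpha_{\textrm{st}}$ from \eqref{def alpha stop}. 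The plan is to obtain this by running the proof of Proposition \ref{prop dual from og} (Section \ref{sec dual from og}) verbatim, but feeding in the bound for $\mathfrak{N}_{w,\cM}(Q,\bdel)$ supplied by Theorem \ref{thm main} in place of the induction hypothesis \eqref{eq p2 hypo}.

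Concretely, since $\Theta\le\beta_{\textrm{st}}$ (the observation following \eqref{def beta stop}), Theorem \ref{thm main} gives, for all $Q\ge1$ and $\bdel\in(0,1/2)^R$,
\[
\mathfrak{N}_{w,\cM}(Q,\bdel)\ll \delp Q^{n+1}+\sum_{r=1}^R\delpr Q^{\frac{(n+1)(R-1)+\beta_{\textrm{st}}}{R}}\Tilde{\mathcal{E}}_n(Q)^{1/R}+Q^{\beta_{\textrm{st}}}\mathcal{E}_n(Q),
\]
which is precisely hypothesis \eqref{eq p2 hypo} with $\beta=\beta_{\textrm{st}}$, save that the trailing factor $(\log 4Q)^{a_1}$ is upgraded to $\mathcal{E}_n(Q)$ and a factor $\Tilde{\mathcal{E}}_n(Q)^{1/R}$ rides on the middle term. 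One then repeats, step for step, the chain of estimates in Section \ref{sec dual from og}: apply Proposition \ref{prop dual vdc b}, dyadically decompose the sum over $\bk$ as in \eqref{eq p2 1}, insert the displayed bound with $D=\lfloor 1/(2\delta^*)\rfloor$ in place of $Q$, carry out the partial summation in $d$ and the dyadic sum over $\mathbf{i}$, and finally split into the cases $\delta^*\ge(Q^*)^{-n/(2\beta_{\textrm{st}}-n)}$ and $\delta^*<(Q^*)^{-n/(2\beta_{\textrm{st}}-n)}$. The sole difference is that the factor $(\log 4/\delta^*)^{a_1}$ appearing in \eqref{eq p2 bf ind} is now $\mathcal{E}_n(1/\delta^*)$ on the term carrying $Q^{\beta_{\textrm{st}}}$ and $\Tilde{\mathcal{E}}_n(1/\delta^*)^{1/R}$ on the term carrying the exponent $\frac{(n+1)(R-1)+\beta_{\textrm{st}}}{R}$. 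All the purely power-type inequalities invoked there --- \eqref{eq est beta term}, \eqref{eq p2 est}, \eqref{eq prob est}, \eqref{eq p2 betaR term} --- only require $\beta_{\textrm{st}}\ge\frac{n(n+R+1)}{n+2R}$ and $\beta_{\textrm{st}}\ge\frac{n(n+1)}{n+2}$, both immediate from \eqref{def beta stop}, so the power of $Q^*$ produced is $\max\bigl(n+R-\tfrac{n}{2\beta_{\textrm{st}}-n},\,n+R-1-\tfrac2n\bigr)$; a short computation, splitting $R\in\{1,2\}$ (where $\beta_{\textrm{st}}=\frac{n(n+R+1)}{n+2R}$, whence $n+R-\tfrac{n}{2\beta_{\textrm{st}}-n}=\frac{n(n+R+1)}{n+2}$) from $R\ge3$ (where $\beta_{\textrm{st}}=\frac{n(n+1)}{n+2}$, whence $n+R-\tfrac{n}{2\beta_{\textrm{st}}-n}=n+R-1-\tfrac2n$), identifies this with $\alpha_{\textrm{st}}$.

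It remains to absorb the error factor. In the main regime $\delta^*\ge(Q^*)^{-n/(2\beta_{\textrm{st}}-n)}$ one has $1/\delta^*\le(Q^*)^{c}$ with $c=n/(2\beta_{\textrm{st}}-n)$ a fixed constant, so the accumulated factor --- a bounded product of $\mathcal{E}_n(1/\delta^*)$, $\Tilde{\mathcal{E}}_n(1/\delta^*)^{1/R}$ and $(\log 4Q^*)^{O_R(1)}$ --- is $\ll\mathcal{E}_n(Q^*)$ once one notes that for each of the three shapes in \eqref{eq aux error}--\eqref{eq aux error2} one has $\mathcal{E}_n((Q^*)^{c})\ll_c\mathcal{E}_n(Q^*)$ after enlarging the constants $\mathfrak{c}_1,\mathfrak{c}_2$, and that $\mathcal{E}_n(Q^*)$ dominates any fixed power of $\log 4Q^*$; the complementary regime is disposed of exactly as in Section \ref{sec dual from og} by monotonicity of $\delta^*\mapsto\mathfrak{N}^{*,1}_{w,\cM}(Q^*,\delta^*)$. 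I expect no genuine obstacle here --- all the analytic machinery (Propositions \ref{prop dual vdc b} and \ref{prop dual from og}) is already in place, and only the bookkeeping of the lower-order factors changes. The one point that does require care is that the exponent of Theorem \ref{thm main} must first be relaxed from $\Theta$ to $\beta_{\textrm{st}}$ before being fed in: it is $\beta_{\textrm{st}}$, and not $\Theta$, that makes the stationary-phase remainder term $(Q^*)^{n/2+R-1}(\delta^*)^{1-n/2}$ close up against $(Q^*)^{\alpha_{\textrm{st}}}$ in the range of $\delta^*$ at hand.
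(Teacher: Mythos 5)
Your proposal is correct and follows essentially the same route as the paper: the paper's own proof of this lemma simply says to rerun the argument of Proposition \ref{prop dual from og} with the conclusion of Theorem \ref{thm main} in place of the induction hypothesis \eqref{eq p2 hypo}. Your one refinement is genuinely worthwhile: for $R\geq 3$ one has $\Theta<\beta_{\textrm{st}}$, so feeding the exponent $\Theta$ directly into the machinery would place $\beta$ outside the range where \eqref{eq prob est} controls the stationary-phase remainder $(Q^*)^{\frac{n}{2}+R-1}(\delta^*)^{1-\frac{n}{2}}$; relaxing $\Theta$ to $\beta_{\textrm{st}}$ first, as you do, is the correct implementation and still lands on $\alpha_{\textrm{st}}$ since $\max\bigl(n+R-\tfrac{n}{2\beta_{\textrm{st}}-n},\,n+R-1-\tfrac{2}{n}\bigr)=\alpha_{\textrm{st}}$ in both regimes of $R$. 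Your bookkeeping of the $\mathcal{E}_n$ and $\Tilde{\mathcal{E}}_n$ factors (via $\mathcal{E}_n((Q^*)^c)\ll\mathcal{E}_n(Q^*)$ after enlarging $\mathfrak{c}_1,\mathfrak{c}_2$) is also sound and fills in a step the paper leaves implicit.
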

\begin{proof}
The proof proceeds in exactly the same way as that of
Proposition \ref{prop dual from og}, except that we use the conclusion of Theorem \ref{thm main} instead of \eqref{eq p2 hypo} as our input. Indeed, by Theorem \ref{thm main}, we know that for all $Q\geq 1$ and for all $\bdel\in (0, 1/2)^R$, 
\begin{equation*}
   \mathfrak{N}_{w, \cM}(Q, \delta)\ll 
   \mathcal{E}_n(Q)\left(\delta^R Q^{n+1}+ \sum_{r=1}^R \delta^{R-1} Q^{n+1+\frac{\Theta-(n+1)}{R}}+Q^{\Theta}\right),
\end{equation*}
with $\Theta$ is as defined in \eqref{def theta}.
Then by following the proof of Proposition \ref{prop dual from og}, we can conclude that there exists a positive constant $C_3$ depending only on $w$ and $\cM$, such that for all $Q^*\geq 1$ and for all $\delta^*\in (0, 1/2)$, we have 
\begin{equation*}
    \DNwm \leq C_3\delta^* (Q^*)^{n+R} +C_3\left(Q^*\right)^{\alpha_{\textrm{st}}}\mathcal{E}_n\left(Q^*\right),
\end{equation*}
with 
\begin{equation*}
    \alpha_{\textrm{st}}=n+R-\frac{n }{2 \Theta- n}= \max\left(\frac{n(n+R+1)}{n+2}, n+R-1-\frac{2}{n}\right).
\end{equation*}
Here $\DNwm$ is as defined in \eqref{eq def dual weight Hessian} and is exactly the left hand side of \eqref{eq thm1lem1 concl}. Thus we are done.
\end{proof}

The second lemma takes the conclusion of the previous one as input to derive an estimate for $E(Q, X)$, using \eqref{eq EQX}. The proof is exactly the same as the first half of the proof of Proposition \ref{prop og from dual}, using Proposition \ref{prop og vdc b}.
\begin{lemma}
\label{lem 2 low bd}
There exists a positive constant $C_4$ depending only on $w$ and $\cM$ such that for all $Q\geq 1$ and for all $X\in (2, \infty)$, we have 
\begin{equation*}
    E(Q, X)\leq C_4 \mathcal{E}_n(X)\left(\log 4X\right)^{R}\left( (\log 4Q)Q^{\frac{n}{2}}X^{\frac{n}{2}}+Q^{\frac{n}{2}+1}X^{\alst-\frac{n}{2}-R}\right).
\end{equation*}
\end{lemma}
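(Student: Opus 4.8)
The plan is to run the first half of the proof of Proposition \ref{prop og from dual} essentially verbatim, feeding in Lemma \ref{lem 1 lower bd} wherever that proof used the induction hypothesis \eqref{eq p3 hypo}. I would start from the bound \eqref{eq EQX} for $E(Q,X)$, which has exactly the shape of \eqref{eq p5 in p3} with $J_1$ replaced by $X$, the weights $\tfrac{1}{j_r+1}$ replaced by $b_{j_r}$, and the term $\delp Q^{n+1}$ replaced by the harmless $X^{n/2-1}Q^{n/2}(\log 4X)^R$. The sum over $\bj=(j_1,\dots,j_R)$ with $1\le j_1\le X$ and $0\le j_{r'}\le j_1$ is precisely a sum over the pencil set $\mathcal{J}^1(X)$, and one notes that $b_{j_r}=\tfrac{1}{X+1}+\min(2\delta,1/j_r)\ll \tfrac{1}{j_r+1}$ uniformly since $j_r\le X$, so the weights behave exactly like those in Section \ref{sec og from dual}.

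Next, exactly as in \eqref{eq p3 1}, I would decompose the inner sum over $\bk$ dyadically according to the size of $\|j_1F^*_{\bj}(\bk/j_1)\|$ relative to $Q$: the piece with $\|j_1F^*_{\bj}(\bk/j_1)\|<Q^{-1}$ contributes a factor $Q$, and for $0\le i\le \log 4Q/\log 2$ the shell $\tfrac{2^i}{Q}<\|j_1F^*_{\bj}(\bk/j_1)\|\le \tfrac{2^{i+1}}{Q}$ contributes a factor $2^{-i}Q$. For each fixed $i$, summing over $\bj\in\mathcal{J}^1(X)$ and over $\bk$ with $\|j_1F^*_{\bj}(\bk/j_1)\|\le 2^{i+1}/Q$ produces the quantity ${\mathfrak{N}}^{*,1}_{w,\cM}(X,2^{i+1}/Q)$, to which Lemma \ref{lem 1 lower bd} applies and gives a bound $\ll \tfrac{2^{i}}{Q}X^{n+R}+X^{\alst}\mathcal{E}_n(X)$ (the edge cases where $2^i/Q\ge \tfrac12$ are dealt with by monotonicity in $\delta^*$, just as in \eqref{eq ndual est}). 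Then, exactly as in the computation following \eqref{eq ndual est}, I would carry out partial summation in $j_2,\dots,j_R$ grouped into dyadic blocks, together with partial summation in $j_1$ to absorb the factor $j_1^{-n/2}$ and the constraint $j_r\le j_1\le X$; this inserts a factor $(\log 4X)^R X^{-n/2}$ in front of $\big(\tfrac{2^{i+1}}{Q}X^n+X^{\alst-R}\mathcal{E}_n(X)\big)$. Summing over $i$ (geometric in the second term, $O(\log 4Q)$ terms in the first) and multiplying by the prefactor $Q^{n/2+1}$ yields the analogue of the bound obtained in Section \ref{sec og from dual} just before the substitution $J_1=\lfloor 1/(2\delta_1)\rfloor$, namely
\[
E(Q,X)\ll \mathcal{E}_n(X)^2\,(\log 4X)^R\Big((\log 4Q)Q^{n/2}X^{n/2}+Q^{n/2+1}X^{\alst-n/2-R}\Big),
\]
with the leftover $X^{n/2-1}Q^{n/2}(\log 4X)^R$ from \eqref{eq EQX} absorbed into the first summand.

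To finish, I would observe that each of the three functions in \eqref{eq aux error} satisfies $\mathcal{E}_n(X)\ge 1$, is slowly varying, and obeys $\mathcal{E}_n(X)^2\ll\mathcal{E}_n(X)$ after enlarging the absolute constants $\mathfrak{c}_1,\mathfrak{c}_2$ (since $\exp(2\mathfrak{c}_1\sqrt{\log 4X})$, $(\log 4X)^{2\mathfrak{c}_2}$ and $\exp(2\mathfrak{c}_2(\log\log 4X)^2)$ are again of the respective forms with doubled constants, and the statement of Theorem \ref{thm homog main} explicitly allows $\mathfrak{c}_1,\mathfrak{c}_2$ to be calculated from the proof). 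This collapses $\mathcal{E}_n(X)^2(\log 4X)^R$ to $C_4\,\mathcal{E}_n(X)(\log 4X)^R$ and gives the claimed inequality with $C_4=C_4(w,\cM)$.

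The argument introduces no new ideas beyond those of Section \ref{sec og from dual}; the only step needing genuine (if routine) care is the bookkeeping of the logarithmic and $\mathcal{E}_n$ factors — checking that the exponent of $\mathcal{E}_n(X)$ accumulated through the dyadic summation never exceeds what a single $\mathcal{E}_n(X)^2\ll\mathcal{E}_n(X)$ absorption can handle, and that it is precisely $\alst$ (not a generic $\alpha$) that Lemma \ref{lem 1 lower bd} outputs, so that $X^{\alst-n/2-R}$ is the exponent appearing in the final bound. I expect this bookkeeping, rather than any analytic difficulty, to be the main thing to get right.
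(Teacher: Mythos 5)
Your proposal is correct and follows essentially the same route as the paper: the paper's proof of Lemma \ref{lem 2 low bd} likewise starts from \eqref{eq EQX}, performs the dyadic decomposition of \eqref{eq p3 1} in $\|j_1F^*_{\bj}(\bk/j_1)\|$, feeds each shell into Lemma \ref{lem 1 lower bd}, does partial summation in the $j_r$ variables, and sums the dyadic scales. The only cosmetic difference is that the paper keeps a single factor of $\mathcal{E}_n(X)$ throughout (since Lemma \ref{lem 1 lower bd} contributes it only on the $X^{\alst}$ term), whereas you allow an $\mathcal{E}_n(X)^2$ and then absorb it by enlarging $\mathfrak{c}_1,\mathfrak{c}_2$ — a harmless over-estimate consistent with the paper's conventions.
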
    
\begin{proof}
    We proceed as in the proof of Proposition \ref{prop og from dual}. We make a dyadic decomposition as in \eqref{eq p3 1} based on the size of $\|j_1F^*_{\bj}(\bk/j_1)\|$ and then use Lemma \ref{lem 1 lower bd} and partial summation on the $j_r$ variables, to conclude that for each $0\leq i\leq \frac{\log 4Q}{\log 2}$, we have
    \begin{align*}
   &\sum_{\substack{1\leq j_1\leq X\\ 0\leq j_r\leq j_1\\ 2\leq r\leq R}}\left(\prod_{r=1}^R b_{j_r}\right)\sum_{\substack{\bk\in \mathbb{Z}^n\\\|j_1F^*_{\bj}(\bk/j_1)\|\leq \frac{2^{i+1}}{Q}}}\frac{w_{\bj}^*\kj j_1^{-\frac{n}{2}}}{\sqrt{|\det\, H_{F_{\bj}}\left((\nabla F_{\bj})^{-1}(\bk/j_1)\right)|}}\\
   &\ll \mathcal{E}_n(X)(\log 4X)^{R}X^{-\frac{n}{2}}\left(2^{i+1}Q^{-1} X^{n} +X^{\alst-R}\right).
\end{align*}
Using \eqref{eq EQX} and summing up in the dyadic scales $i$, we get
\begin{align*}
    E(Q, X)&\ll Q^{\frac{n}{2}}\sum_{\substack{1\leq j_1\leq J_1\\ 0\leq j_r\leq X\\ 2\leq r\leq R}}\left(\prod_{r=1}^R b_{j_r}\right)\sum_{\bk\in \mathbb{Z}^n}\frac{w_{\bj}^*\kj}{\sqrt{|\det\, H_{F_{\bj}}\left((\nabla F_{\bj})^{-1}(\bk/j_1)\right)|}}j_1^{-\frac{n}{2}}\min\left(\|j_1F^*_{\bj}(\bk/j_1)\|^{-1}, Q\right)
    \\&+X^{\frac{n}{2}-1}Q^{\frac{n}{2}}\\&\ll \mathcal{E}_n(X)Q^{\frac{n}{2}+1} (\log 4X)^{R}X^{-\frac{n}{2}}\sum_{0\leq i\leq \frac{\log 4Q}{\log 2}}2^{-i}\left(2^{i+1}Q^{-1} X^{n} +X^{\alst-R}\right)\\
    &\ll \mathcal{E}_n(X)(\log 4X)^{R} \left((\log 4Q)Q^{\frac{n}{2}}X^{\frac{n}{2}}+Q^{\frac{n}{2}+1}X^{\alst-\frac{n}{2}-R}\right).
\end{align*}
\end{proof}
We now return to the proof of Theorem \ref{thm homog main}. The estimate in Lemma \ref{lem 2 low bd} for each $E_{r, \bgam}(X, Q)$, independently of $Q$ and $X$. Therefore, \eqref{eq Selb exp} yields
\begin{align*}
    &|\Nwmh-2 \delta^R \mathfrak{N}_0| \\&\ll \delta^{R-1}\frac{1}{X}Q^{n+1}+\frac{1}{X^R} Q^{n+1} +\mathcal{E}_n(X)\left(\log 4X\right)^{R}\left( (\log 4Q)Q^{\frac{n}{2}}X^{\frac{n}{2}}+Q^{\frac{n}{2}+1}X^{\alst-\frac{n}{2}-R}\right).
\end{align*}
Plugging in \eqref{eq No expr}, we get
\begin{align*}
    &\left|\Nwmh- \frac{2\hat{w}(\bzero)}{n+1}\delta^R Q^{n+1}\right| \\&\ll \delta^{R-1}\frac{1}{X}Q^{n+1}+\frac{1}{X^R} Q^{n+1} +\mathcal{E}_n(X)\left(\log 4X\right)^{R}\left( (\log 4Q)Q^{\frac{n}{2}}X^{\frac{n}{2}}+Q^{\frac{n}{2}+1}X^{\alst-\frac{n}{2}-R}\right)+O(Q^{n}).
\end{align*}
We now choose our parameter $X$ and set it to be $Q^{\frac{n+2}{n+2R}}$. This yields
\begin{align*}
    &\left|\Nwmh- \frac{2\hat{w}(\bzero)}{n+1}\delta^R Q^{n+1}\right| \\&\ll \delta^{R-1}Q^{n+1-\frac{n+2}{n+2R}}+Q^{n+1-\frac{(n+2)R}{n+2R}} +\mathcal{E}_n(Q)\left(\log 4Q\right)^{R}\left( (\log 4Q)Q^{\frac{n(n+R+1)}{n+2R}}+Q^{\frac{n}{2}+1}Q^{\left(\frac{n+2}{n+2R}\right)\left(\frac{n(n+R+1)}{n+2}-\frac{n}{2}-R\right)}\right)\\&+O(Q^{n})\\
    &\ll \delta^{R-1}Q^{n+1-\frac{n+2}{n+2R}}+\mathcal{E}_n(Q) Q^{\frac{n(n+R+1)}{n+2R}}
    =\delta^{R-1}Q^{\frac{(n+1)(R-1)}{R}+\frac{n(n+R+1)}{R(n+2R)}}+\mathcal{E}_n(Q) Q^{\frac{n(n+R+1)}{n+2R}},
\end{align*}
with the constants $\mathfrak{c}_2$ (or $\mathfrak{c}_1$) in the definition of $\mathcal{E}_n(Q)$ chosen large enough. 
This establishes \eqref{eq homog main est}, and hence Theorem \ref{thm homog main}.

\section{Proof of Theorem \ref{thm hausd well app}}
\label{sec thm dio approx}
Since $\psi$ is monotonic, by a slight generalisation of Cauchy's condensation test, \eqref{eq Hmeas gset conv} being convergent is equivalent to 
\begin{equation}\label{eq dyad Hmeas}
 \sum_{i=1}^\infty \
 2^{i(n+1)}\left(\frac{\psi_0(2^i)}{2^i}\right)^{s}\prod_{r=1}^R\psi_r(2^i)<\infty\,.
\end{equation}
Let $\bpsi:=\left(\psi_0, \psi_1, \ldots, \psi_r\right)$ and $\mathcal{P}_{\cM}(\bpsi)$ denote the projection of $\cM\cap \mathcal{S}_{n+R}(\bpsi)$ onto $\mathcal{D}\subseteq \mathbb{R}^n$. Since the functions $f_1, \ldots, f_R$ are smooth, this projection is bi-Lipschitz. Consequently,
$$\mathcal{H}^s\left(\cM\cap \mathcal{S}_{n+R}(\boldsymbol{\psi})\right)=0 \iff \mathcal{H}^s\left(\mathcal{P}_{\cM}(\bpsi)\right)=0,$$
and it suffices to show the latter equality.
As $\mathcal{H}^s\left(\mathcal{P}_{\cM}(\bpsi)\right)=0$ for $s>n$, we may assume without loss of generality that $s\leq n$. To be able to use Theorem \ref{thm homog main}, by replacing $\psi_r(q)$ and with $$\max\left(\bpsi_r(q), q^{-\frac{n+2}{n+2R}+\eta}, q^{-\frac{n}{n+2(R-1)-\frac{4}{n}}+\eta}\right),$$ if need be, we may also assume that 
\begin{equation}
    \label{eq assumppsi dio}
    \bpsi_r(q)\geq \max\left(q^{-\frac{n+2}{n+2R}+\eta}, q^{-\frac{n}{n+2(R-1)-\frac{4}{n}}+\eta}\right), \qquad \text{ for all } q\in \mathbb{Z}_{>0} \text{ and } 0\leq r\leq R.
\end{equation}

For $(q, \boldsymbol{p})=(q, \ba, \bob)\in \mathbb{Z}_{>0}\times\mathbb{Z}^{n}\times \mathbb{Z}^{R}$, we define
\begin{equation}
    \sigma(\bop/q):=\left\{\bx\in \mathcal{P}_{\cM}(\bpsi):\left\|\bx-\frac{\ba}{q}\right\|_2\leq \frac{\psi_0(q)}{q}, \left|f_r(\bx)-\frac{b_r}{q}\right|\leq \frac{\psi_r(q)}{q} \text{ for } 1\leq r\leq R\right\}.
\end{equation}
Then $\sigma(\bop/q)$ is contained in a set of radius $\frac{\psi_0(q)}{q}$. Moreover, if $\sigma(\bop/q)\neq \emptyset$, then by the triangle inequality and using the Lipschitz property of $f_r$, we can conclude that
$$\left|f_r\left(\frac{\ba}{q}\right)-\frac{b_r}{q}\right|\ll \frac{\psi_0(q)}{q}+\frac{\psi_r(q)}{q}\ll \frac{\psi_r(q)}{q},$$
with the implicit constant depending only on $f_r$. For each $i\in \mathbb{Z}_{\geq 0}$, we define
\begin{equation}
    \label{def Bi}
    B_i:=\{(q, \bop)\in \mathbb{Z}_{\geq 0}\times \mathbb{Z}^{n+R}: 2^i\leq q< 2^{i+1}, \sigma(\bop/q)\neq \emptyset\}.
\end{equation}
Then we have
\begin{align*}
    \#B_i
    &\leq \#\{\bop/q=(\ba/q, \bob/q)\in \mathbb{Q}^{n+R}: 1\leq q < 2^{i+1}, \ba/q\in \mathscr{D}, |f_r(\ba/q)-b_r/q|\ll \psi_r(q)/q \text{ for } 1\leq r\leq R\}\\
    &\leq \#\{\ba/q\in \mathbb{Q}^{n}: 1\leq q\leq 2^i, \ba/q\in \mathscr{D}, \|f_r(\ba/q)\|\ll \psi_r(q)/q \text{ for } 1\leq r\leq R\}\\
    &\ll \mathfrak{N}_{w, \cM}\left(2^{i+1}, \psi_1(2^i), \ldots, \psi_R(2^i)\right)\ll 2^{i(n+1)}\prod_{r=1}^R\psi_r(2^i) 
\end{align*}
where we used \eqref{eq assumppsi dio} and Corollary \ref{cor hetero} to obtain the last estimate. 
Defining
$$\Sigma_i:=\bigcup_{\bop/q\in B_i} \sigma (\bop/q), $$
we observe that
\begin{equation*}
    \mathcal{P}_{\cM}(\bpsi)=\bigcap_{T\geq 0} \bigcup_{i\geq T} \Sigma_{i}.
\end{equation*}
Now, 
\begin{align*}
    \mathcal{H}^s\left(\bigcup_{i\geq T} \Sigma_i \right)\leq \sum_{i\geq T}\sum_{(q, \bop)\in B_i} \left(\frac{\psi_0(q)}{q}\right)^{s}\ll \sum_{i\geq T} 2^{i(n+1)} \left(\frac{\psi_0(2^i)}{2^i}\right)^{s}\prod_{r=1}^R\psi_r(2^i),
\end{align*}
which converges to $0$ as $T\rightarrow \infty$, due to \eqref{eq dyad Hmeas}. Therefore, we conclude that
$$\mathcal{H}^s\left(\mathcal{P}_{\cM}(\psi)\right)=0.$$

\bibliography{ref}
\end{document}